\documentclass[11pt]{amsart}

\usepackage{color, amsmath,amssymb, amsfonts, amstext,amsthm, latexsym}

   \usepackage{latexsym, oldlfont}

   \newtheorem{lemma}{Lemma}[section]
   \newtheorem{theorem}[lemma]{Theorem}
   \newtheorem{remark}[lemma]{Remark}
   \newtheorem{prop}[lemma]{Proposition}
   \newtheorem{coro}[lemma]{Corollary}

\newcommand{\R}{{\mathbb R}}

\textheight=220mm
\textwidth=145mm

\newcommand{\beq}{\begin{equation}}
\newcommand{\eeq}{\end{equation}}
\newcommand{\Del}{\Delta}

\newcommand{\nab}{\nabla}
\newcommand{\lam}{\lambda}

\newcommand{\ome}{\omega}
\newcommand{\patl}{\partial}

\newcommand{\al}{\alpha}


\title[Navier-Stokes Equations]{Symplectic Symmetry and Radial
Symmetry Either Persistence or Breaking of Incompressible Fluid}

\author{Yongqian Han}

\date{}

\address{
Institute of Applied Physics and Computational Mathematics,
Beijing 100088, China}
\address{
National Key Laboratory of Computational Physics,
Beijing 100088, China}
\email{han\_yongqian@iapcm.ac.cn}

\keywords{Incompressible Navier-Stokes Equation, Symplectic Symmetry,
Radial Symmetry Persistence, Radial Symmetry Breaking.}

\subjclass[2000]{35Q30, 76D05, 76F02, 37L20}

\begin{document}

\begin{abstract} The incompressible Navier-Stokes equations are
considered. We find that these equations have symplectic symmetry
structures. Two linearly independent symplectic symmetries form
moving frame. The velocity vector possesses symplectic representation
in a moving frame. The symplectic representation of two-dimensional
Navier-Stokes equations holds radial symmetry persistence. On the other
hand, we establish some results of radial symmetry either persistence
or breaking for the symplectic representations of three-dimensional
Navier-Stokes equations. Thanks radial symmetry persistence, we
construct infinite non-trivial solutions of static Euler equations
with given boundary condition.
\end{abstract}

\maketitle

~ ~ ~ ~

\section{Introduction}
\setcounter{equation}{0}

The Navier-Stokes equations in $\R^3$ with initial data are given by
\beq\label{NS1}
u_t-\nu\Del u+(u\cdot\nab)u+\nab P=0,
\eeq
\beq\label{NS2}
\nab\cdot u=0,
\eeq
\beq\label{NSi}
u|_{t=0}=u_0,
\eeq
where $u=u(t,x)=\big(u^1(t,x),u^2(t,x),u^3(t,x)\big)$ and $P=P(t,x)$
stand for the unknown velocity vector field of fluid and its
pressure, $u_0=u_0(x)=\big(u^1_0(x),u^2_0(x),u^3_0(x)\big)$ is
the given initial velocity vector field satisfying $\nab\cdot u_0=0$,
$\nu>0$ is the coefficient of viscosity. Here $\patl_{x_j}$ denotes
by $\patl_j$ ($j=1,2,3$).

For the mathematical setting of this problem, we introduce
Hilbert space
\[
H(\R^3)=\big\{u\in\big(L^2(\R^3)\big)^3\big|\nab \cdot u=0\big\}
\]
endowed with $\big(L^2(\R^3)\big)^3$ norm (resp. scalar product
denoted by $(\cdot,\cdot)$ ). For simplicity of presentation, space
$\big(H^m(\R^3)\big)^3$ denotes by $H^m$, where $m\ge0$. In what
follows we use the usual convention and sum over the repeated indices.

The Fourier transformation of $u(t,x)$ with respect to $x$ denotes by
$\hat{u}(t,\xi)$. Then equation \eqref{NS2} means that
\beq\label{NS2-F}
\xi_j\hat{u}^j=\xi_1\hat{u}^1+\xi_2\hat{u}^2+\xi_3\hat{u}^3=0.
\eeq
It is that $\hat{u}(t,\xi)$ is perpendicular to $\xi$. Denote by $\xi
\bot\hat{u}$. Equivalently $\hat{u}(t,\xi)\in T_{\xi}{\mathbb{S}}^2$.

Let $A=(a,b,c)\in\R^3-\{0\}$ and $\xi\in{\mathbb{S}}^2$. The vector
$\{A\times\xi\}\in T_{\xi}{\mathbb{S}}^2$ is called the 1st order
incompressible symplectic symmetry. Take $3\times3$ matrix $M=\big(
m^{ij}\big)$. The vector $\{\xi\times(M\xi)\}\in T_{\xi}{\mathbb{S}}
^2$ is called the 2nd order incompressible symplectic symmetry.
Generally let $T=\big(T^{i_1\cdots i_n}\big)$ be $n$th order tensor
and $T(\xi\cdots\xi)=T^{i_1i_2\cdots i_n}\xi_{i_2}\cdots\xi_{i_n}$.
The vector $\{\xi\times T(\xi\cdots\xi)\}\in T_{\xi}{\mathbb{S}}^2$
is called the $n$th order incompressible symplectic symmetry.

Let vectors $A\in\R^3-\{0\}$ and $B\in\R^3-\{0\}$ be linear
independent. Then $A\times\xi$ and $B\times\xi$ are linear independent
for any $\xi\in\R^3-\{0\}$, and form the basis of space $T_{\xi}
{\mathbb{S}}^2$ which is so-called moving frame. There exist $
\hat{\phi}(\xi)$ and $\hat{\psi}(\xi)$ such that
\[
\hat{u}(\xi)=\hat{\phi}(\xi)\cdot\{A\times\xi\}+\hat{\psi}(\xi)\cdot
\{B\times\xi\},\;\;\forall \hat{u}(\xi)\in T_{\xi}{\mathbb{S}}^2.
\]

Therefore for any velocity vector $u$ satisfying the equation
\eqref{NS2}, there exist linear independent vectors $A,B\in\R^3-\{0\}$
and real scalar functions $\phi$ and $\psi$ such that
\beq\label{Sym-Rep-11}
u(t,x)=\{A\times\nab\}\phi(t,x)+\{B\times\nab\}\psi(t,x).
\eeq
We call that the formulation \eqref{Sym-Rep-11} is (1,1)-symplectic
representation of velocity vector $u$.

Similarly we call that the following formulation \eqref{Sym-Rep-22}
\beq\label{Sym-Rep-22}
u(t,x)=\{(A\times\nab)\times\nab\}\phi(t,x)+\{(B\times\nab)\times\nab\}\psi(t,x),
\eeq
is (2,2)-symplectic representation of velocity vector $u$. And the
following formulation \eqref{Sym-Rep-12}
\beq\label{Sym-Rep-12}
u(t,x)=\{A\times\nab\}\phi(t,x)+\{(B\times\nab)\times\nab\}\psi(t,x),
\eeq
is called (1,2)-symplectic representation of velocity vector $u$.

There is a large literature studying the incompressible Navier-Stokes
equations. In 1934 Leray \cite{Le34} proved that there exists a global
weak solution to the problem \eqref{NS1}--\eqref{NSi} with initial data
in $L^2$. In 1951 Hopf \cite{Hop51} extended this result to bounded
smooth domain. Moreover Leray-Hopf weak solutions satisfy energy
inequality \cite{Te01}
\beq\label{En-Ineq}
\|u(t,\cdot)\|^2_{L^2}+2\int^t_0\|\nab u(\tau,\cdot)\|^2_{L^2}d\tau
\le \|u_0\|^2_{L^2},\;\;\;\;\forall t>0.
\eeq

The uniqueness and regularity of Leray-Hopf weak solution is a famous
open question. Numerous regularity criteria were proved \cite{ESS03,
Gig86,KeK11,KNSS09,La69,Pro59,Ser62,Wah86}.

Local existence and uniqueness of $H^m$ solution can be established
by using analytic semigroup \cite{Lun95} with initial data in
$H^m(\R^3)$, $m\ge1$. This result is stated as follows.

\begin{prop}[Local $H^m$ Solution]\label{NS-LHmS} Let $u_0\in H^m(\R^3)
\cap H(\R^3)$ and $m\ge1$. Then there exist $T_{max}=T_{max}\big(
\|u_0\|_{H^m}\big)>0$ and a unique solution $u$ of the problem
\eqref{NS1}--\eqref{NSi} such that $u\in C\big([0,T_{max});H^m(\R^3)
\cap H(\R^3)\big)$.
\end{prop}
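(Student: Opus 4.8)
The plan is to recast \eqref{NS1}--\eqref{NSi} as an abstract semilinear parabolic equation on the Hilbert space $H=H(\R^3)$ and then invoke the analytic-semigroup theory of \cite{Lun95}. First I would apply the Leray projection $\mathbb{P}$, the orthogonal projection of $\big(L^2(\R^3)\big)^3$ onto $H(\R^3)$, to \eqref{NS1}. Since $\mathbb{P}$ annihilates gradients, the pressure term is killed and, using $\mathbb{P}u_t=u_t$ for $u\in H$, we obtain the pressure-free evolution equation
\begin{equation}
u_t+\nu A u=F(u),\qquad u(0)=u_0,
\end{equation}
where $A=-\mathbb{P}\Del$ is the Stokes operator and $F(u)=-\mathbb{P}\big((u\cdot\nab)u\big)$; the pressure $P$ is afterwards recovered from $u$ by solving $\Del P=-\nab\cdot\big((u\cdot\nab)u\big)$. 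On the whole space $\mathbb{P}$ commutes with $\Del$, so $A$ acts as $-\Del$ on divergence-free fields, $D(A)=H^2(\R^3)\cap H(\R^3)$, and $-\nu A$ is self-adjoint and nonpositive, hence generates an analytic semigroup $e^{-\nu tA}$ on $H$. Shifting $A$ by the identity to make its fractional powers well defined (which does not affect any short-time estimate below), the fractional-power spaces satisfy $D(A^{s/2})=H^s(\R^3)\cap H(\R^3)$ with equivalent norms, so the target regularity $H^m$ corresponds to $D(A^{m/2})$.

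Next I would establish the bilinear estimate controlling the nonlinearity. Exploiting $\nab\cdot u=0$ to write $(u\cdot\nab)u=\nab\cdot(u\otimes u)$, and using that $\mathbb{P}$ and the Riesz transforms are bounded on $L^2$-based spaces, one reduces matters to a Sobolev product estimate $\|u\otimes v\|_{H^{s}}\lesssim\|u\|_{H^{m}}\|v\|_{H^{m}}$, which in $\R^3$ holds for $s\le m$ and $2m-s>3/2$. Concretely, for a fixed $\theta\in(1/4,1/2)$ one shows
\begin{equation}
\big\|A^{-\theta}F(u)\big\|_{H}\lesssim\|u\|_{H^m}^2,\qquad
\big\|A^{-\theta}\big(F(u)-F(v)\big)\big\|_{H}\lesssim\big(\|u\|_{H^m}+\|v\|_{H^m}\big)\,\|u-v\|_{H^m},
\end{equation}
so that $F$ is locally Lipschitz, indeed quadratic, on bounded subsets of $H^m$. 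The point is that $F$ loses one derivative; absorbing a fraction $2\theta$ of that loss into $A^{-\theta}$ (via the divergence form) while the remaining smoothing is supplied by the semigroup keeps every exponent admissible, and the condition $\theta>1/4$ is exactly what the three-dimensional product estimate requires at the borderline $m=1$.

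With these ingredients I would close the argument by a contraction mapping on the Duhamel formulation
\begin{equation}
u(t)=e^{-\nu tA}u_0-\int_0^t e^{-\nu(t-s)A}F\big(u(s)\big)\,ds
\end{equation}
in the space $C\big([0,T];H^m(\R^3)\cap H(\R^3)\big)$. The smoothing bound $\big\|A^{1/2+\theta}e^{-\nu tA}\big\|_{H\to H}\lesssim(\nu t)^{-(1/2+\theta)}$ combines with the bilinear estimate to bound the integral term by a constant times $T^{1/2-\theta}\,\|u\|_{C([0,T];H^m)}^2$; since $1/2-\theta>0$ the kernel $(t-s)^{-(1/2+\theta)}$ is integrable, and for $T$ small depending only on $\|u_0\|_{H^m}$ the map is a contraction on a ball of $C\big([0,T];H^m\big)$. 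This yields a unique local mild solution; parabolic smoothing of $e^{-\nu tA}$ upgrades it to a classical $H^m$-solution, and uniqueness together with the standard continuation criterion (re-solving from a later time while $\|u(t)\|_{H^m}$ stays finite) produces a maximal existence time $T_{max}=T_{max}\big(\|u_0\|_{H^m}\big)$, as asserted.

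I expect the main obstacle to be the bilinear estimate at the endpoint $m=1$: because $H^1(\R^3)$ is neither a Banach algebra nor embedded in $L^\infty$, the naive pairing of $F:H^1\to L^2$ with the full $A^{1/2}$-smoothing produces the non-integrable kernel $(t-s)^{-1}$. Overcoming this forces both the divergence-form rewriting $(u\cdot\nab)u=\nab\cdot(u\otimes u)$ and the fractional redistribution of derivatives described above, with $\theta$ confined to the narrow window $(1/4,1/2)$; for $m>1$ the same scheme works with more room to spare.
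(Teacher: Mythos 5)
Your proposal is correct and is precisely the argument the paper appeals to: the paper gives no proof of Proposition \ref{NS-LHmS} at all, merely invoking the analytic-semigroup theory of \cite{Lun95}, and your Fujita--Kato-style contraction on the Duhamel formula for the Stokes semigroup, with fractional powers $D(A^{s/2})\simeq H^s\cap H(\R^3)$ and the divergence-form bilinear estimate, is the standard implementation of exactly that citation. One bookkeeping caveat: the fixed window $\theta\in(1/4,1/2)$ paired with $A^{m/2+\theta}$ smoothing is tailored to the borderline $m=1$ (for $m\ge2$ that kernel exponent exceeds one), so for larger $m$ you must, as you implicitly note, shift the derivative count onto the bilinear estimate---e.g. $F:H^m\to H^{m-1}$ boundedly via the algebra property when $m>3/2$---so that the semigroup only ever supplies an integrable power $(t-s)^{-\alpha}$, $\alpha<1$.
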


Local existence and uniqueness of mild solution or strong solution
were established \cite{Cal90,GiM85,Ka84,KaF62,KaP94,Wei80} with
initial data in $L^p(\R^3)$, $p>3$. The main result is as follows.

\begin{prop}[Local Mild Solution]\label{NS-LMS} Let $u_0\in L^p(\R^3)$
satisfy \eqref{NS2} in distribution and $p>3$. Then there exist $
T_{max}=T_{max}\big(\|u_0\|_{L^p}\big)>0$ and a unique solution $u$ of
the problem \eqref{NS1}--\eqref{NSi} such that $u\in C([0,T_{max});
L^p(\R^3))$.
\end{prop}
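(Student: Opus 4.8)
The plan is to recast the problem in its equivalent mild (integral) form and solve it by a contraction argument. First I would eliminate the pressure by applying the Leray--Helmholtz projection $\mathbb{P}$ onto divergence-free fields to \eqref{NS1}. Since $\mathbb{P}(\nab P)=0$ and $\mathbb{P}u=u$ for $u$ satisfying \eqref{NS2}, and $\mathbb{P}$ commutes with $\Del$, the system collapses to the single equation $u_t-\nu\Del u+\mathbb{P}\,\nab\cdot(u\otimes u)=0$, where I have used $(u\cdot\nab)u=\nab\cdot(u\otimes u)$ for divergence-free $u$. Applying Duhamel's principle with the heat semigroup $\{e^{\nu t\Del}\}_{t\ge0}$ turns this into the fixed-point equation
\beq
u(t)=e^{\nu t\Del}u_0-B(u,u)(t),\qquad B(u,v)(t)=\int_0^t e^{\nu(t-s)\Del}\,\mathbb{P}\,\nab\cdot(u\otimes v)(s)\,ds.
\eeq
A solution of this integral equation is the desired mild solution, and the constraint \eqref{NS2} is preserved automatically because both $e^{\nu t\Del}$ and $\mathbb{P}$ map $H(\R^3)$ into itself.

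Second, I would work in the Banach space $X_T=C([0,T];L^p(\R^3))$ with norm $\|u\|_{X_T}=\sup_{0\le t\le T}\|u(t)\|_{L^p}$. The linear term is immediately controlled: since the heat semigroup is a contraction on $L^p$ and is strongly continuous for $1\le p<\infty$, one has $\|e^{\nu\cdot\Del}u_0\|_{X_T}\le\|u_0\|_{L^p}$. The two ingredients I need are the $L^{p/2}$--$L^p$ smoothing estimate for the heat kernel, $\|\nab e^{\tau\Del}f\|_{L^p}\le C\,\tau^{-\frac12-\frac{3}{2p}}\|f\|_{L^{p/2}}$, and the boundedness of $\mathbb{P}$ on $L^p$ for $1<p<\infty$ (Calder\'on--Zygmund / Mihlin multiplier theory, as $\mathbb{P}$ is built from Riesz transforms).

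Third --- and this is the crux --- I would establish the bilinear estimate. Using $\|u\otimes v\|_{L^{p/2}}\le\|u\|_{L^p}\|v\|_{L^p}$ together with the two ingredients above,
\beq
\|B(u,v)(t)\|_{L^p}\le C\int_0^t\big(\nu(t-s)\big)^{-\frac12-\frac{3}{2p}}\|u(s)\|_{L^p}\|v(s)\|_{L^p}\,ds.
\eeq
The temporal singularity $(t-s)^{-\frac12-\frac{3}{2p}}$ is integrable precisely when $\tfrac12+\tfrac{3}{2p}<1$, i.e. \emph{exactly} when $p>3$; this is where the hypothesis enters decisively and is the main obstacle, since the argument breaks down at the scaling-critical value $p=3$. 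Carrying out the integration yields $\|B(u,v)\|_{X_T}\le C_\nu\,T^{\frac12-\frac{3}{2p}}\,\|u\|_{X_T}\|v\|_{X_T}$ with $C_\nu=C\,\nu^{-\frac12-\frac{3}{2p}}/(\tfrac12-\tfrac{3}{2p})$, and the continuity of $t\mapsto B(u,v)(t)$ up to $t=0$ (where it vanishes) follows by dominated convergence, so $B$ maps $X_T\times X_T$ into $X_T$.

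Finally, I would close with the standard bilinear fixed-point lemma: if $\|y\|_{X_T}\le\delta$ and $\|B(u,v)\|_{X_T}\le\eta\|u\|_{X_T}\|v\|_{X_T}$ with $4\eta\delta<1$, then $u=y-B(u,u)$ has a unique solution in the ball $\{\|u\|_{X_T}\le2\delta\}$, realized as the limit of Picard iterates. Here $y=e^{\nu\cdot\Del}u_0$, $\delta=\|u_0\|_{L^p}$, and $\eta=C_\nu T^{\frac12-\frac{3}{2p}}$. Choosing $T$ so small that $4C_\nu T^{\frac12-\frac{3}{2p}}\|u_0\|_{L^p}<1$ makes the map a contraction; this simultaneously determines $T_{max}=T_{max}(\|u_0\|_{L^p})$ and delivers existence, uniqueness, and membership in $C([0,T_{max});L^p(\R^3))$.
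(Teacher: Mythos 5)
Your argument is correct and is essentially the proof the paper relies on: Proposition \ref{NS-LMS} is stated there without proof, the citations \cite{Cal90,GiM85,Ka84,KaF62,KaP94,Wei80} standing in for exactly this Kato--Weissler scheme --- Leray projection, Duhamel formulation, the $L^{p/2}\to L^p$ heat-kernel smoothing estimate, and a bilinear contraction in $C([0,T];L^p(\R^3))$, with integrability of $(t-s)^{-\frac12-\frac{3}{2p}}$ forcing $p>3$ in the same decisive way. One small refinement: the abstract fixed-point lemma gives uniqueness only in the ball $\{\|u\|_{X_T}\le 2\delta\}$, so to obtain uniqueness in all of $C([0,T_{max});L^p(\R^3))$ as the statement asserts, append the standard comparison of two arbitrary mild solutions on a short interval $[0,\tau]$ (both are bounded there by some $M$, the bilinear estimate yields $\|u-v\|_{X_\tau}\le C_\nu\,\tau^{\frac12-\frac{3}{2p}}M\,\|u-v\|_{X_\tau}$, hence $u\equiv v$ for $\tau$ small, and iterate).
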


The uniqueness in Proposition \ref{NS-LHmS} and \ref{NS-LMS} ensures
that the symplectic symmetries which are corresponding to initial data
$u_0$ can be kept by the solution $u$.

Besides the local-posedness, the lower bounds of possible blowup
solutions were considered \cite{CSTY08,CSTY09,CMP14,Gig86,Le34,
RSS12}. The concentration phenomena of possible blowup solutions was
studied \cite{LOW18}.

It is well-known that the equations \eqref{NS1} \eqref{NS2} are
scaling-invariant in the sense that if $u$ solves \eqref{NS1}
\eqref{NS2} with initial data $u_0$, so dose $u_{\lam}(t,x)=\lam
u(\lam^2t,\lam x)$ with initial data $\lam u_0(\lam x)$. A space $X$
defined on $\R^3$ is so-called to be critical provided $\|u_0\|_X=
\|\lam u_0(\lam\cdot)\|_X$ for any $\lam>0$. $L^3(\R^3)$ is one of
critical spaces. For the initial data in critical spaces, the
posedness of global solution of the equations \eqref{NS1}--\eqref{NSi}
is obtained \cite{Can97,Che99,KoT01,Pla96} with small
initial data. The regularity criterion was established \cite{ESS03,
GKP13,KeK11,LiW19,Sere12}. On the other hand, the ill-posedness was
showed \cite{BoP08,Ger08,Wang15,Yon10}.

It is also studied that solutions of the problem
\eqref{NS1}--\eqref{NSi} are in various function spaces \cite{Can03,
GiInM99,GiM89,Ka75,KoT01,Tay92}. Partial regularity of suitable weak
solutions was established \cite{CKN82,LaS99,Lin98,Sch76,Sch77,Vas07,
WaW14}. Non-existence of self-similar solutions was proved
\cite{NeRS96,Tsai98}. Decay of the solutions can be found in
\cite{Car96,MiS01,Sch91,Sch92}, etc.

In this paper, we study the radial symmetry of symplectic
representation for the solutions of the problem \eqref{NS1}--
\eqref{NSi}.

Firstly we consider two-dimensional Navier-Stokes equations. Here $x=
(x_1,x_2)\in{\mathbb{R}}^2$. The main result is as follows.

\begin{theorem}[Radial Symmetry Persistence]\label{SR11-R2-Thm}
For the problem \eqref{NS1}--\eqref{NSi}, let $x=(x_1,x_2)\in\R^2$,
the velocity vectors $u$ and $u_0$ respectively hold the following
(1,1)-symplectic representation
\beq\label{uSR11-R2}\begin{split}
u(t,x)=&\big(-\patl_2\phi(t,x),\patl_1\phi(t,x),0\big),\\
\end{split}\eeq
\beq\label{u0SR11-R2}\begin{split}
u_0(x)=&\big(-\patl_2\phi_0(x),\patl_1\phi_0(x),0\big).\\
\end{split}\eeq
Assume that the initial data $\phi_0(x)$ is radial symmetric function
and regular enough. Then there exists a unique global solution $u$ of
the problem \eqref{NS1}--\eqref{NSi} such that $u$ satisfies
\eqref{uSR11-R2}  and
\beq\label{SolPhi-NS1-R2}\begin{split}
\phi(t,x)=&\frac1{4\pi\nu t}\int_{\R^2}\exp\{-\frac{|y|^2}{4\nu t}\}
\phi_0(x-y)dy.\\
\end{split}\eeq
Moreover the function $\phi$ is radial symmetric function.
\end{theorem}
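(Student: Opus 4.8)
The plan is to reverse-engineer the solution: I would \emph{posit} that $\phi$ solves the linear heat equation, construct the corresponding velocity field, and then show that radial symmetry of $\phi_0$ forces the nonlinear term to be a pure gradient, so that the constructed field is an exact solution of \eqref{NS1}--\eqref{NS2}. Global uniqueness then follows from Proposition \ref{NS-LHmS}.

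First I would let $\phi(t,x)$ be given by \eqref{SolPhi-NS1-R2}, i.e. the solution of $\phi_t=\nu\Del\phi$ with $\phi(0,\cdot)=\phi_0$ obtained from the heat semigroup. Since the Gaussian kernel $K_t(y)=(4\pi\nu t)^{-1}\exp\{-|y|^2/4\nu t\}$ is invariant under rotations of $y$, the change of variables $y\mapsto Ry$ in the convolution \eqref{SolPhi-NS1-R2} gives $\phi(t,Rx)=\phi(t,x)$ for every rotation $R\in SO(2)$; hence $\phi(t,\cdot)$ is radial for all $t>0$, which is the last assertion of the theorem. Smoothness and the decay needed below come from standard heat-semigroup smoothing applied to the regular datum $\phi_0$.

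Next I would define $u=(-\patl_2\phi,\patl_1\phi,0)$ and check \eqref{NS2} immediately, $\nab\cdot u=-\patl_1\patl_2\phi+\patl_2\patl_1\phi=0$. Because $\phi$ solves the heat equation, the linear part collapses: $u_t-\nu\Del u=\big(-\patl_2(\phi_t-\nu\Del\phi),\,\patl_1(\phi_t-\nu\Del\phi),\,0\big)=0$. Hence it remains only to show that the convective term $(u\cdot\nab)u$ is a gradient, so that it can be cancelled by $-\nab P$ for a suitable $P$.

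This is the crux, and the single place where radial symmetry is indispensable. Writing $\phi=\phi(r)$ with $r=|x|$ gives $u=g(r)\,(-x_2,x_1,0)$ with $g(r)=\phi'(r)/r$, a purely circular flow, and a direct computation yields $(u\cdot\nab)u=-g(r)^2\,(x_1,x_2,0)$, a radial field equal to $\nab F$ with $F(r)=-\int_0^r g(s)^2 s\,ds$. Invariantly, taking the scalar curl of \eqref{NS1} produces the vorticity equation $\omega_t-\nu\Del\omega+(u\cdot\nab)\omega=0$ with $\omega=\Del\phi$, and the advection term $(u\cdot\nab)\omega=\patl_1\phi\,\patl_2\Del\phi-\patl_2\phi\,\patl_1\Del\phi$ vanishes because $\nab\phi$ and $\nab\Del\phi$ are parallel when $\phi$ is radial. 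Either way, choosing $P=-F$ makes $u$ an exact, smooth, globally defined solution of \eqref{NS1}--\eqref{NSi}. Since $\phi_0$ is regular enough that $u_0\in H^m\cap H$, Proposition \ref{NS-LHmS} supplies a unique $H^m$ solution on a maximal interval; our global solution must coincide with it there, forcing $T_{max}=\infty$ and proving uniqueness. The only genuine obstacle is the gradient identity; once radial symmetry is used to kill the Jacobian $\patl_1\phi\,\patl_2\Del\phi-\patl_2\phi\,\patl_1\Del\phi$, everything else reduces to routine heat-equation and uniqueness facts.
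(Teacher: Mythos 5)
Your proposal is correct, and it hinges on exactly the same mechanism as the paper: radial symmetry of $\phi$ forces the Jacobian $\{\phi,\Del\phi\}=\patl_1\phi\,\patl_2\Del\phi-\patl_2\phi\,\patl_1\Del\phi$ to vanish, so the problem collapses to the heat equation. But you run the argument in the opposite direction, and that buys you something. The paper substitutes the representation into \eqref{NS1}, derives the Hasegawa--Mima equation $\patl_t\Del\phi-\nu\Del^2\phi=-\{\phi,\Del\phi\}$, kills the bracket by radiality, and then passes from $\Del\{\phi_t-\nu\Del\phi\}=0$ to $\phi_t-\nu\Del\phi=w_0(t)$; that inversion of $\Del$ tacitly uses the fact that a radial harmonic function in ${\mathbb{R}}^2$ that is regular at the origin is spatially constant (radial harmonic functions are $a+b\log r$), a point the paper never spells out. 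Your construct-and-verify route avoids this step entirely: you take $\phi$ to be the heat-kernel convolution \eqref{SolPhi-NS1-R2}, obtain radiality from rotation invariance of the Gaussian exactly as the paper does, and then check the momentum equation directly by exhibiting the convective term as a gradient, $(u\cdot\nab)u=-g(r)^2(x_1,x_2,0)=\nab F$ with $g=\frac1r\patl_r\phi$, which in addition produces the explicit pressure $P=-F$ that the paper, working at the vorticity level, never writes down. The one place where you are more restrictive is uniqueness: you invoke Proposition \ref{NS-LHmS}, which requires $u_0\in H^m\cap H$, whereas the paper's (admittedly vague) moving-frame argument is meant to cover even the infinite-energy data it uses immediately after the theorem, e.g.\ $\phi_0=r^{-2/p+1}$ with $\|u_0\|_{L^p}=\infty$ for all $p$; for such data your uniqueness step does not apply, though under the stated hypothesis that $\phi_0$ is ``regular enough'' your reading is legitimate, and your argument for it is the more solid of the two.
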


By the proof of Theorem \ref{SR11-R2-Thm}, equation \eqref{PB-R-R2}
means that $u_0$ defined by \eqref{u0SR11-R2} is static Euler flow
provided $\phi_0(x)$ is radial symmetric function. Let $\Phi(s)\in
C^{\infty}_c(0,1)$ and $\phi_0(x)=\phi_0(r)=\Phi(r/R_a)$ for any
$R_a>0$, where $x\in\R^2$ and $r^2=x\cdot x$. Then $u=\big(-\patl_2
\phi_0(r),\patl_1\phi_0(r),0\big)$ is the solution of static two
dimensional Euler equations
\beq\label{STE-eq}\begin{split}
&(u\cdot\nab)u+\nab P=0,\\
&\nab\cdot u=0,\\
\end{split}\eeq
with Dirichlet boundary condition
\beq\label{DBC-D2}\begin{split}
u|_{\patl B^2(r<R_a)}=0.
\end{split}\eeq
It is obvious that the number of solution of static two dimensional
Euler equations \eqref{STE-eq} \eqref{DBC-D2} is infinite.

Similarly let $\Phi_j(r)=\al\sin(jr)+\beta\cos(jr)$, $j=1,2,\cdots$, $
\al$ and $\beta$ be any constants. Then
\[\begin{split}
u(x)=&\big(-\patl_2\Phi_j(r),\patl_1\Phi_j(r),0\big)\\
\end{split}\]
is the solution of static two dimensional Euler equations
\eqref{STE-eq} with symplectic representation
\[\begin{split}
u(x)=&\big(-\patl_2\phi(x),\patl_1\phi(x),0\big),\\
\end{split}\]
and periodic boundary condition
\beq\label{PBC-d2}
\phi(r+2\pi)=\phi(r),\;\;\forall r\ge0
\eeq
for any $j=1,2,\cdots$.

For example $\phi_0(r)=r^{-\frac2p+1}$. By Theorem \ref{SR11-R2-Thm},
there exists a unique global solution $u$ although $\|u_0\|_{L^p(\R^2)}
=\infty$ for any $1\le p\le\infty$. The problem is whether this
solution $u$ has continuous dependence on initial data or not.

Contrast to the radial symmetry persistence of two-dimensional
Navier-Stokes equations, there appears more complicate and more
interesting phenomena for three-dimensional Navier-Stokes equations.

We find that the following two equations
\beq\label{12ph-AA-d3}\begin{split}
\frac1r\patl_r\psi\cdot\patl_r\Big(\frac1r\patl_r\phi\Big)-
\frac1r\patl_r\phi\cdot\patl_r\Big(\frac1r\patl_r\psi\Big)=0,\\
\end{split}\eeq
\beq\label{12ps-AA-d3}\begin{split}
\frac1r\patl_r\phi\cdot\patl_r\Big(\frac1r\patl_r\phi\Big)+
\frac1r\patl_r\psi\cdot\patl_r\Big(\frac1r\patl_r\Del\psi\Big)=0,\\
\end{split}\eeq
play key role to solve three-dimensional Navier-Stokes equations and
static three dimensional Euler equations.

\begin{theorem}[Radial Symmetry Either Persistence or Breaking]
\label{SR12-AEB-Thm}
For the problem \eqref{NS1}--\eqref{NSi}, assume that the velocity
vectors $u$ and $u_0\not=0$ respectively hold the following
(1,2)-symplectic representation
\beq\label{uSR12-R3}\begin{split}
u(t,x)=&(A\times\nab)\phi(t,x)
+\{(A\times\nab)\times\nab\}\psi(t,x),\\
\end{split}\eeq
\beq\label{u0SR12-R3}\begin{split}
u_0(x)=&(A\times\nab)\phi_0(x)
+\{(A\times\nab)\times\nab\}\psi_0(x),\\
\end{split}\eeq
where vector $A\in\R^3-\{0\}$. Let $\phi_0(x)$ and $\psi_0(x)$ be
regular enough.

Provided $(\phi,\psi)=\big(\Phi(r),\Psi(r)\big)$ satisfies equations
\eqref{12ph-AA-d3} \eqref{12ps-AA-d3}.

{\rm (I) (Static Euler Flow)} $u=(A\times\nab)\Phi+\{(A\times\nab)
\times\nab\}\Psi$ satisfies static three dimensional Euler equations
\eqref{STE-eq}.

{\rm (II) (Radial Symmetry Persistence)} Provided $(\phi_0,\psi_0)=
\big(\Phi(r),\Psi(r)\big)$. Let us define
\beq\label{NSphs-S-CS}\begin{split}
&\phi(t,x)=\phi(t,r)=e^{-\nu\Del t}\phi_0=e^{-\nu\Del t}\Phi,\\
&\psi(t,x)=\psi(t,r)=e^{-\nu\Del t}\psi_0=e^{-\nu\Del t}\Psi. \\
\end{split}\eeq
Then $u$ defined by \eqref{uSR12-R3} \eqref{NSphs-S-CS} is solution of
the problem \eqref{NS1}--\eqref{NSi}.

{\rm (III) (Radial Symmetry Breaking)} Provided $(\phi_0,\psi_0)\not=
\big(\Phi(r),\Psi(r)\big)$, $u$ defined by \eqref{uSR12-R3} is
solution of the problem \eqref{NS1}--\eqref{NSi}, and $[0,T_{max})$ is
the maximum existent interval of $t$ for this solution $u$. Assume
that there exists $t_0\in(0,T_{max})$ such that $\big(\phi(t,x),
\psi(t,x)\big)$ defined by \eqref{uSR12-R3} is radial with respect to
$x$ at $t=t_0$. Then $T_{max}=\infty$, and there exists $T_r\in(0,t_0]
$ such that this vector $\big(\phi(t,x),\psi(t,x)\big)$ is radial with
respect to $x$ for any $t\ge T_r$,  but it is not radial with respect
to $x$ for any $t\in(0,T_r)$. Otherwise this vector $\big(\phi(t,x),
\psi(t,x)\big)$ can not be radial with respect to $x$ for any $t\in
(0,T_{max})$ although the initial data $\phi_0(x)$ and $\psi_0(x)$
are radial symmetric functions.
\end{theorem}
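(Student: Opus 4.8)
The plan is to rewrite the representation \eqref{uSR12-R3} in the poloidal–toroidal form
\[
u=-\nab\times(A\phi)+\nab\times\nab\times(A\psi),
\]
both of whose summands are divergence-free, and then to strip the pressure from \eqref{NS1} by applying the two scalar projections $A\cdot(\nab\times\,\cdot\,)$ and $A\cdot(\nab\times\nab\times\,\cdot\,)$. Using $A\cdot u=L_A\psi$ and $A\cdot(\nab\times u)=-L_A\phi$ with $L_A:=(A\cdot\nab)^2-|A|^2\Del$ (Fourier symbol $|A\times\xi|^2$), this yields the reduced system
\[
L_A(\patl_t\phi-\nu\Del\phi)=A\cdot\nab\times\big[(u\cdot\nab)u\big],\qquad
\Del L_A(\patl_t\psi-\nu\Del\psi)=A\cdot\nab\times\nab\times\big[(u\cdot\nab)u\big].
\]
I would first record that $L_A$ is invertible off the line $\xi\parallel A$, on which every right-hand side of the form $A\cdot\nab\times[\cdots]$ automatically vanishes, so $(\phi,\psi)$ are well defined as long as $u$ is.

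For \textrm{(I)} and \textrm{(II)} I would substitute the radial ansatz $(\phi,\psi)=(\Phi(r),\Psi(r))$ into the right-hand sides: a direct computation identifies $A\cdot\nab\times[(u\cdot\nab)u]$ and $A\cdot\nab\times\nab\times[(u\cdot\nab)u]$ with multiples of the left-hand sides of \eqref{12ph-AA-d3} and \eqref{12ps-AA-d3}, so these equations are exactly $\nab\times[(u\cdot\nab)u]=0$, i.e.\ $(u\cdot\nab)u=-\nab P$, proving \textrm{(I)}. For \textrm{(II)}, the heat flow \eqref{NSphs-S-CS} preserves radial symmetry and, crucially, preserves \eqref{12ph-AA-d3}--\eqref{12ps-AA-d3}: equation \eqref{12ph-AA-d3} is a vanishing Wronskian, equivalent to $\patl_r\phi=c\,\patl_r\psi$, a linear relation preserved by the heat semigroup; granting it, \eqref{12ps-AA-d3} reduces to $c^2\psi+\Del\psi$ being affine in $|x|^2$, a class preserved since the heat flow sends $|x|^2$ to $|x|^2+6\nu t$. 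Hence $(\phi(t),\psi(t))$ obeys \eqref{12ph-AA-d3}--\eqref{12ps-AA-d3} for every $t$, so by \textrm{(I)} the convective term is a gradient while $\patl_t\phi=\nu\Del\phi$, $\patl_t\psi=\nu\Del\psi$ annihilate the linear part of \eqref{NS1}; thus $u$ solves \eqref{NS1}--\eqref{NS2} with a suitable pressure.

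For \textrm{(III)} I would take local existence, uniqueness, and (by parabolic smoothing) real-analyticity in $t$ on $(0,T_{max})$ from Proposition \ref{NS-LHmS}, then decompose each scalar into its spherical mean and its non-radial part $\Pi\phi,\Pi\psi$, which solve a parabolic system driven by $\Pi$ of the quadratic terms. The organizing claim is that radiality at one instant forces \eqref{12ph-AA-d3}--\eqref{12ps-AA-d3} there: if $(\phi(t_0),\psi(t_0))$ is radial then, by the reduced system, the nonlinear projections must be $L_A$- resp.\ $\Del L_A$-images of radial functions, which together with the explicit form from \textrm{(I)} pins down \eqref{12ph-AA-d3}--\eqref{12ps-AA-d3} at $t_0$. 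Applying \textrm{(II)} from $t_0$ then shows $u$ coincides with the heat flow for $t\ge t_0$, is radial there, and is global, so $T_{max}=\infty$; defining $T_r:=\inf\{\tau>0:(\phi,\psi)(t)\text{ is radial for all }t\ge\tau\}$ and running the same fact yields non-radiality on $(0,T_r)$, while $T_r>0$ is forced by the hypothesis $(\phi_0,\psi_0)\neq(\Phi(r),\Psi(r))$: since the datum is not a radial steady state, \textrm{(I)} gives $\Pi(\patl_t\phi,\patl_t\psi)\neq0$ at $0^+$, so symmetry breaks immediately. This also gives the ``otherwise'' branch.

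The hard part, which I expect to dominate the work, is the \emph{no isolated radial instant} claim. The obstruction is genuine: at a single time $t_0$ one cannot assume $\patl_t\phi(t_0)$ is radial merely because $\phi(t_0)$ is, since $(\Del L_A)^{-1}$ of a radial source is generically non-radial, so a priori $t_0$ could be an isolated radial time with \eqref{12ph-AA-d3}--\eqref{12ps-AA-d3} violated. I would close this gap through time-analyticity combined with a backward-uniqueness/unique-continuation argument for the reduced parabolic system, showing that the non-radial part $(\Pi\phi,\Pi\psi)$ cannot vanish to finite order in $t$ and hence either vanishes on a whole interval $[T_r,\infty)$ or never vanishes on $(0,T_{max})$ — precisely the dichotomy asserted.
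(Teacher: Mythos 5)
Your reduction is the paper's reduction in different notation: the projections $A\cdot(\nabla\times\,\cdot\,)$ and $A\cdot(\nabla\times\nabla\times\,\cdot\,)$ are exactly the paper's contractions of \eqref{NS1} and of the vorticity equation with $B\times\nabla$ and $A\times\nabla$ (specialized to $B=A$), and your $L_A$ is, up to sign, the paper's operator $(A\cdot B)\Delta-(A\cdot\nabla)(B\cdot\nabla)$ in \eqref{NS-SR12-phi}--\eqref{NS-SR12-psi}. Parts (I) and (II) then coincide with the paper's argument: reading \eqref{12ph-AA-d3} as a vanishing Wronskian to get $\phi=h\psi$, reducing \eqref{12ps-AA-d3} to the statement that $h^2\psi+\Delta\psi$ is affine in $r^2$, and observing that the heat flow preserves this class (the paper's Lemma \ref{SolEq-Vrr} together with the commutation of $e^{-\nu\Delta t}$ with $\lambda-\Delta$). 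One small point you share with the paper: passing from the two scalar identities to the full vector statement $\nabla\times\{(u\cdot\nabla)u\}=0$ in (I) requires inverting $L_A$ modulo its kernel (functions of $A\cdot x$), which deserves a line of justification for the non-decaying profiles such as those in Corollary \ref{SC-EE-PBC}; the paper's ``by all calculations'' is equally terse here.

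The genuine gap is in (III), and it is the one you yourself flag. Your dichotomy rests on the claim that radiality of $(\phi,\psi)$ at a \emph{single} instant $t_0$ forces \eqref{12ph-AA-d3}--\eqref{12ps-AA-d3} at $t_0$; as you correctly observe, this cannot be read off from the reduced system, because $\partial_t\phi(t_0)$ need not be radial when $\phi(t_0)$ is — the reduced equation only says $\partial_t\phi(t_0)-\nu\Delta\phi(t_0)$ equals $(A\cdot x)$ times a radial integral, and if that integral is nonzero the time derivative is simply non-radial, with no contradiction at $t_0$ itself. Your proposal defers exactly this point to an unexecuted ``time-analyticity plus backward uniqueness'' argument, so the central assertion of (III) is not actually proved. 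Moreover the sketched fix is doubtful as stated: the spherical-mean projection $\Pi$ commutes with neither $L_A$ nor the nonlinearity, so $(\Pi\phi,\Pi\psi)$ does not satisfy a self-contained parabolic system; and the terms driving $(\Pi\phi,\Pi\psi)$ are generated by the \emph{radial} parts whenever the constraints \eqref{12ph-AA-d3}--\eqref{12ps-AA-d3} fail, hence do not vanish when $\Pi\phi=\Pi\psi=0$ — which is precisely what defeats a differential-inequality/backward-uniqueness argument of the kind you invoke. For comparison, the paper does not close this step either: in its proof it simply asserts that radiality at $t_n$ implies \eqref{12ph-AEB2}--\eqref{12ps-AEB2} at $t_n$, an assertion whose derivation tacitly treats $\phi_t-\nu\Delta\phi$ as radial and is therefore only justified when radiality is assumed on a time interval. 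So you have located the weakest step of the theorem more honestly than the paper states it, but neither your sketch nor the paper supplies the missing argument, and as submitted your proof of (III) is incomplete.
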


\begin{coro}\label{SC-EE-PBC}
Let us define
\beq\label{EE-PBC-d3}\begin{split}
&\Phi_{\lam\al\beta}(r)=\lam\Psi_{\lam\al\beta}(r),\\
&\Psi_{\lam\al\beta}(r)=\al\frac1r\sin(\lam r)
+\beta\frac1r\cos(\lam r),\\
\end{split}\eeq
where $\lam,\al$ and $\beta$ are any real constants. Then $(\phi,\psi)=
\Big(\Phi_{\lam\al\beta}(r),\Psi_{\lam\al\beta}(r)\Big)$ satisfies
equations \eqref{12ph-AA-d3} \eqref{12ps-AA-d3}. $u=(A\times\nab)
\Phi_{\lam\al\beta}+\{(A\times\nab)\times\nab\}\Psi_{\lam\al\beta}$ is
solution of static three dimensional Euler equations \eqref{STE-eq}.

Let us take
\beq\label{EE-PBC-d3-pp}\begin{split}
&\phi(t,x)=\phi(t,r)=e^{-\nu\lam^2t}\Phi_{\lam\al\beta}
=\lam\psi(t,x),\\
&\psi(t,x)=\psi(t,r)=e^{-\nu\lam^2t}\Psi_{\lam\al\beta}. \\
\end{split}\eeq
Then $u$ defined by \eqref{uSR12-R3} \eqref{EE-PBC-d3-pp} is solution
of the problem \eqref{NS1}--\eqref{NSi}.
\end{coro}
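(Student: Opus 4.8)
The plan is to reduce the entire statement to a single structural fact, namely that the radial profile $\Psi_{\lam\al\beta}$ is a spherically symmetric eigenfunction of the Laplacian in $\R^3$,
\[
\Del\Psi_{\lam\al\beta}=-\lam^2\Psi_{\lam\al\beta}.
\]
First I would prove this identity. Writing $\Psi_{\lam\al\beta}(r)=g(r)/r$ with $g(r)=\al\sin(\lam r)+\beta\cos(\lam r)$, and using that for a radial function one has $\Del(g/r)=g''/r$ (the terms produced by $\tfrac2r\patl_r$ exactly cancel the lower-order derivatives of $g/r$), the claim follows immediately from $g''=-\lam^2 g$.

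Second, I would verify the two radial equations. Because the ansatz \eqref{EE-PBC-d3} imposes $\Phi_{\lam\al\beta}=\lam\Psi_{\lam\al\beta}$, i.e.\ $\phi=\lam\psi$, equation \eqref{12ph-AA-d3} holds identically: the substitution $\patl_r\phi=\lam\patl_r\psi$ makes its two terms equal and opposite, so their difference vanishes. For \eqref{12ps-AA-d3} the same substitution gives
\[
\frac1r\patl_r\psi\cdot\Big[\lam^2\patl_r\Big(\frac1r\patl_r\psi\Big)+\patl_r\Big(\frac1r\patl_r\Del\psi\Big)\Big]
=\frac1r\patl_r\psi\cdot\patl_r\Big[\frac1r\patl_r\big(\lam^2\psi+\Del\psi\big)\Big],
\]
and the bracket vanishes by the eigenfunction identity $\lam^2\psi+\Del\psi=0$. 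Hence $(\phi,\psi)=(\Phi_{\lam\al\beta},\Psi_{\lam\al\beta})$ satisfies both \eqref{12ph-AA-d3} and \eqref{12ps-AA-d3}, and the static Euler claim is then immediate from part (I) of Theorem \ref{SR12-AEB-Thm}.

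Third, for the Navier--Stokes solution I would check that the time dependence \eqref{EE-PBC-d3-pp} is precisely the heat flow \eqref{NSphs-S-CS} required by part (II). Since $\psi(t,r)=e^{-\nu\lam^2 t}\Psi_{\lam\al\beta}$ obeys $\patl_t\psi=-\nu\lam^2\psi$ and $\nu\Del\psi=\nu(-\lam^2)\psi$, it solves the heat equation $\patl_t\psi=\nu\Del\psi$ with data $\Psi_{\lam\al\beta}$; thus $\psi(t,\cdot)=e^{-\nu\Del t}\Psi_{\lam\al\beta}$, and likewise $\phi(t,\cdot)=e^{-\nu\Del t}\Phi_{\lam\al\beta}=\lam\psi(t,\cdot)$, so the relation $\phi=\lam\psi$ persists in time. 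With $(\phi_0,\psi_0)=(\Phi_{\lam\al\beta},\Psi_{\lam\al\beta})$ this is exactly the hypothesis of part (II) of Theorem \ref{SR12-AEB-Thm}, which then yields that $u$ in \eqref{uSR12-R3} \eqref{EE-PBC-d3-pp} solves \eqref{NS1}--\eqref{NSi}.

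The proof carries no deep obstacle; the one genuine observation is that, after imposing $\phi=\lam\psi$, the nonlinear equation \eqref{12ps-AA-d3} collapses to the Helmholtz equation $\Del\psi+\lam^2\psi=0$, and $\Psi_{\lam\al\beta}$ is constructed to be exactly its radial eigenfunction. The only bookkeeping to watch is the cancellation giving $\Del(g/r)=g''/r$ and the antisymmetric vanishing in \eqref{12ph-AA-d3}.
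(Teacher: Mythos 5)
Your proposal is correct and follows essentially the same route as the paper: impose $\phi=\lam\psi$ so that \eqref{12ph-AA-d3} vanishes identically, reduce \eqref{12ps-AA-d3} to the Helmholtz equation $\lam^2\psi+\Del\psi=0$ (the paper solves this ODE via the substitution $r\psi$, you verify the given $\Psi_{\lam\al\beta}$ satisfies it via $\Del(g/r)=g''/r$ --- the same computation in opposite directions), and then invoke parts (I) and (II) of Theorem \ref{SR12-AEB-Thm}. Your explicit check that $e^{-\nu\lam^2t}\Psi_{\lam\al\beta}$ coincides with the heat flow $e^{-\nu\Del t}\Psi_{\lam\al\beta}$, so that \eqref{EE-PBC-d3-pp} really is the hypothesis \eqref{NSphs-S-CS} of part (II), is a small step the paper leaves implicit, and it is a worthwhile addition.
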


In Corollary \ref{SC-EE-PBC}, let $\lam=j=1,2,\cdots$. Then
\[
u=(A\times\nab)\Phi_{j\al\beta}+\{(A\times\nab)\times\nab\}\Psi_{j\al\beta}
\]
is the solution of static three dimensional Euler equations
\eqref{STE-eq} with symplectic representation
\beq\label{SR12-d3SE}
u=(A\times\nab)\phi+\{(A\times\nab)\times\nab\}\psi
\eeq
and periodic boundary condition
\beq\label{PBC-d3}
\{\xi\phi(\xi)\}\big|_{\xi=r+2\pi}=r\phi(r),\;\;
\{\xi\psi(\xi)\}\big|_{\xi=r+2\pi}=r\psi(r),\;\;\forall r\ge0
\eeq
for any $j=1,2,\cdots$.

\begin{remark}\label{Bac-Tran} The equation \eqref{SR12-d3SE} is
so-called B$\ddot{a}$cklund transformation which changes equations
\eqref{12ph-AA-d3} \eqref{12ps-AA-d3} into static three dimensional
Euler equations \eqref{STE-eq}.

Provided $\big(\Phi(r),\Psi(r)\big)$ satisfies equations
\eqref{12ph-AA-d3} \eqref{12ps-AA-d3}. It is that
\beq\label{12Lph-AA-d3}\begin{split}
\frac1r\patl_r\Psi\cdot\patl_r\Big(\frac1r\patl_r\Phi\Big)-
\frac1r\patl_r\Phi\cdot\patl_r\Big(\frac1r\patl_r\Psi\Big)=0,\\
\end{split}\eeq
\beq\label{12Lps-AA-d3}\begin{split}
\frac1r\patl_r\Phi\cdot\patl_r\Big(\frac1r\patl_r\Phi\Big)+
\frac1r\patl_r\Psi\cdot\patl_r\Big(\frac1r\patl_r\Del\Psi\Big)=0.\\
\end{split}\eeq
Let
\beq\label{NSphs-S-CS1}\begin{split}
&\phi(t,x)=e^{-\nu\Del t}\Phi,\\
&\psi(t,x)=e^{-\nu\Del t}\Psi. \\
\end{split}\eeq
The equations \eqref{uSR12-R3} \eqref{NSphs-S-CS1} are also
B$\ddot{a}$cklund transformations which change equations
\eqref{12Lph-AA-d3} \eqref{12Lps-AA-d3} into
Navier-Stokes equations \eqref{NS1} \eqref{NS2}.

More B$\ddot{a}$cklund transformations can be found in
\cite{Bac75,RogS82}.
\end{remark}

\begin{theorem}[Radial Symmetry Breaking]\label{SR12-AVB-Thm}
Let $u$ be solution of the problem \eqref{NS1}--\eqref{NSi}. Assume
that the velocity vectors $u$ and $u_0$ respectively hold the
following (1,2)-symplectic representation
\beq\label{uSR22-R3}\begin{split}
u(t,x)=&(A\times\nab)\phi(t,x)+\{(B\times\nab)\times\nab\}\psi(t,x),\\
\end{split}\eeq
\beq\label{u0SR22-R3}\begin{split}
u_0(x)=&(A\times\nab)\phi_0(x)+\{(B\times\nab)\times\nab\}\psi_0(x),\\
\end{split}\eeq
where vectors $A\in\R^3-\{0\}$ and $B\in\R^3-\{0\}$ are perpendicular
each other $A\bot B$. Let $\phi_0(x)$ and $\psi_0(x)$ be regular
enough. Then the functions $\phi(t,x)$ and $\psi(t,x)$ can not be
radial symmetric functions of $x$ for any $t>0$ although the initial
data $\phi_0(x)$ and $\psi_0(x)$ are radial symmetric functions,
except that $\phi_0=f_2r^2+f_0$, $\psi_0=g_4r^4+g_2r^2+g_0$ and
$f_2g_4=0$. Here $f_j$ and $g_j$ (j=0, 2, 4) are arbitrary constants.

Especially provided $u_0\not=0$ and $u_0\in L^2(\R^3)$, then the
functions $\phi(t,x)$ and $\psi(t,x)$ can not be radial symmetric
functions of $x$ for any $t>0$ although the initial data $\phi_0(x)$
and $\psi_0(x)$ are radial symmetric functions.
\end{theorem}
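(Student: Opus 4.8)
The plan is to substitute the (1,2)-representation \eqref{uSR22-R3} into \eqref{NS1}--\eqref{NS2}, eliminate the pressure, and show that radial symmetry can survive only for the exceptional polynomials. Since $A\bot B$ and both are nonzero, after a rotation of coordinates and a rescaling of $\phi,\psi$ I may assume $A=e_3$ and $B=e_1$; radial functions stay radial under rotation, so this costs nothing. For radial $\phi=\Phi(r)$, $\psi=\Psi(r)$ the two building blocks become explicit: $(A\times\nab)\phi=\frac{\Phi'}{r}(A\times x)$ is a rotation field about the $A$-axis, while $\{(B\times\nab)\times\nab\}\psi=\nab(B\cdot\nab\psi)-B\Del\psi=m(r)(B\cdot x)x+n(r)B$ with $m=\frac1{r^2}(\Psi''-\frac{\Psi'}{r})$ and $n=-(\Psi''+\frac{\Psi'}{r})$. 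I would record that $u$ is automatically divergence free and that $\patl_t u-\nu\Del u$ again lies in the (1,2)-representation space, with arguments $(\patl_t-\nu\Del)\phi$ and $(\patl_t-\nu\Del)\psi$.

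The core step is the curl (equivalently the Leray projection) of the momentum balance, which removes $\nab P$. If $\phi(t,\cdot),\psi(t,\cdot)$ stay radial on a time interval then $\patl_t\phi,\patl_t\psi$ are radial, so the linear part stays in the radial representation; consequently the solenoidal part of $(u\cdot\nab)u$ must also lie there. I would split the nonlinearity into the $\phi$-self, $\psi$-self and cross contributions and test each against $\nab\times(\cdot)$. The $\phi$-self term is the cleanest: writing $g=\Phi'/r$ one finds $(u_\phi\cdot\nab)u_\phi=g^2\{A(A\cdot x)-|A|^2x\}$, so $\nab\times[(u_\phi\cdot\nab)u_\phi]=(g^2)'\frac{A\cdot x}{r}(x\times A)$, which vanishes iff $g$ is constant, i.e. iff $\Phi=f_2r^2+f_0$. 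An analogous but longer computation for the $\psi$-self term yields an ODE in $r$ whose only solutions regular at the origin are $\Psi=g_4r^4+g_2r^2+g_0$. Finally the cross term $(u_\phi\cdot\nab)u_\psi+(u_\psi\cdot\nab)u_\phi$, evaluated on these polynomials (so $u_\phi=2f_2(A\times x)$ is a rigid rotation and $u_\psi=8g_4(B\cdot x)x-16g_4r^2B$), produces a curl proportional to $f_2g_4$ times a nonzero angular field orthogonal to the self contributions; its vanishing forces $f_2g_4=0$. This pins down exactly the stated family.

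Having identified the candidates, I would check the converse: for $\phi_0=f_2r^2+f_0$, $\psi_0=g_4r^4+g_2r^2+g_0$ with $f_2g_4=0$ the field $u_0$ is a static Euler flow, and since $e^{\nu t\Del}$ preserves radial polynomials of this degree, the heat-evolved pair \eqref{NSphs-S-CS1} keeps $(u\cdot\nab)u$ a pure gradient for all $t$ and hence solves \eqref{NS1}--\eqref{NS2}; radial symmetry persists precisely in these cases. For the breaking statement I argue by contradiction: if $\phi,\psi$ were radial for all small $t>0$ with radial data outside this family, the instantaneous constraint that $\nab\times[(u\cdot\nab)u]$ lie in the curl of the radial representation space, derived above, could not hold, because the self-term ODEs already force the polynomial degrees and the cross term forces $f_2g_4=0$. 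Thus the radial data must belong to the exceptional family, and otherwise $\phi(t,\cdot),\psi(t,\cdot)$ leave the radial class for every $t>0$. The final assertion is then immediate: a nonconstant member of the family gives $u_0$ growing like $|x|$ or $|x|^3$, so $u_0\in L^2(\R^3)$ forces all nonconstant coefficients to vanish, i.e. $u_0=0$; hence for $0\neq u_0\in L^2$ no exception survives and radial symmetry always breaks.

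The main obstacle I anticipate is the cross-term analysis: one must show both that its curl is genuinely proportional to $f_2g_4$ (not merely that it contains such a term) and that it cannot be cancelled by the self-terms or by the radial linear part, i.e. that the three resulting angular structures are linearly independent as functions on the sphere. Equivalently, the delicate point is proving that no non-polynomial radial profile can make the full solenoidal nonlinearity re-enter the radial (1,2)-representation space; handling the regularity and decay needed to discard the singular homogeneous solutions of the self-term ODEs, using the "regular enough" hypothesis, is the remaining technical care.
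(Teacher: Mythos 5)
Your overall strategy --- eliminate the pressure, radialize, and force the angular structure of the solenoidal part of $(u\cdot\nab)u$ to match that of the linear part --- is the same idea that drives the paper's proof, and the pieces you do compute are correct: for $u_\phi=(A\times\nab)\phi$ with $\phi$ radial one indeed has $(u_\phi\cdot\nab)u_\phi=g^2\{A(A\cdot x)-|A|^2x\}$ with $g=\Phi'/r$ and curl $(g^2)'\frac{A\cdot x}{r}(x\times A)$, and the cross term evaluated on the exceptional polynomials does have curl equal to a constant multiple of $f_2g_4\{(B\cdot x)A+(A\cdot x)B\}$, which cannot lie in the span of $(A\cdot x)x\,h_1(r)+A\,h_2(r)+(B\times x)\,h_3(r)$ unless $f_2g_4=0$. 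Where you differ from the paper is in the separation device: the paper first contracts the momentum and vorticity equations with $B\times\nab$ and $A\times\nab$ to get two scalar equations (here \eqref{12-Rph-AVB}, \eqref{12-Rps-AVB}), and then separates the angular dependence by comparing each equation at $x$ and at an orthogonally transformed point (the rotation $\rho_b$ about $B$ in \eqref{12-RoB}, the cyclic permutations \eqref{SR12-OT}, \eqref{SR12-OrT}), obtaining \eqref{12ph-AVB1}, \eqref{12ps-AVB-P01}, \eqref{12ps-AVB-P03}, \eqref{12ps-AVB1}; you propose instead to take the full vector curl and decompose into vector-valued angular structures on each sphere.

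The genuine gap is the one you yourself flag at the end, and it is not a peripheral technicality but the decisive step: nothing in your argument shows that the $\phi$-self, $\psi$-self, cross and linear contributions have linearly independent angular structures, so that each constraint decouples. Without that, the inference ``the $\phi$-self curl must vanish, hence $\Phi=f_2r^2+f_0$'' is unjustified --- a priori it could be cancelled by a piece of the cross term, whose curl also carries degree-two and degree-three harmonics in $(A\cdot x)$, $(B\cdot x)$, $((A\times B)\cdot x)$ --- and your subsequent step, evaluating the cross term ``on these polynomials,'' then becomes circular, since it presupposes the self-term conclusions. Verifying this independence (equivalently, carrying out the analogue of the paper's expansions \eqref{12R-NTuu-bt}, \eqref{12R-NTuou-at} and its transformation comparisons such as \eqref{12-Rph-AVByx}, \eqref{12ps-AVB-P0yz}, \eqref{12-Rps-AVByx}) is essentially the entire content of the paper's proof; in your write-up it is asserted for the $\psi$-self term and the cross term and deferred for the independence claim, so what you have is a sound plan rather than a proof. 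Minor slips to repair as well: for $\Psi=g_4r^4+g_2r^2+g_0$ the second block is $u_\psi=8g_4(B\cdot x)x-(16g_4r^2+4g_2)B$ (you dropped $4g_2B$; compare \eqref{12-u-phs5}); this field grows like $|x|^2$, not $|x|^3$; and in the final $L^2$ step you must also exclude the nonzero constant field arising when $f_2=g_4=0$ but $g_2\neq0$, which likewise has infinite $L^2(\R^3)$ norm.
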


\begin{theorem}[Radial Symmetry Breaking]\label{SR11-Thm}
Let $u$ be solution of the problem \eqref{NS1}--\eqref{NSi}. Assume
that the velocity vectors $u$ and $u_0\not=0$ respectively hold the
following (1,1)-symplectic representation
\beq\label{uSR11-R3}\begin{split}
u(t,x)=&(A\times\nab)\phi(t,x)+(B\times\nab)\psi(t,x),\\
\end{split}\eeq
\beq\label{u0SR11-R3}\begin{split}
u_0(x)=&(A\times\nab)\phi_0(x)+(B\times\nab)\psi_0(x),\\
\end{split}\eeq
where vectors $A\in\R^3-\{0\}$ and $B\in\R^3-\{0\}$ are linearly
independent. Let $\phi_0(x)$ and $\psi_0(x)$ be regular enough. Then
the functions $\phi(t,x)$ and $\psi(t,x)$ can not be radial symmetric
functions of $x$ for any $t>0$ although the initial data $\phi_0(x)$
and $\psi_0(x)$ are radial symmetric functions, except that $\phi_0=
f_2r^2+f_0$ and $\psi_0=g_2r^2+g_0$. Here $f_j$ and $g_j$ (j=0, 2) are
arbitrary constants.

Especially provided $u_0\not=0$ and $u_0\in L^2(\R^3)$, then the
functions $\phi(t,x)$ and $\psi(t,x)$ can not be radial symmetric
functions of $x$ for any $t>0$ although the initial data $\phi_0(x)$
and $\psi_0(x)$ are radial symmetric functions.
\end{theorem}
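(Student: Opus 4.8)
The plan is to substitute the (1,1)-representation \eqref{uSR11-R3} directly into \eqref{NS1}, exploit the algebraic identities satisfied by the constant-coefficient operator $A\times\nabla$, and then eliminate the pressure by taking the curl, thereby reducing the radial question to a tiny system of ordinary differential equations in $r$. Two structural facts come first. Since $A,B$ are constant, $A\times\nabla$ and $B\times\nabla$ commute with $\partial_t$ and $\Delta$, so the linear part of \eqref{NS1} is $A\times\nabla(\phi_t-\nu\Delta\phi)+B\times\nabla(\psi_t-\nu\Delta\psi)$; write $F=\phi_t-\nu\Delta\phi$, $G=\psi_t-\nu\Delta\psi$, which are radial whenever $\phi,\psi$ are. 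Second, assuming $\phi=\phi(t,r)$, $\psi=\psi(t,r)$ radial and setting $p=\phi'/r$, $q=\psi'/r$, one has $u=p\,(A\times x)+q\,(B\times x)$.

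Because $u\cdot x=0$ we get $u\cdot\nabla p=u\cdot\nabla q=0$, and the elementary identity $(u\cdot\nabla)(A\times x)=A\times u$ gives $(u\cdot\nabla)u=p\,(A\times u)+q\,(B\times u)$. Expanding with the $\mathrm{BAC\!-\!CAB}$ rule yields
\[
(u\cdot\nabla)u=\alpha A+\beta B+\gamma x,
\]
with $\alpha=p^2(A\cdot x)+pq(B\cdot x)$, $\beta=pq(A\cdot x)+q^2(B\cdot x)$, and $\gamma$ radial. Applying $\nabla\times$ to \eqref{NS1} removes the pressure gradient. Using $\nabla\times(A\times\nabla F)=A\Delta F-\nabla(A\cdot\nabla F)$ on the linear part and differentiating the nonlinear part, the two $A\times B$ contributions cancel and $\nabla\gamma\times x=0$, so the curl equation collapses to
\[
A\,h_A+B\,h_B-\Lambda(x)\,x+N_A\,(x\times A)+N_B\,(x\times B)=0,
\]
where $h_A,h_B$ are radial, $\Lambda(x)=(F'/r)'(A\cdot x)/r+(G'/r)'(B\cdot x)/r$, and $N_A=(p^2)'(A\cdot x)/r+(pq)'(B\cdot x)/r$, $N_B=(pq)'(A\cdot x)/r+(q^2)'(B\cdot x)/r$.

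The decisive step is to dot this identity with the fixed vectors $A$ and $B$. On the right one meets genuinely angular terms proportional to $(A\cdot x)\,[x,A,B]$ and $(B\cdot x)\,[x,A,B]$ (with coefficients built from $(p^2)'$, $(pq)'$, $(q^2)'$), whereas the left-hand sides $|A|^2h_A+(A\cdot B)h_B$ and $(A\cdot B)h_A+|B|^2h_B$ are radial. Separating spherical harmonics at fixed $r$ — these products are $\ell=2$ profiles that cannot match a radial ($\ell=0$) function — forces $(p^2)'=(pq)'=(q^2)'=0$. Hence $p,q$ are constant, so $\phi=f_2r^2+f_0$ and $\psi=g_2r^2+g_0$. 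Feeding this back into the spherically averaged (radial) part of the curl equation and using that the Gram matrix of the linearly independent pair $A,B$ is invertible gives $\dot f_2=\dot g_2=0$; the flow is steady and $\phi_0,\psi_0$ already have the stated polynomial form. Conversely such data produce a steady, genuinely radial solution (the viscous term vanishes and $(u\cdot\nabla)u$ is an exact gradient), which is precisely the exception. The $L^2$ statement is then immediate: since $A\times x$ grows linearly, $u_0=2f_2(A\times x)+2g_2(B\times x)\in L^2$ forces $f_2=g_2=0$, hence $u_0=0$, contradicting $u_0\neq0$.

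The main obstacle I anticipate is the bookkeeping in the curl computation: organizing $(u\cdot\nabla)u$ and its curl into the independent tensor structures $A,B,x,x\times A,x\times B$, verifying that the out-of-plane $A\times B$ pieces cancel, and then justifying the harmonic separation rigorously — that at each fixed $r$ the angular profiles $(A\cdot x)^2,(A\cdot x)(B\cdot x),(B\cdot x)^2,(A\cdot x)[x,A,B],(B\cdot x)[x,A,B]$ are linearly independent from constants on the sphere. Once this linear-algebraic separation is secured, the collapse to $(p^2)'=(pq)'=(q^2)'=0$, and thence to the quadratic exception, is essentially forced.
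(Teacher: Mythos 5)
Your proposal is correct, and it reaches the theorem by a genuinely different and more economical route than the paper. The paper never uses the full vector curl equation: it projects the vorticity equation onto two scalars via $(B\times\nabla)\cdot$ and $(A\times\nabla)\cdot$ (through the identities $(B\times\nabla)\cdot\omega=(A\times B)\cdot\nabla\Delta\phi$, $(A\times\nabla)\cdot\omega=-(A\times B)\cdot\nabla\Delta\psi$), expands the nonlinearity by brute force into pages of terms, and then separates angular monomials by applying discrete orthogonal maps (a reflection $\rho_{ab}$ fixing the plane of $A,B$, then rotations $\rho_a,\rho_b$ about the axes $A$ and $B$); this yields $\partial_r\Delta\{\phi_t-\nu\Delta\phi\}=0$ and $\partial_r\bigl\{\frac1r\partial_r\phi\cdot\frac1r\partial_r\bigl(\frac1r\partial_r\phi\bigr)\bigr\}=0$, after which it parametrizes $\frac1r\partial_r\phi=\pm\{f_2r^2+f_1\}^{1/2}$ and kills $f_2$ by letting $r\to\infty$ in the residual heat-type ODE. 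You instead keep the full curl of the momentum equation (equivalent to the existence of a pressure on $\R^3$), compress the nonlinearity via $(u\cdot\nabla)u=p\,(A\times u)+q\,(B\times u)=\alpha A+\beta B+\gamma x$, and replace the discrete-symmetry trick by harmonic separation: at fixed $r$ the traceless parts of $A\odot A$, $A\odot B$, $B\odot B$, $A\odot(A\times B)$, $B\odot(A\times B)$ are linearly independent (immediate in coordinates $A=e_1$, $B=b_1e_1+b_2e_2$, $b_2\neq0$), so matching $\ell=2$ parts in the equations dotted with $A$ and $B$ forces $(p^2)'=(pq)'=(q^2)'=0$ together with $(F'/r)'=(G'/r)'=0$. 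This buys three things: the enormous expansions of Section~4 disappear; the square-root parametrization and $r\to\infty$ asymptotics become unnecessary, since constancy of $p,q$ in $r$ is immediate; and, a genuine improvement, the leftover radial identity $Ah_A+Bh_B=0$ with $h_A=2\dot f_2$, $h_B=2\dot g_2$ gives $\dot f_2=\dot g_2=0$, i.e.\ the exceptional coefficients are constants in time as the theorem asserts --- a point the paper's own proof leaves loose (it concludes only $\partial_r\phi=f_1r$ with ``$f_1$ any function of $t$'', because its two scalar projections discard exactly the information your vector equation retains). The single step you flagged, linear independence of the angular profiles, does need to be written out, but it is the elementary matrix computation above, and your $L^2$ endgame ($u_0=(2f_2A+2g_2B)\times x\notin L^2$ unless $2f_2A+2g_2B=0$, whence $f_2=g_2=0$ by linear independence) is correct.
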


\begin{theorem}[Radial Symmetry Breaking]\label{SR22-Thm}
Let $u$ be solution of the problem \eqref{NS1}--\eqref{NSi}. Assume
that the velocity vectors $u$ and $u_0$ respectively hold the
following (2,2)-symplectic representation
\beq\label{uSR22-R3}\begin{split}
u(t,x)=&\{(A\times\nab)\times\nab\}\phi(t,x)
+\{(B\times\nab)\times\nab\}\psi(t,x),\\
\end{split}\eeq
\beq\label{u0SR22-R3}\begin{split}
u_0(x)=&\{(A\times\nab)\times\nab\}\phi_0(x)
+\{(B\times\nab)\times\nab\}\psi_0(x),\\
\end{split}\eeq
where vectors $A\in\R^3-\{0\}$ and $B\in\R^3-\{0\}$ are linearly
independent. Let $\phi_0(x)$ and $\psi_0(x)$ be regular enough. Then
the functions $\phi(t,x)$ and $\psi(t,x)$ can not be radial symmetric
functions of $x$ for any $t>0$ although the initial data $\phi_0(x)$
and $\psi_0(x)$ are radial symmetric functions, except that $\phi_0=
f_4r^4+f_2r^2+f_0$, $\psi_0=g_4r^4+g_2r^2+g_0$ and $f_2g_4=f_4g_2$.
Here $f_j$ and $g_j$ (j=0, 2, 4) are arbitrary constants.

Especially provided $u_0\not=0$ and $u_0\in L^2(\R^3)$, then the
functions $\phi(t,x)$ and $\psi(t,x)$ can not be radial symmetric
functions of $x$ for any $t>0$ although the initial data $\phi_0(x)$
and $\psi_0(x)$ are radial symmetric functions.
\end{theorem}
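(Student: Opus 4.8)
The plan is to parallel the arguments for the $(1,1)$ and $(1,2)$ cases (Theorems \ref{SR11-Thm} and \ref{SR12-AVB-Thm}), exploiting the identity
\[
\{(A\times\nab)\times\nab\}f=\nab(A\cdot\nab f)-A\Del f ,
\]
whose divergence vanishes, so that both terms of \eqref{uSR22-R3} automatically satisfy \eqref{NS2}. Substituting \eqref{uSR22-R3} into \eqref{NS1} and writing the convective term in Lamb form $(u\cdot\nab)u=\nab\big(\tfrac12|u|^2\big)-u\times\ome$ with $\ome=\nab\times u$, I would absorb $\nab\big(P+\tfrac12|u|^2\big)$ into a modified pressure $\tld P$ and obtain
\[
\{(A\times\nab)\times\nab\}(\phi_t-\nu\Del\phi)+\{(B\times\nab)\times\nab\}(\psi_t-\nu\Del\psi)=u\times\ome-\nab\tld P .
\]
Applying the Leray projection $\MP$ (equivalently, taking the curl) annihilates the pressure gradient, so the entire obstruction to radial symmetry is carried by $\MP(u\times\ome)$.

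The key reduction is the following. If $\phi_0,\psi_0$ are radial then $\Del\phi_0,\Del\psi_0$ are radial, so $\phi_t|_{t=0}$ and $\psi_t|_{t=0}$ are radial \emph{iff} the potentials $F,G$ solving $\MP(u_0\times\ome_0)=\{(A\times\nab)\times\nab\}F+\{(B\times\nab)\times\nab\}G$ are themselves radial. In Fourier variables the representation map sends $(\hat\phi,\hat\psi)$ to $-|\xi|^2\big(A_\perp\hat\phi+B_\perp\hat\psi\big)$, where $A_\perp,B_\perp$ are the projections of $A,B$ onto $\xi^\perp$; since $A,B$ are linearly independent this map is invertible for a.e.\ $\xi$, its only degeneracy sitting on the low-degree polynomial modes produced by the factor $|\xi|^2$. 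This is why the exceptional data are forced to be polynomials, and why the second-order $(2,2)$ operator (two derivatives, hence $\hat u\sim|\xi|^2$) raises the admissible degree to $r^4$, as opposed to $r^2$ in the $(1,1)$ case.

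Next I would insert the radial ansatz, writing $u=u_\phi+u_\psi$ with $u_\phi=\{(A\times\nab)\times\nab\}\phi$, etc., and compute $u\times\ome$ through the profile functions $g_\phi=\phi'/r$, $g_\psi=\psi'/r$, for which one checks $\{(A\times\nab)\times\nab\}\phi=-A(2g_\phi+rg_\phi')+(A\cdot x)\tfrac{g_\phi'}{r}\,x$. Expanding $u\times\ome=u_\phi\times\ome_\phi+u_\psi\times\ome_\psi+u_\phi\times\ome_\psi+u_\psi\times\ome_\phi$ and projecting onto the independent tensor structures built from $x$, $A$, $B$, $A\times B$, the components transverse to the representation span must vanish, yielding a system of ODEs in $r$ for $g_\phi,g_\psi$. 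I expect the self-interaction terms to force each profile into the polynomial form $\phi_0=f_4r^4+f_2r^2+f_0$, $\psi_0=g_4r^4+g_2r^2+g_0$ (degree $4$ being the largest for which $u$ is a quadratic polynomial field and $u\times\ome$ remains polynomial), while the mixed terms $u_\phi\times\ome_\psi+u_\psi\times\ome_\phi$ contribute a non-gradient, non-radial part that cancels precisely when $f_2g_4=f_4g_2$. Outside this set the residual $\MP(u_0\times\ome_0)$ is non-radial, so $\phi,\psi$ lose radial symmetry instantaneously for $t>0$.

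The main obstacle is the bookkeeping of this nonlinear computation: expanding $u\times\ome$ for the second-order operator and correctly separating the genuinely non-gradient, non-radial part from the part absorbable into $\tld P$, so as to read off both the degree restriction and the coupling $f_2g_4=f_4g_2$. Finally, the $L^2$ statement is immediate: any nonzero field of the exceptional polynomial type grows at least like a nonconstant polynomial in $x$, hence $u_0\notin L^2(\R^3)$ unless all coefficients vanish, i.e.\ $u_0=0$; therefore for $u_0\neq0$ with $u_0\in L^2(\R^3)$ no radial symmetry can survive for any $t>0$.
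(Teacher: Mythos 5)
Your setup is sound and matches the paper's in spirit: the identity $\{(A\times\nab)\times\nab\}f=\nab(A\cdot\nab f)-A\Del f$, the elimination of the pressure (the paper contracts \eqref{NS1} with $B\times\nab$ and $A\times\nab$, which annihilates $\nab P$ exactly as your curl/Leray projection does), the radial profile formula $-A(2g_\phi+rg_\phi')+(A\cdot x)\frac{g_\phi'}{r}\,x$, and the final $L^2$ argument are all correct. But the proposal is a plan rather than a proof: the two steps that constitute the theorem are left as expectations. First, after inserting the radial ansatz you must actually extract scalar constraints. The paper does this concretely: the contracted equations reduce to $\Del\{\phi_t-\nu\Del\phi\}=W_\phi(r)\cdot x$ and $\Del\{\psi_t-\nu\Del\psi\}=W_\psi(r)\cdot x$, with $W_\phi,W_\psi$ explicit combinations of $A$ and $B$ with radial coefficients (equations \eqref{NS22-phi2}, \eqref{NS22-psi2}); since the left-hand sides are radial, comparing the equation at $x$ and at $\rho x$ for a suitable orthogonal $\rho$ (or simply by parity in $x$) forces $W_\phi=W_\psi=0$, and only then does linear independence of $A,B$ split each identity into the scalar equations \eqref{22-ph2nrA}--\eqref{22-ps2nrB}. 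Your ``projection onto the independent tensor structures'' gestures at this but conflates it with the Helmholtz decomposition of $u\times\ome$; as stated it is not an argument.

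Second, the identification of the exceptional family is precisely the content you defer with ``I expect.'' The paper solves the scalar system by a case analysis: the self-interaction equations give $\patl_r\phi\cdot\patl_r\big(\frac1r\patl_r\Del\phi\big)=0$ and $\patl_r\psi\cdot\patl_r\big(\frac1r\patl_r\Del\psi\big)=0$, which combined with $\Del\{\phi_t-\nu\Del\phi\}=0$, $\Del\{\psi_t-\nu\Del\psi\}=0$ (equations \eqref{NS22-phi2r}, \eqref{NS22-psi2r}) yield $\phi=f_4r^4+(12f_4\nu t+f_2)r^2+f_0(t)$ and $\psi=g_4r^4+(12g_4\nu t+g_2)r^2+g_0(t)$ --- note the heat flow makes the admissible radial profiles time-dependent, something your $t=0$ discussion does not capture --- and the two cross equations \eqref{22-ph2nrB}, \eqref{22-ps2nrA} then impose $f_2g_4=f_4g_2$. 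Your Fourier heuristic cannot substitute for this analysis: polynomial fields correspond to distributions supported at $\xi=0$, so ``invertibility for a.e.\ $\xi$'' says nothing about which polynomials survive, and in particular it cannot produce the coupling $f_2g_4=f_4g_2$, which is a genuinely nonlinear cross-term constraint, not a kernel condition of the linear representation map.
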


Here radial symmetry breaking is a kind of singularity of structure.
Appropriate orthogonal transformations are the main ingredient of
proving these theorems.

The plan of this paper is as follows. Section 2 is devoted to study
radial symmetry persistence of two-dimensional Navier-Stokes equations.
Section 3 is devoted to show radial symmetry either persistence or
breaking of three-dimensional Navier-Stokes equations with
(1,2)-symplectic representation. Section 4 is devoted to establish
radial symmetry breaking of three-dimensional Navier-Stokes equations
with (1,1)-symplectic representation. Section 5 is devoted to
investigate radial symmetry breaking of three-dimensional
Navier-Stokes equations with (2,2)-symplectic representation.

\section{(1,1)-Symplectic Representation and \\Radial Symmetry Persistence
in ${\mathbb{R}}^2$}
\setcounter{equation}{0}

In this section, we assume that $x=(x_1,x_2)\in{\mathbb{R}}^2$ and the
velocity vector $u$ holds the following (1,1)-symplectic representation
\beq\label{Sym-Rep-11-R2}\begin{split}
&u(t,x)=u(t,x_1,x_2)=\big(u^1(t,x_1,x_2),u^2(t,x_1,x_2),0\big),\\
&u^1(t,x_1,x_2)=-\patl_2\phi(t,x_1,x_2),\\
&u^2(t,x_1,x_2)=\patl_1\phi(t,x_1,x_2).\\
\end{split}\eeq
Putting together \eqref{NS1} and \eqref{Sym-Rep-11-R2}, we have
\beq\label{NS-SR11-R2}\begin{split}
\patl_t\Del\phi-\nu\Del^2\phi=&-\patl_1(u\cdot\nab)u^2
+\patl_2(u\cdot\nab)u^1\\
=&-\{\phi,\Del \phi\},\\
\end{split}\eeq
where $\Del=\patl_1^2+\patl_2^2$, Poisson
bracket $\{\cdot,\cdot\}$ is given by
\[
\{f,g\}=\patl_1f\patl_2g-\patl_2f\patl_1g.
\]
The equation \eqref{NS-SR11-R2} is well known Hasegawa-Mima equation
(\cite{HM77,HM78}).

Now we assume that $\phi$ is radial symmetric function with respect to
space variable $x\in\R^2$. It is that $\phi(t,x)=\phi(t,r)$ and $r^2=
x_1^2+x_2^2$. Then we have
\beq\label{PB-R-R2}\begin{split}
\{\phi,\Del \phi\}
=\frac{x_1}r\patl_r\phi\cdot\frac{x_2}r\patl_r\Del\phi
-\frac{x_2}r\patl_r\phi\cdot\frac{x_1}r\patl_r\Del\phi=0,\\
\end{split}\eeq
and the equation \eqref{NS-SR11-R2} is equivalent to
\beq\label{NS11-R2-1}\begin{split}
\patl_t\phi-\nu\Del\phi=&w_0(t),\\
\end{split}\eeq
where $w_0$ is any function of $t$.

\eqref{PB-R-R2} means that $u=(-\patl_2\phi,\patl_1\phi,0)$ is the
solution of two dimensional static Euler equation \eqref{STE-eq}.

There exists a global solution of equation \eqref{NS11-R2-1}
\beq\label{RadSol-R2}\begin{split}
\phi(t,x)=\frac1{4\pi\nu t}\int_{\R^2}\exp\{-\frac{|y|^2}{4\nu t}\}
\phi_0(x-y)dy+\int^t_0w_0(s)ds,\\
\end{split}\eeq
where $\phi_0(r)=\phi(t,r)|_{t=0}$.

$\phi(t,x)$ is radial symmetric function of $x$ since $\phi_0$
is radial.

Because $w_0(t)$ has no contribution on velocity $u$, we select $w_0=0$.

For any point $\xi\in{\mathbb S}^1$, the unit tangent vector $v\in
T_{\xi}{\mathbb S}^1$ is unique corresponding to given positive
direction. Then the moving frame on $T{\mathbb S}^1$ is also unique,
and the solution $u$ is unique.

Theorem \ref{SR11-R2-Thm} is proved.

\section{(1,2)-Symplectic Representation and Radial Symmetry Either
Persistence or Breaking in ${\mathbb{R}}^3$}
\setcounter{equation}{0}

In this section, we assume that the velocity vector $u$ holds the
following (1,2)-symplectic representation
\beq\label{SR12-R3}\begin{split}
u(t,x)=&(A\times\nab)\phi(t,x)+\{(B\times\nab)\times\nab\}\psi(t,x),\\
\end{split}\eeq
where vectors $A=(a_1,a_2,a_3)\in\R^3-\{0\}$ and $B=(b_1,b_2,b_3)\in
\R^3-\{0\}$.

Let
\beq\label{Def-SR12-ome}\begin{split}
\ome(t,x)=&\nab\times u(t,x)\\
=&-\{(A\times\nab)\times\nab\}\phi(t,x)+(B\times\nab)\Del\psi(t,x),\\
\end{split}\eeq
Taking curl with equation \eqref{NS1}, we have
\beq\label{NS-SR12-ome}
\ome_t-\nu\Del\ome+(u\cdot\nab)\ome-(\ome\cdot\nab)u=0.
\eeq

Thanks the following observations
\beq\label{SR12-phi}\begin{split}
(B\times\nab)\cdot u(t,x)&=(A\times\nab)\cdot(B\times\nab)\phi(t,x)\\
&=\big\{(A\cdot B)\Del-(A\cdot\nab)(B\cdot\nab)\big\}\phi(t,x),\\
\end{split}\eeq
\beq\label{SR12-psi}\begin{split}
(A\times\nab)\cdot \ome(t,x)&=(A\times\nab)\cdot(B\times\nab)\Del\psi(t,x)\\
&=\big\{(A\cdot B)\Del-(A\cdot\nab)(B\cdot\nab)\big\}\Del\psi(t,x),\\
\end{split}\eeq
taking scalar product of equation \eqref{NS1} with $B\times\nab$,
we have
\beq\label{NS-SR12-phi}
\big\{(A\cdot B)\Del-(A\cdot\nab)(B\cdot\nab)\big\}\{\phi_t-\nu\Del\phi\}
+(B\times\nab)\cdot\{(u\cdot\nab)u\}=0.
\eeq
And taking scalar product of equation \eqref{NS-SR12-ome} with
$A\times\nab$, we derive
\beq\label{NS-SR12-psi}
\big\{(A\cdot B)\Del-(A\cdot\nab)(B\cdot\nab)\big\}\Del\{\psi_t-\nu\Del\psi\}
+(A\times\nab)\cdot\{(u\cdot\nab)\ome-(\ome\cdot\nab)u\}=0.
\eeq

Now we assume that $\phi$ and $\psi$ are radial symmetric functions
with respect to space variable $x\in\R^3$. It is that $\phi(t,x)=
\phi(t,r)$, $\psi(t,x)=\psi(t,r)$  and $r^2=x_1^2+x_2^2+x_3^2$.
Then we have
\beq\label{12-Ru}\begin{split}
u(t,x)=&(A\times x)\frac1r\patl_r\phi
+(B\cdot x)x\frac1r\patl_r\Big(\frac1r\patl_r\psi\Big)\\
&+B\frac1r\patl_r\psi-B\Del\psi,\\
\end{split}\eeq
\beq\label{SR12-Rome}\begin{split}
\ome(t,x)=&-(A\cdot x)x\frac1r\patl_r\Big(\frac1r\patl_r\phi\Big)
-A\frac1r\patl_r\phi\\
&+A\Del\phi+(B\times x)\frac1r\patl_r\Del\psi,\\
\end{split}\eeq
\beq\label{12R-NTuu}\begin{split}
(u\cdot\nab)u=&\Big\{\frac1r\patl_r\phi\{(A\times x)\cdot\nab\}
+(B\cdot x)\frac1r\patl_r\Big(\frac1r\patl_r\psi\Big)\{x\cdot\nab\}\\
&+\frac1r\patl_r\psi\{B\cdot\nab\}-\Del\psi\{B\cdot\nab\}\Big\}\Big\{
(A\times x)\frac1r\patl_r\phi\\
&+(B\cdot x)x\frac1r\patl_r\Big(\frac1r\patl_r\psi\Big)
+B\frac1r\patl_r\psi-B\Del\psi\Big\}\\
\end{split}\eeq
\[\label{12Ruu}\begin{split}
=&\{A\times(A\times x)\}\frac1r\patl_r\phi\cdot\frac1r\patl_r\phi\\
&-x\{(A\times B)\cdot x\}\frac1r\patl_r\phi\cdot
\frac1r\patl_r\Big(\frac1r\patl_r\psi\Big)\\
&+2(A\times x)(B\cdot x)\frac1r\patl_r\phi\cdot
\frac1r\patl_r\Big(\frac1r\patl_r\psi\Big)\\
&+(A\times x)(B\cdot x)\patl_r\Big(\frac1r\patl_r\phi\Big)\cdot
\patl_r\Big(\frac1r\patl_r\psi\Big)\\
&+2x(B\cdot x)^2\frac1r\patl_r\Big(\frac1r\patl_r\psi\Big)\cdot
\frac1r\patl_r\Big(\frac1r\patl_r\psi\Big)\\
&+x(B\cdot x)^2\patl_r\Big(\frac1r\patl_r\psi\Big)\cdot
\patl_r\Big\{\frac1r\patl_r\Big(\frac1r\patl_r\psi\Big)\Big\}\\
&+B(B\cdot x)\patl_r\Big(\frac1r\patl_r\psi\Big)\cdot
\patl_r\Big(\frac1r\patl_r\psi\Big)\\
&-B(B\cdot x)\patl_r\Big(\frac1r\patl_r\psi\Big)\cdot
\patl_r\Del\psi\\
&+(A\times B)\frac1r\patl_r\phi\cdot\frac1r\patl_r\psi\\
&+(A\times x)(B\cdot x)\frac1r\patl_r\psi\cdot
\frac1r\patl_r\Big(\frac1r\patl_r\phi\Big)\\
&+x(B\cdot B)\frac1r\patl_r\psi\cdot
\frac1r\patl_r\Big(\frac1r\patl_r\psi\Big)\\
&+B(B\cdot x)\frac1r\patl_r\psi\cdot
\frac1r\patl_r\Big(\frac1r\patl_r\psi\Big)\\
&+x(B\cdot x)^2\frac1r\patl_r\psi\cdot
\frac1r\patl_r\Big\{\frac1r\patl_r\Big(\frac1r\patl_r\psi\Big)\Big\}\\
&+B(B\cdot x)\frac1r\patl_r\psi\cdot
\frac1r\patl_r\Big(\frac1r\patl_r\psi\Big)\\
&-B(B\cdot x)\frac1r\patl_r\psi\cdot\frac1r\patl_r\Del\psi\\
&-(A\times B)\Del\psi\cdot\frac1r\patl_r\phi\\
&-(A\times x)(B\cdot x)\Del\psi\cdot
\frac1r\patl_r\Big(\frac1r\patl_r\phi\Big)\\
&-x(B\cdot B)\Del\psi\cdot\frac1r\patl_r\Big(\frac1r\patl_r\psi\Big)\\
&-B(B\cdot x)\Del\psi\cdot\frac1r\patl_r\Big(\frac1r\patl_r\psi\Big)\\
&-x(B\cdot x)^2\Del\psi\cdot
\frac1r\patl_r\Big\{\frac1r\patl_r\Big(\frac1r\patl_r\psi\Big)\Big\}\\
&-B(B\cdot x)\Del\psi\cdot\frac1r\patl_r\Big(\frac1r\patl_r\psi\Big)\\
&+B(B\cdot x)\Del\psi\cdot\frac1r\patl_r\Del\psi,\\
\end{split}\]

\beq\label{12R-NTuu-bt}\begin{split}
&(B\times\nab)\cdot\{(u\cdot\nab)u\}\\
=&(B\times\nab)\cdot\Big\{\{A(A\cdot x)-x(A\cdot A)\}
\frac1r\patl_r\phi\cdot\frac1r\patl_r\phi\Big\}\\
&-(B\times\nab)\cdot\Big\{x\{(A\times B)\cdot x\}\frac1r\patl_r\phi\cdot
\frac1r\patl_r\Big(\frac1r\patl_r\psi\Big)\Big\}\\
&+2(B\times\nab)\cdot\Big\{(A\times x)(B\cdot x)\frac1r\patl_r\phi\cdot
\frac1r\patl_r\Big(\frac1r\patl_r\psi\Big)\Big\}\\
&+(B\times\nab)\cdot\Big\{(A\times x)(B\cdot x)\patl_r\Big(\frac1r
\patl_r\phi\Big)\cdot\patl_r\Big(\frac1r\patl_r\psi\Big)\Big\}\\
&+2(B\times\nab)\cdot\Big\{x(B\cdot x)^2\frac1r\patl_r\Big(\frac1r
\patl_r\psi\Big)\cdot\frac1r\patl_r\Big(\frac1r\patl_r\psi\Big)\Big\}\\
&+(B\times\nab)\cdot\Big\{x(B\cdot x)^2\patl_r\Big(\frac1r\patl_r\psi\Big)
\cdot\patl_r\Big\{\frac1r\patl_r\Big(\frac1r\patl_r\psi\Big)\Big\}\Big\}\\
&+(B\times\nab)\cdot\Big\{B(B\cdot x)\patl_r\Big(\frac1r\patl_r\psi\Big)
\cdot\patl_r\Big(\frac1r\patl_r\psi\Big)\Big\}\\
&-(B\times\nab)\cdot\Big\{B(B\cdot x)\patl_r\Big(\frac1r\patl_r\psi\Big)
\cdot\patl_r\Del\psi\Big\}\\
&+(B\times\nab)\cdot\Big\{(A\times B)\frac1r\patl_r\phi\cdot
\frac1r\patl_r\psi\Big\}\\
&+(B\times\nab)\cdot\Big\{(A\times x)(B\cdot x)\frac1r\patl_r\psi\cdot
\frac1r\patl_r\Big(\frac1r\patl_r\phi\Big)\Big\}\\
&+(B\times\nab)\cdot\Big\{x(B\cdot B)\frac1r\patl_r\psi\cdot
\frac1r\patl_r\Big(\frac1r\patl_r\psi\Big)\Big\}\\
&+(B\times\nab)\cdot\Big\{B(B\cdot x)\frac1r\patl_r\psi\cdot
\frac1r\patl_r\Big(\frac1r\patl_r\psi\Big)\Big\}\\
&+(B\times\nab)\cdot\Big\{x(B\cdot x)^2\frac1r\patl_r\psi\cdot\frac1r
\patl_r\Big\{\frac1r\patl_r\Big(\frac1r\patl_r\psi\Big)\Big\}\Big\}\\
&+(B\times\nab)\cdot\Big\{B(B\cdot x)\frac1r\patl_r\psi\cdot
\frac1r\patl_r\Big(\frac1r\patl_r\psi\Big)\Big\}\\
&-(B\times\nab)\cdot\Big\{B(B\cdot x)\frac1r\patl_r\psi\cdot
\frac1r\patl_r\Del\psi\Big\}\\
&-(B\times\nab)\cdot\Big\{(A\times B)\Del\psi\cdot
\frac1r\patl_r\phi\Big\}\\
&-(B\times\nab)\cdot\Big\{(A\times x)(B\cdot x)\Del\psi\cdot
\frac1r\patl_r\Big(\frac1r\patl_r\phi\Big)\Big\}\\
&-(B\times\nab)\cdot\Big\{x(B\cdot B)\Del\psi\cdot
\frac1r\patl_r\Big(\frac1r\patl_r\psi\Big)\Big\}\\
&-(B\times\nab)\cdot\Big\{B(B\cdot x)\Del\psi\cdot
\frac1r\patl_r\Big(\frac1r\patl_r\psi\Big)\Big\}\\
&-(B\times\nab)\cdot\Big\{x(B\cdot x)^2\Del\psi\cdot\frac1r\patl_r
\Big\{\frac1r\patl_r\Big(\frac1r\patl_r\psi\Big)\Big\}\Big\}\\
&-(B\times\nab)\cdot\Big\{B(B\cdot x)\Del\psi\cdot
\frac1r\patl_r\Big(\frac1r\patl_r\psi\Big)\Big\}\\
&+(B\times\nab)\cdot\Big\{B(B\cdot x)\Del\psi\cdot
\frac1r\patl_r\Del\psi\Big\}\\
\end{split}\eeq
\[\label{uu-bt}\begin{split}
=&\{(A\times B)\cdot x\}(A\cdot x)\frac1r\patl_r
\Big(\frac1r\patl_r\phi\cdot\frac1r\patl_r\phi\Big)\\
&+\{(A\cdot B)(B\cdot x)-(B\cdot B)(A\cdot x)\}\frac1r\patl_r\phi\cdot
\frac1r\patl_r\Big(\frac1r\patl_r\psi\Big)\\
&+4(A\cdot B)(B\cdot x)\frac1r\patl_r\phi\cdot
\frac1r\patl_r\Big(\frac1r\patl_r\psi\Big)\\
&+2\{(A\cdot B)r^2-(A\cdot x)(B\cdot x)\}(B\cdot x)\frac1r\patl_r\Big\{
\frac1r\patl_r\phi\cdot\frac1r\patl_r\Big(\frac1r\patl_r\psi\Big)\Big\}\\
&+2(A\cdot B)(B\cdot x)\patl_r\Big(\frac1r
\patl_r\phi\Big)\cdot\patl_r\Big(\frac1r\patl_r\psi\Big)\\
&+\{(A\cdot B)r^2-(A\cdot x)(B\cdot x)\}(B\cdot x)\frac1r\patl_r\Big\{
\patl_r\Big(\frac1r\patl_r\phi\Big)\cdot\patl_r
\Big(\frac1r\patl_r\psi\Big)\Big\}\\
&+\{(A\cdot B)(B\cdot x)-(B\cdot B)(A\cdot x)\}\frac1r\patl_r
\Big\{\frac1r\patl_r\phi\cdot\frac1r\patl_r\psi\Big\}\\
&+2(A\cdot B)(B\cdot x)\frac1r\patl_r\psi\cdot
\frac1r\patl_r\Big(\frac1r\patl_r\phi\Big)\\
&+\{(A\cdot B)r^2-(A\cdot x)(B\cdot x)\}(B\cdot x)\frac1r\patl_r\Big\{
\frac1r\patl_r\psi\cdot\frac1r\patl_r\Big(\frac1r\patl_r\phi\Big)\Big\}\\
&-\{(A\cdot B)(B\cdot x)-(B\cdot B)(A\cdot x)\}\frac1r\patl_r
\Big\{\Del\psi\cdot\frac1r\patl_r\phi\Big\}\\
&-2(A\cdot B)(B\cdot x)\Del\psi\cdot
\frac1r\patl_r\Big(\frac1r\patl_r\phi\Big)\\
&-\{(A\cdot B)r^2-(A\cdot x)(B\cdot x)\}(B\cdot x)\frac1r\patl_r\Big\{
\Del\psi\cdot\frac1r\patl_r\Big(\frac1r\patl_r\phi\Big)\Big\},\\
\end{split}\]

\beq\label{12R-NTuo}\begin{split}
(u\cdot\nab)\ome=&\Big\{\frac1r\patl_r\phi\{(A\times x)\cdot\nab\}
+(B\cdot x)\frac1r\patl_r\Big(\frac1r\patl_r\psi\Big)\{x\cdot\nab\}\\
&+\frac1r\patl_r\psi\{B\cdot\nab\}-\Del\psi\{B\cdot\nab\}\Big\}\\
&\Big\{-(A\cdot x)x\frac1r\patl_r\Big(\frac1r\patl_r\phi\Big)
-A\frac1r\patl_r\phi+A\Del\phi+(B\times x)\frac1r\patl_r\Del\psi\Big\}\\
\end{split}\eeq
\[\label{12Ruo}\begin{split}
=&-(A\times x)(A\cdot x)\frac1r\patl_r\phi\cdot
\frac1r\patl_r\Big(\frac1r\patl_r\phi\Big)\\
&+(B\times (A\times x))\frac1r\patl_r\phi\cdot\frac1r\patl_r\Del\psi\\
&-2x(A\cdot x)(B\cdot x)\frac1r\patl_r\Big(\frac1r\patl_r\psi\Big)
\cdot\frac1r\patl_r\Big(\frac1r\patl_r\phi\Big)\\
&-x(A\cdot x)(B\cdot x)\patl_r\Big(\frac1r\patl_r\psi\Big)\cdot
\patl_r\Big\{\frac1r\patl_r\Big(\frac1r\patl_r\phi\Big)\Big\}\\
&-A(B\cdot x)\patl_r\Big(\frac1r\patl_r\psi\Big)\cdot
\patl_r\Big(\frac1r\patl_r\phi\Big)\\
&+A(B\cdot x)\patl_r\Big(\frac1r\patl_r\psi\Big)\cdot\patl_r\Del\phi\\
&+(B\times x)(B\cdot x)\frac1r\patl_r\Big(\frac1r\patl_r\psi\Big)\cdot
\frac1r\patl_r\Del\psi\\
&+(B\times x)(B\cdot x)\patl_r\Big(\frac1r\patl_r\psi\Big)\cdot
\patl_r\Big(\frac1r\patl_r\Del\psi\Big)\\
&-x(A\cdot B)\frac1r\patl_r\psi\cdot
\frac1r\patl_r\Big(\frac1r\patl_r\phi\Big)\\
&-B(A\cdot x)\frac1r\patl_r\psi\cdot
\frac1r\patl_r\Big(\frac1r\patl_r\phi\Big)\\
&-x(A\cdot x)(B\cdot x)\frac1r\patl_r\psi\cdot
\frac1r\patl_r\Big\{\frac1r\patl_r\Big(\frac1r\patl_r\phi\Big)\Big\}\\
&-A(B\cdot x)\frac1r\patl_r\psi\cdot
\frac1r\patl_r\Big(\frac1r\patl_r\phi\Big)\\
&+A(B\cdot x)\frac1r\patl_r\psi\cdot\frac1r\patl_r\Del\phi\\
&+(B\times x)(B\cdot x)\frac1r\patl_r\psi\cdot
\frac1r\patl_r\Big(\frac1r\patl_r\Del\psi\Big)\\
&+x(A\cdot B)\Del\psi\cdot\frac1r\patl_r\Big(\frac1r\patl_r\phi\Big)\\
&+B(A\cdot x)\Del\psi\cdot\frac1r\patl_r\Big(\frac1r\patl_r\phi\Big)\\
&+x(A\cdot x)(B\cdot x)\Del\psi\cdot
\frac1r\patl_r\Big\{\frac1r\patl_r\Big(\frac1r\patl_r\phi\Big)\Big\}\\
&+A(B\cdot x)\Del\psi\cdot\frac1r\patl_r\Big(\frac1r\patl_r\phi\Big)\\
&-A(B\cdot x)\Del\psi\cdot\frac1r\patl_r\Del\phi\\
&-(B\times x)(B\cdot x)\Del\psi\cdot
\frac1r\patl_r\Big(\frac1r\patl_r\Del\psi\Big)\\
\end{split}\]

\[\label{12Ruo}\begin{split}
=&-(A\times x)(A\cdot x)\frac1r\patl_r\phi\cdot
\frac1r\patl_r\Big(\frac1r\patl_r\phi\Big)\\
&+A(B\cdot x)\frac1r\patl_r\phi\cdot\frac1r\patl_r\Del\psi\\
&+A(B\cdot x)\frac2r\patl_r\psi\cdot
\frac1r\patl_r\Big(\frac1r\patl_r\phi\Big)\\
&-A(B\cdot x)\frac2r\patl_r\psi\cdot\frac1r\patl_r\Del\phi\\
&-x2(A\cdot x)(B\cdot x)\frac1r\patl_r\Big(\frac1r\patl_r\psi\Big)
\cdot\frac1r\patl_r\Big(\frac1r\patl_r\phi\Big)\\
&+x(A\cdot x)(B\cdot x)\frac2r\patl_r\psi\cdot
\frac1r\patl_r\Big\{\frac1r\patl_r\Big(\frac1r\patl_r\phi\Big)\Big\}\\
&-x(A\cdot B)\frac1r\patl_r\phi\cdot\frac1r\patl_r\Del\psi\\
&-x(A\cdot B)\frac1r\patl_r\psi\cdot
\frac1r\patl_r\Big(\frac1r\patl_r\phi\Big)\\
&+x(A\cdot B)\Del\psi\cdot\frac1r\patl_r\Big(\frac1r\patl_r\phi\Big)\\
&-B(A\cdot x)\frac1r\patl_r\psi\cdot
\frac1r\patl_r\Big(\frac1r\patl_r\phi\Big)\\
&+B(A\cdot x)\Del\psi\cdot\frac1r\patl_r\Big(\frac1r\patl_r\phi\Big)\\
&+(B\times x)(B\cdot x)\frac1r\patl_r\Big(\frac1r\patl_r\psi\Big)\cdot
\frac1r\patl_r\Del\psi\\
&-(B\times x)(B\cdot x)\frac2r\patl_r\psi\cdot
\frac1r\patl_r\Big(\frac1r\patl_r\Del\psi\Big),\\
\end{split}\]

\beq\label{12R-NTou}\begin{split}
(\ome\cdot\nab)u=&\Big\{-(A\cdot x)\frac1r\patl_r\Big(\frac1r\patl_r
\phi\Big)\{x\cdot\nab\}-\frac1r\patl_r\phi\{A\cdot\nab\}\\
&+\Del\phi\{A\cdot\nab\}+\frac1r\patl_r\Del\psi\{(B\times x)\cdot\nab\}\Big\}\\
&\Big\{(A\times x)\frac1r\patl_r\phi+(B\cdot x)x\frac1r\patl_r\Big(
\frac1r\patl_r\psi\Big)+B\frac1r\patl_r\psi-B\Del\psi\Big\}\\
\end{split}\eeq
\[\label{12Rou}\begin{split}
=&-(A\times x)(A\cdot x)\frac1r\patl_r\Big(\frac1r\patl_r\phi\Big)\cdot
\frac1r\patl_r\phi\\
&-(A\times x)(A\cdot x)\patl_r\Big(\frac1r\patl_r\phi\Big)\cdot
\patl_r\Big(\frac1r\patl_r\phi\Big)\\
&-2x(A\cdot x)(B\cdot x)\frac1r\patl_r\Big(\frac1r\patl_r\phi\Big)\cdot
\frac1r\patl_r\Big(\frac1r\patl_r\psi\Big)\\
&-x(A\cdot x)(B\cdot x)\patl_r\Big(\frac1r\patl_r\phi\Big)\cdot
\patl_r\Big\{\frac1r\patl_r\Big(\frac1r\patl_r\psi\Big)\Big\}\\
&-B(A\cdot x)\patl_r\Big(\frac1r\patl_r\phi\Big)\cdot
\patl_r\Big(\frac1r\patl_r\psi\Big)\\
&+B(A\cdot x)\patl_r\Big(\frac1r\patl_r\phi\Big)\cdot\patl_r\Del\psi\\
&-(A\times x)(A\cdot x)\frac1r\patl_r\phi\cdot
\frac1r\patl_r\Big(\frac1r\patl_r\phi\Big)\\
&-x(A\cdot B)\frac1r\patl_r\phi\cdot\frac1r\patl_r\Big(\frac1r\patl_r\psi\Big)\\
&-A(B\cdot x)\frac1r\patl_r\phi\cdot\frac1r\patl_r\Big(\frac1r\patl_r\psi\Big)\\
&-x(A\cdot x)(B\cdot x)\frac1r\patl_r\phi\cdot
\frac1r\patl_r\Big\{\frac1r\patl_r\Big(\frac1r\patl_r\psi\Big)\Big\}\\
&-B(A\cdot x)\frac1r\patl_r\phi\cdot\frac1r\patl_r\Big(\frac1r\patl_r\psi\Big)\\
&+B(A\cdot x)\frac1r\patl_r\phi\cdot\frac1r\patl_r\Del\psi\\
&+(A\times x)(A\cdot x)\Del\phi\cdot\frac1r\patl_r\Big(\frac1r\patl_r\phi\Big)\\
&+x(A\cdot B)\Del\phi\cdot\frac1r\patl_r\Big(\frac1r\patl_r\psi\Big)\\
&+A(B\cdot x)\Del\phi\cdot\frac1r\patl_r\Big(\frac1r\patl_r\psi\Big)\\
&+x(A\cdot x)(B\cdot x)\Del\phi\cdot\frac1r\patl_r\Big\{
\frac1r\patl_r\Big(\frac1r\patl_r\psi\Big)\Big\}\\
&+B(A\cdot x)\Del\phi\cdot\frac1r\patl_r\Big(\frac1r\patl_r\psi\Big)\\
&-B(A\cdot x)\Del\phi\cdot\frac1r\patl_r\Del\psi\\
&+(A\times(B\times x))\frac1r\patl_r\Del\psi\cdot\frac1r\patl_r\phi\\
&+(B\times x)(B\cdot x)\frac1r\patl_r\Del\psi\cdot
\frac1r\patl_r\Big(\frac1r\patl_r\psi\Big)\\
\end{split}\]

\[\label{12Rou}\begin{split}
=&(A\times x)(A\cdot x)\frac1r\patl_r\phi\cdot
\frac1r\patl_r\Big(\frac1r\patl_r\phi\Big)\\
&-A(B\cdot x)\frac1r\patl_r\phi\cdot\frac1r\patl_r\Big(\frac1r\patl_r\psi\Big)\\
&+A(B\cdot x)\Del\phi\cdot\frac1r\patl_r\Big(\frac1r\patl_r\psi\Big)\\
&-2x(A\cdot x)(B\cdot x)\frac1r\patl_r\Big(\frac1r\patl_r\phi\Big)\cdot
\frac1r\patl_r\Big(\frac1r\patl_r\psi\Big)\\
&+x(A\cdot x)(B\cdot x)\frac2r\patl_r\phi\cdot\frac1r\patl_r\Big\{
\frac1r\patl_r\Big(\frac1r\patl_r\psi\Big)\Big\}\\
&-x(A\cdot B)\frac1r\patl_r\phi\cdot\frac1r\patl_r\Big(\frac1r\patl_r\psi\Big)\\
&+x(A\cdot B)\Del\phi\cdot\frac1r\patl_r\Big(\frac1r\patl_r\psi\Big)\\
&-x(A\cdot B)\frac1r\patl_r\phi\cdot\frac1r\patl_r\Del\psi  \\
&+B(A\cdot x)\frac2r\patl_r\phi\cdot\frac1r\patl_r\Big(\frac1r\patl_r\psi\Big)\\
&-B(A\cdot x)\frac1r\patl_r\phi\cdot\frac1r\patl_r\Del\psi\\
&+(B\times x)(B\cdot x)\frac1r\patl_r\Del\psi\cdot
\frac1r\patl_r\Big(\frac1r\patl_r\psi\Big),\\
\end{split}\]

\beq\label{12R-NTuou-at}\begin{split}
&(A\times\nab)\cdot\{(u\cdot\nab)\ome-(\ome\cdot\nab)u\}\\
=&(A\times\nab)\cdot\Big\{-2(A\times x)(A\cdot x)\frac1r\patl_r\phi\cdot
\frac1r\patl_r\Big(\frac1r\patl_r\phi\Big)\\
&+A(B\cdot x)\frac1r\patl_r\phi\cdot\frac1r\patl_r\Del\psi\\
&+A(B\cdot x)\frac2r\patl_r\psi\cdot
\frac1r\patl_r\Big(\frac1r\patl_r\phi\Big)\\
&-A(B\cdot x)\frac2r\patl_r\psi\cdot\frac1r\patl_r\Del\phi\\
&+A(B\cdot x)\frac1r\patl_r\phi\cdot\frac1r\patl_r\Big(\frac1r\patl_r\psi\Big)\\
&-A(B\cdot x)\Del\phi\cdot\frac1r\patl_r\Big(\frac1r\patl_r\psi\Big)\\
&+x(A\cdot x)(B\cdot x)\frac2r\patl_r\psi\cdot
\frac1r\patl_r\Big\{\frac1r\patl_r\Big(\frac1r\patl_r\phi\Big)\Big\}\\
&-x(A\cdot x)(B\cdot x)\frac2r\patl_r\phi\cdot\frac1r\patl_r\Big\{
\frac1r\patl_r\Big(\frac1r\patl_r\psi\Big)\Big\}\\
&-x(A\cdot B)\frac1r\patl_r\psi\cdot\frac1r\patl_r\Big(\frac1r\patl_r\phi\Big)\\
&+x(A\cdot B)\Del\psi\cdot\frac1r\patl_r\Big(\frac1r\patl_r\phi\Big)\\
&+x(A\cdot B)\frac1r\patl_r\phi\cdot\frac1r\patl_r\Big(\frac1r\patl_r\psi\Big)\\
&-x(A\cdot B)\Del\phi\cdot\frac1r\patl_r\Big(\frac1r\patl_r\psi\Big)\\
&-B(A\cdot x)\frac1r\patl_r\psi\cdot
\frac1r\patl_r\Big(\frac1r\patl_r\phi\Big)\\
&+B(A\cdot x)\Del\psi\cdot\frac1r\patl_r\Big(\frac1r\patl_r\phi\Big)\\
&-B(A\cdot x)\frac2r\patl_r\phi\cdot\frac1r\patl_r\Big(\frac1r\patl_r\psi\Big)\\
&+B(A\cdot x)\frac1r\patl_r\phi\cdot\frac1r\patl_r\Del\psi\\
&-(B\times x)(B\cdot x)\frac2r\patl_r\psi\cdot
\frac1r\patl_r\Big(\frac1r\patl_r\Del\psi\Big)\Big\}\\
\end{split}\eeq
\[\label{uou-at}\begin{split}
=&-(A\cdot A)(A\cdot x)\frac4r\patl_r\phi\cdot
\frac1r\patl_r\Big(\frac1r\patl_r\phi\Big)\\
&-\{(A\cdot A)r^2-(A\cdot x)^2\}(A\cdot x)\frac2r\patl_r\Big\{\frac1r
\patl_r\phi\cdot\frac1r\patl_r\Big(\frac1r\patl_r\phi\Big)\Big\}\\
&+\{(A\times B)\cdot x\}(A\cdot x)\frac2r\patl_r\psi\cdot
\frac1r\patl_r\Big\{\frac1r\patl_r\Big(\frac1r\patl_r\phi\Big)\Big\}\\
&-\{(A\times B)\cdot x\}(A\cdot x)\frac2r\patl_r\phi\cdot\frac1r\patl_r
\Big\{\frac1r\patl_r\Big(\frac1r\patl_r\psi\Big)\Big\}\\
&+\{(A\times B)\cdot x\}(A\cdot x)\frac1r\patl_r\Big\{\frac1r\patl_r
\psi\cdot\frac1r\patl_r\Big(\frac1r\patl_r\phi\Big)\Big\}\\
&-\{(A\times B)\cdot x\}(A\cdot x)\frac1r\patl_r\Big\{\Del\psi\cdot
\frac1r\patl_r\Big(\frac1r\patl_r\phi\Big)\Big\}\\
&+\{(A\times B)\cdot x\}(A\cdot x)\frac1r\patl_r\Big\{\frac2r\patl_r
\phi\cdot\frac1r\patl_r\Big(\frac1r\patl_r\psi\Big)\Big\}\\
&-\{(A\times B)\cdot x\}(A\cdot x)\frac1r\patl_r\Big\{\frac1r\patl_r
\phi\cdot\frac1r\patl_r\Del\psi\Big\}\\
&-(A\cdot B)(B\cdot x)\frac4r\patl_r\psi\cdot
\frac1r\patl_r\Big(\frac1r\patl_r\Del\psi\Big)\\
&-\{(A\cdot B)(B\cdot x)-(B\cdot B)(A\cdot x)\}\frac2r\patl_r\psi\cdot
\frac1r\patl_r\Big(\frac1r\patl_r\Del\psi\Big)\\
&-\{(A\cdot B)r^2-(A\cdot x)(B\cdot x)\}(B\cdot x)\frac2r\patl_r\Big\{\frac1r
\patl_r\psi\cdot\frac1r\patl_r\Big(\frac1r\patl_r\Del\psi\Big)\Big\}.\\
\end{split}\]

Putting \eqref{12R-NTuu-bt} into \eqref{NS-SR12-phi}, we have
\beq\label{NS12-Rphi}\begin{split}
&\Big\{(A\cdot x)(B\cdot x)\frac1r\patl_r\Big(\frac1r\patl_r\Big)
-(A\cdot B)\{\frac2r\patl_r+r\patl_r\Big(\frac1r\patl_r\Big)\}\Big\}
\{\phi_t-\nu\Del\phi\}\\
=&\{(A\times B)\cdot x\}(A\cdot x)\frac1r\patl_r
\Big(\frac1r\patl_r\phi\cdot\frac1r\patl_r\phi\Big)\\
&+\{(A\cdot B)(B\cdot x)-(B\cdot B)(A\cdot x)\}\frac1r\patl_r\phi\cdot
\frac1r\patl_r\Big(\frac1r\patl_r\psi\Big)\\
&+4(A\cdot B)(B\cdot x)\frac1r\patl_r\phi\cdot
\frac1r\patl_r\Big(\frac1r\patl_r\psi\Big)\\
&+2\{(A\cdot B)r^2-(A\cdot x)(B\cdot x)\}(B\cdot x)\frac1r\patl_r\Big\{
\frac1r\patl_r\phi\cdot\frac1r\patl_r\Big(\frac1r\patl_r\psi\Big)\Big\}\\
&+2(A\cdot B)(B\cdot x)\patl_r\Big(\frac1r
\patl_r\phi\Big)\cdot\patl_r\Big(\frac1r\patl_r\psi\Big)\\
&+\{(A\cdot B)r^2-(A\cdot x)(B\cdot x)\}(B\cdot x)\frac1r\patl_r\Big\{
\patl_r\Big(\frac1r\patl_r\phi\Big)\cdot\patl_r
\Big(\frac1r\patl_r\psi\Big)\Big\}\\
&+\{(A\cdot B)(B\cdot x)-(B\cdot B)(A\cdot x)\}\frac1r\patl_r
\Big\{\frac1r\patl_r\phi\cdot\frac1r\patl_r\psi\Big\}\\
&+2(A\cdot B)(B\cdot x)\frac1r\patl_r\psi\cdot
\frac1r\patl_r\Big(\frac1r\patl_r\phi\Big)\\
&+\{(A\cdot B)r^2-(A\cdot x)(B\cdot x)\}(B\cdot x)\frac1r\patl_r\Big\{
\frac1r\patl_r\psi\cdot\frac1r\patl_r\Big(\frac1r\patl_r\phi\Big)\Big\}\\
&-\{(A\cdot B)(B\cdot x)-(B\cdot B)(A\cdot x)\}\frac1r\patl_r
\Big\{\Del\psi\cdot\frac1r\patl_r\phi\Big\}\\
&-2(A\cdot B)(B\cdot x)\Del\psi\cdot
\frac1r\patl_r\Big(\frac1r\patl_r\phi\Big)\\
&-\{(A\cdot B)r^2-(A\cdot x)(B\cdot x)\}(B\cdot x)\frac1r\patl_r\Big\{
\Del\psi\cdot\frac1r\patl_r\Big(\frac1r\patl_r\phi\Big)\Big\}.\\
\end{split}\eeq

Putting \eqref{12R-NTuou-at} into \eqref{NS-SR12-psi}, we derive
\beq\label{NS12-Rpsi}\begin{split}
&\Big\{(A\cdot x)(B\cdot x)\frac1r\patl_r\Big(\frac1r\patl_r\Big)
-(A\cdot B)\{\frac2r\patl_r+r\patl_r\Big(\frac1r\patl_r\Big)\}\Big\}
\Del\{\psi_t-\nu\Del\psi\}\\
=&-(A\cdot A)(A\cdot x)\frac4r\patl_r\phi\cdot
\frac1r\patl_r\Big(\frac1r\patl_r\phi\Big)\\
&-\{(A\cdot A)r^2-(A\cdot x)^2\}(A\cdot x)\frac2r\patl_r\Big\{\frac1r
\patl_r\phi\cdot\frac1r\patl_r\Big(\frac1r\patl_r\phi\Big)\Big\}\\
&+\{(A\times B)\cdot x\}(A\cdot x)\frac2r\patl_r\psi\cdot
\frac1r\patl_r\Big\{\frac1r\patl_r\Big(\frac1r\patl_r\phi\Big)\Big\}\\
&-\{(A\times B)\cdot x\}(A\cdot x)\frac2r\patl_r\phi\cdot\frac1r\patl_r
\Big\{\frac1r\patl_r\Big(\frac1r\patl_r\psi\Big)\Big\}\\
&+\{(A\times B)\cdot x\}(A\cdot x)\frac1r\patl_r\Big\{\frac1r\patl_r
\psi\cdot\frac1r\patl_r\Big(\frac1r\patl_r\phi\Big)\Big\}\\
&-\{(A\times B)\cdot x\}(A\cdot x)\frac1r\patl_r\Big\{\Del\psi\cdot
\frac1r\patl_r\Big(\frac1r\patl_r\phi\Big)\Big\}\\
&+\{(A\times B)\cdot x\}(A\cdot x)\frac1r\patl_r\Big\{\frac2r\patl_r
\phi\cdot\frac1r\patl_r\Big(\frac1r\patl_r\psi\Big)\Big\}\\
&-\{(A\times B)\cdot x\}(A\cdot x)\frac1r\patl_r\Big\{\frac1r\patl_r
\phi\cdot\frac1r\patl_r\Del\psi\Big\}\\
&-(A\cdot B)(B\cdot x)\frac4r\patl_r\psi\cdot
\frac1r\patl_r\Big(\frac1r\patl_r\Del\psi\Big)\\
&-\{(A\cdot B)(B\cdot x)-(B\cdot B)(A\cdot x)\}\frac2r\patl_r\psi\cdot
\frac1r\patl_r\Big(\frac1r\patl_r\Del\psi\Big)\\
&-\{(A\cdot B)r^2-(A\cdot x)(B\cdot x)\}(B\cdot x)\frac2r\patl_r\Big\{\frac1r
\patl_r\psi\cdot\frac1r\patl_r\Big(\frac1r\patl_r\Del\psi\Big)\Big\}.\\
\end{split}\eeq

\begin{proof}[Proof of Theorem \ref{SR12-AEB-Thm}]
Firstly we consider the case which vectors $A$ and $B$ are linearly
dependent. Without loss of generality, we assume $A=B$. The equations
\eqref{NS12-Rphi} \eqref{NS12-Rpsi} are as follows
\beq\label{12-Rph-AEB}\begin{split}
&\Big\{(A\cdot x)^2\frac1r\patl_r-(A\cdot A)\{2+r\patl_r\}\Big\}
\frac1r\patl_r\{\phi_t-\nu\Del\phi\}\\
=&-(A\cdot x)\Big\{(A\cdot x)^2\frac1r\patl_r-(A\cdot A)\{2+r\patl_r\}\Big\}
\frac2r\patl_r\phi\cdot\frac1r\patl_r\Big(\frac1r\patl_r\psi\Big)\\
&-(A\cdot x)\Big\{(A\cdot x)^2\frac1r\patl_r-(A\cdot A)\{2+r\patl_r\}\Big\}
\patl_r\Big(\frac1r\patl_r\phi\Big)\cdot
\patl_r\Big(\frac1r\patl_r\psi\Big)\\
&-(A\cdot x)\Big\{(A\cdot x)^2\frac1r\patl_r-(A\cdot A)\{2+r\patl_r\}\Big\}
\frac1r\patl_r\psi\cdot\frac1r\patl_r\Big(\frac1r\patl_r\phi\Big)\\
&+(A\cdot x)\Big\{(A\cdot x)^2\frac1r\patl_r-(A\cdot A)\{2+r\patl_r\}\Big\}
\Del\psi\cdot\frac1r\patl_r\Big(\frac1r\patl_r\phi\Big)\\
=&\Big\{(A\cdot x)^2\frac1r\patl_r-(A\cdot A)\{2+r\patl_r\}\Big\}
\frac1r\patl_r\Big((A\cdot x)\cdot\\
&\int^r_0s\Big\{\frac2s\patl_s\psi\cdot\frac1s\patl_s\Big(\frac1s
\patl_s\phi\Big)-\frac2s\patl_s\phi\cdot\frac1s\patl_s\Big(
\frac1s\patl_s\psi\Big)\Big\}ds\Big),\\
\end{split}\eeq
\beq\label{12-Rps-AEB}\begin{split}
&\Big\{(A\cdot x)^2\frac1r\patl_r-(A\cdot A)\{2+r\patl_r\}\Big\}
\frac1r\patl_r\Del\{\psi_t-\nu\Del\psi\}\\
=&\Big\{(A\cdot x)^2\frac1r\patl_r-(A\cdot A)\{2+r\patl_r\}\Big\}
\frac1r\patl_r\Big((A\cdot x)\cdot\\
&\int^r_0s\Big\{\frac2s\patl_s\phi\cdot\frac1s\patl_s\Big(\frac1s
\patl_s\phi\Big)+\frac2s\patl_s\psi\cdot\frac1s\patl_s
\Big(\frac1s\patl_s\Del\psi\Big)\Big\}ds\Big),\\
\end{split}\eeq
where we have used the following facts
\beq\label{ATN-adx}
(A\times\nab)(A\cdot x)=A\times A=0,
\eeq
\beq\label{DATN-RF}\begin{split}
&(A\times\nab)\cdot(A\times\nab)\varphi(r)\\
=&\Big\{(A\cdot x)^2\frac1r\patl_r-(A\cdot A)\{2+r\patl_r\}\Big\}
\frac1r\patl_r\varphi(r)
\end{split}\eeq
for any radial function $\varphi(r)$. The equations \eqref{12-Rph-AEB}
\eqref{12-Rps-AEB} imply that
\beq\label{12-Rph-AEB1}\begin{split}
&\phi_t-\nu\Del\phi\\
=&(A\cdot x)\int^r_0s\Big\{\frac2s\patl_s\psi\cdot\frac1s\patl_s\Big(
\frac1s\patl_s\phi\Big)-\frac2s\patl_s\phi\cdot\frac1s\patl_s\Big(
\frac1s\patl_s\psi\Big)\Big\}ds,\\
\end{split}\eeq
\beq\label{12-Rps-AEB1}\begin{split}
&\Del\{\psi_t-\nu\Del\psi\}\\
=&(A\cdot x)\int^r_0s\Big\{\frac2s\patl_s\phi\cdot\frac1s\patl_s\Big(
\frac1s\patl_s\phi\Big)+\frac2s\patl_s\psi\cdot\frac1s\patl_s
\Big(\frac1s\patl_s\Del\psi\Big)\Big\}ds.\\
\end{split}\eeq

Let us select orthogonal transformation $\rho$ as follows
\beq\label{SR12-OT}
y=\rho x=x\left(\begin{array}{ccc}
0&0&1\\1&0&0\\0&1&0\\
\end{array}\right)=(x_2,x_3,x_1).
\eeq
Then $r^2=y\cdot y=\rho x\cdot\rho x=x\cdot x$.

Applying the orthogonal transformation \eqref{SR12-OT} in the equations
\eqref{12-Rph-AEB1} \eqref{12-Rps-AEB1}, we obtain
\beq\label{12-Rph-AEB1y}\begin{split}
&\phi_t-\nu\Del\phi\\
=&(A\cdot y)\int^r_0s\Big\{\frac2s\patl_s\psi\cdot\frac1s\patl_s\Big(
\frac1s\patl_s\phi\Big)-\frac2s\patl_s\phi\cdot\frac1s\patl_s\Big(
\frac1s\patl_s\psi\Big)\Big\}ds,\\
\end{split}\eeq
\beq\label{12-Rps-AEB1y}\begin{split}
&\Del\{\psi_t-\nu\Del\psi\}\\
=&(A\cdot y)\int^r_0s\Big\{\frac2s\patl_s\phi\cdot\frac1s\patl_s\Big(
\frac1s\patl_s\phi\Big)+\frac2s\patl_s\psi\cdot\frac1s\patl_s
\Big(\frac1s\patl_s\Del\psi\Big)\Big\}ds,\\
\end{split}\eeq
where $y=\rho x$. Employing the equations \eqref{12-Rph-AEB1}
\eqref{12-Rph-AEB1y}, we get
\beq\label{12ph-AEB1yx}\begin{split}
&(A\cdot(\rho x-x))\int^r_0s\Big\{\frac2s\patl_s\psi\cdot\frac1s\patl_s\Big(
\frac1s\patl_s\phi\Big)-\frac2s\patl_s\phi\cdot\frac1s\patl_s\Big(
\frac1s\patl_s\psi\Big)\Big\}ds=0.\\
\end{split}\eeq
Similarly using \eqref{12-Rps-AEB1} \eqref{12-Rps-AEB1y}, we derive
\beq\label{12ps-AEB1yx}\begin{split}
&(A\cdot(\rho x-x))\int^r_0s\Big\{\frac2s\patl_s\phi\cdot\frac1s\patl_s\Big(
\frac1s\patl_s\phi\Big)+\frac2s\patl_s\psi\cdot\frac1s\patl_s
\Big(\frac1s\patl_s\Del\psi\Big)\Big\}ds=0.\\
\end{split}\eeq
Given $r$, thanks $x\in{\mathbb S}^2_r$ is arbitrary, the
equations \eqref{12ph-AEB1yx} and \eqref{12ps-AEB1yx} imply that
\beq\label{12ph-AEB2}\begin{split}
\frac1r\patl_r\psi\cdot\patl_r\Big(\frac1r\patl_r\phi\Big)-
\frac1r\patl_r\phi\cdot\patl_r\Big(\frac1r\patl_r\psi\Big)=0,\\
\end{split}\eeq
\beq\label{12ps-AEB2}\begin{split}
\frac1r\patl_r\phi\cdot\patl_r\Big(\frac1r\patl_r\phi\Big)+
\frac1r\patl_r\psi\cdot\patl_r\Big(\frac1r\patl_r\Del\psi\Big)=0.\\
\end{split}\eeq
Putting \eqref{12ph-AEB2} into \eqref{12-Rph-AEB1} and putting
\eqref{12ps-AEB2} into \eqref{12-Rps-AEB1}, we derive that
\beq\label{12-Rph-AEB3}\begin{split}
&\phi_t-\nu\Del\phi=0,\\
\end{split}\eeq
\beq\label{12-Rps-AEB3}\begin{split}
&\Del\{\psi_t-\nu\Del\psi\}=0.\\
\end{split}\eeq

Let $(\phi,\psi)=\big(\Phi(r),\Psi(r)\big)$ be solution of
\eqref{12ph-AEB2} \eqref{12ps-AEB2}. Then
\[
u=(A\times\nab)\Phi+\{(A\times\nab)\times\nab\}\Psi
\]
satisfies static three dimensional Euler equation
\[\begin{split}
&(u\cdot\nab)u+\nab P=0,\\
&\nab\cdot u=0
\end{split}\]
by all calculations of \eqref{SR12-R3}--\eqref{12ps-AEB2}.

Result (I) is proved.

Provided
\beq\label{12phs-R100}\begin{split}
(\patl_r\phi,\patl_r\psi)=(0,0),
\end{split}\eeq
then equations \eqref{12ph-AEB2}--\eqref{12-Rps-AEB3} are satisfied.
Here the velocity $u=0$. This is trivial.

Provided $(\patl_r\phi,\patl_r\psi)\not=(0,0)$, then the equation
\eqref{12ph-AEB2} means that
\beq\label{12phs-R1}\begin{split}
\phi=h(t)\psi,\\
\end{split}\eeq
where $h(t)$ is any function of $t$. Putting \eqref{12phs-R1} into
\eqref{12ps-AEB2}, we have
\beq\label{12ps-AEB4}\begin{split}
h^2(t)\patl_r\Big(\frac1r\patl_r\psi\Big)+
\patl_r\Big(\frac1r\patl_r\Del\psi\Big)=0.\\
\end{split}\eeq

Provided $\patl_r\phi=0$ and $\patl_r\psi\not=0$, the equation
\eqref{12ph-AEB2} is satisfied. The equation \eqref{12ps-AEB2}
means that
\beq\label{12ps-AEB02}\begin{split}
\Big(\patl_r\phi,\patl_r(\frac1r\patl_r\Del\psi)\Big)=(0,0).\\
\end{split}\eeq

Provided $\patl_r\phi\not=0$ and $\patl_r\psi=0$, the equation
\eqref{12ph-AEB2} is satisfied. The equation \eqref{12ps-AEB2}
means that
\beq\label{12ph-AEB20}\begin{split}
\Big(\patl_r(\frac1r\patl_r\phi),\patl_r\psi\Big)=(0,0).\\
\end{split}\eeq

There exists a unique solution
\beq\label{12phs-S-CS0}\begin{split}
&\phi(t,r)=e^{-\nu\Del t}\phi_0(r),\\
&\psi(t,r)=e^{-\nu\Del t}\psi_0(r), \\
\end{split}\eeq
of equations \eqref{12-Rph-AEB3} \eqref{12-Rps-AEB3} with initial data
\[\begin{split}
&\phi(t,r)|_{t=0}=\phi_0(r),\\
&\psi(t,r)|_{t=0}=\psi_0(r). \\
\end{split}\]

\begin{lemma}\label{SolEq-Vrr}
Let $V=f(t)r^2+g(t)$, $f(t)$ and $g(t)$ be any functions of $t$. Then
$V$ is solution of the following equation
\beq\label{Eq-Vrr}\begin{split}
\patl_r\Big(\frac1r\patl_rV\Big)=0.\\
\end{split}\eeq
Moreover we have
\[
\tilde{V}=e^{-\nu\Del t}\{f(0)r^2+g(0)\}=f(0)r^2+6\nu f(0)t+g(0),
\]
and $\tilde{V}$ is also solution of the equation \eqref{Eq-Vrr}.
\end{lemma}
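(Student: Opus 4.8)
The plan is to treat the two assertions separately, since each reduces to an elementary computation. First I would verify directly that $V=f(t)r^2+g(t)$ solves \eqref{Eq-Vrr}. Because $g(t)$ is independent of the radial variable, $\patl_r V=2f(t)r$, whence $\frac1r\patl_r V=2f(t)$; the latter is again independent of $r$, so $\patl_r\big(\frac1r\patl_r V\big)=0$. This argument uses only that the coefficient of $r^2$ is independent of $r$ and that the remaining term is $r$-independent, a point I will reuse at the end.

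For the second assertion I would compute $e^{-\nu\Del t}\{f(0)r^2+g(0)\}$ explicitly, interpreting $e^{-\nu\Del t}$ as the solution operator of the forward heat equation $\patl_t w-\nu\Del w=0$ (consistent with its use in \eqref{12-Rph-AEB3} \eqref{12-Rps-AEB3} and with the two-dimensional kernel in \eqref{SolPhi-NS1-R2}). Writing $e^{-\nu\Del t}h(x)=(4\pi\nu t)^{-3/2}\int_{\R^3}\exp\{-|y|^2/(4\nu t)\}h(x-y)\,dy$ and expanding $|x-y|^2=|x|^2-2x\cdot y+|y|^2$, the three Gaussian moments are the only inputs needed: the zeroth moment supplies the normalization, the first moment vanishes by oddness, and the second moment contributes $\int_{\R^3}\exp\{-|y|^2/(4\nu t)\}|y|^2\,dy=(4\pi\nu t)^{3/2}\cdot 6\nu t$. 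Collecting terms yields $f(0)r^2+6\nu f(0)t+g(0)$, the claimed formula for $\tilde V$.

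Alternatively, and more cleanly, I would sidestep the moment computation by checking that $W:=f(0)r^2+6\nu f(0)t+g(0)$ satisfies $\patl_t W-\nu\Del W=0$: since $\Del(r^2)=6$ in $\R^3$ one has $\nu\Del W=6\nu f(0)=\patl_t W$, while $W|_{t=0}=f(0)r^2+g(0)$; uniqueness of the polynomially bounded solution of the heat equation then forces $\tilde V=W$. Finally, observing that $\tilde V=f(0)\,r^2+\big(6\nu f(0)t+g(0)\big)$ is again of the form $\hat f(t)r^2+\hat g(t)$ with $\hat f(t)\equiv f(0)$ constant, the first paragraph applies verbatim to show $\tilde V$ solves \eqref{Eq-Vrr}. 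I do not anticipate a genuine obstacle: the only points requiring care are the interpretation of the semigroup as the forward heat flow and the use of the space dimension through $\Del(r^2)=6$ (of which the second-moment identity is the integral counterpart).
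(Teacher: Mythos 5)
Your proposal is correct. The paper states Lemma \ref{SolEq-Vrr} without any proof, treating it as an elementary verification, and your argument supplies exactly the computation being taken for granted: the direct check that $\patl_r\big(\frac1r\patl_rV\big)=0$ for $V=f(t)r^2+g(t)$, the identification of $e^{-\nu\Del t}$ (in the paper's sign convention) with the forward heat flow, and the evaluation $e^{-\nu\Del t}\{f(0)r^2+g(0)\}=f(0)r^2+6\nu f(0)t+g(0)$ via $\Del(r^2)=6$ in $\R^3$ (or equivalently the Gaussian second moment), after which the first computation applies again since $\tilde V$ has the same form $\hat f(t)r^2+\hat g(t)$. Both of your routes to the semigroup formula are sound; the heat-equation check with Tychonoff-class uniqueness is the cleaner one and matches the dimension-dependent constant $6\nu$ that the paper's formula requires.
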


Take $(\phi_0,\psi_0)=\big(\Phi(r),\Psi(r)\big)$. Then
\beq\label{12phs-S-CS}\begin{split}
&\phi(t,r)=e^{-\nu\Del t}\phi_0(r)
=e^{-\nu\Del t}\Phi(r),\\
&\psi(t,r)=e^{-\nu\Del t}\psi_0(r)=e^{-\nu\Del t}\Psi(r). \\
\end{split}\eeq
Since $(\phi,\psi)=(\phi_0,\psi_0)$ satisfies equations
\eqref{12ph-AEB2} \eqref{12ps-AEB2}, this vector satisfies equation
\eqref{12phs-R100} or equations \eqref{12phs-R1} \eqref{12ps-AEB4}
with $h(t)=h(0)$ or \eqref{12ps-AEB02} or \eqref{12ph-AEB20}. By using
Lemma \ref{SolEq-Vrr} and the following fact
\[
(\lam-\Del)e^{-\nu\Del t}=e^{-\nu\Del t}(\lam-\Del),\;\;\lam\;
\mbox{ is constant},
\]
we derive that $\big(\phi(t,r),\psi(t,r)\big)$ defined by
\eqref{12phs-S-CS} also satisfies respective equation of
\eqref{12phs-R100}--\eqref{12ph-AEB20}. Thus $\big(\phi(t,r),\psi(t,r)
\big)$ satisfies equations \eqref{12ph-AEB2} \eqref{12ps-AEB2}. This
means that $\big(\phi(t,r),\psi(t,r)\big)$ is solution of the equations
\eqref{12-Rph-AEB} \eqref{12-Rps-AEB}.

Result (II) is proved.

On the other hand, let $(\phi_0,\psi_0)\not=\big(\Phi(r),\Psi(r)
\big)$ and $(\phi_0,\psi_0)$ be regular enough if it is necessary.
By theory of local well-posedness of Navier-Stokes equations, there
exist $T_{max}>0$ and a unique solution $\big(\phi(t,x),\psi(t,x)\big)
$ of equations \eqref{NS-SR12-phi} \eqref{NS-SR12-psi} with $A=B$
such that
\[\begin{split}
&\phi(t,x)\in C([0,T_{max});H^m),\\
&\psi(t,x)\in C([0,T_{max});H^{m+1}), \\
\end{split}\]
where and $m>0$ large enough.

Assume that there exist $t_n>0$ such that $t_n\rightarrow0$ as $n
\rightarrow\infty$ and $\big(\phi(t_n,x),\psi(t_n,x)\big)$ is radial
function of $x$. Then $\big(\phi(t_n,x),\psi(t_n,x)\big)$ satisfies
equations \eqref{12ph-AEB2} \eqref{12ps-AEB2} for any $t_n$. There
exists a unique solution
\[\begin{split}
&\phi^{\prime}(t,x)=\phi^{\prime}(t,r)=e^{-\nu\Del t}\phi(t_n,x)
=e^{-\nu\Del t}\phi(t_n,r),\;\;\forall t\ge0,\\
&\psi^{\prime}(t,x)=\psi^{\prime}(t,r)=e^{-\nu\Del t}\psi(t_n,x)
=e^{-\nu\Del t}\psi(t_n,r),\;\;\forall t\ge0 \\
\end{split}\]
of equations \eqref{NS-SR12-phi} \eqref{NS-SR12-psi} with initial data
$\big(\phi(t_n,x),\psi(t_n,x)\big)$. This solution $\phi^{\prime}$ is
global and radial. Thanks the uniqueness of solution $\big(\phi(t,x),
\psi(t,x)\big)$, then
\beq\label{12phs-SCS-tn}\begin{split}
&\phi(t_n+t,x)=\phi(t_n+t,r)=e^{-\nu\Del t}\phi(t_n,x)
=e^{-\nu\Del t}\phi(t_n,r),\;\;\forall t\ge0,\\
&\psi(t_n+t,x)=\psi(t_n+t,r)=e^{-\nu\Del t}\psi(t_n,x)
=e^{-\nu\Del t}\psi(t_n,r),\;\;\forall t\ge0. \\
\end{split}\eeq
Therefore this solution $\big(\phi(t,x),\psi(t,x)\big)$ is global with
respect to $t>0$ and radial with respect to $x$ for any $t\ge t_n$.

Letting $n\rightarrow\infty$ and $t_n\rightarrow0$, we derive $(\phi,
\psi)=(\phi_0,\psi_0)$ satisfies equations \eqref{12ph-AEB2}
\eqref{12ps-AEB2} since $\big(\phi(t,x),\psi(t,x)\big)$ is continuous
with respect to $t\ge 0$. This is contradictory.

In summary, provided there exists $t_0\in (0,T_{max})$ such that
solution $\big(\phi(t,x),\psi(t,x)\big)$ is radial with respect to $x$
at $t=t_0$, then $T_{max}=\infty$ and this solution is radial with
respect to $x$ for any $t\ge t_0$. Let $T_r=\min\{t_0\}$. Thus this
solution is radial with respect to $x$ for any $t\ge T_r$, but this
solution is not radial with respect to $x$ for any $t\in(0,T_r)$.
Otherwise this solution can not be radial with respect to $x$ for any
$t\in (0,T_{max})$.

Theorem \ref{SR12-AEB-Thm} is proved.
\end{proof}

\begin{proof}[Proof of Corollary \ref{SC-EE-PBC}]
Now we consider the following equations
\beq\label{12phs-R1S}\begin{split}
\phi=\lam\psi,\\
\end{split}\eeq
\beq\label{12ps-AEB4S}\begin{split}
\lam^2\psi+\Del\psi=0,\\
\end{split}\eeq
where $\lam$ is any real constant. Equations \eqref{12phs-R1S}
\eqref{12ps-AEB4S} are the special case of equations \eqref{12phs-R1}
\eqref{12ps-AEB4}.

Equation \eqref{12ps-AEB4S} can be written
\beq\label{12ps-AA1}\begin{split}
\lam^2(r\psi)+\patl^2_r(r\psi)=0.\\
\end{split}\eeq
There exists solution
\[
\psi=\Psi_{\lam\al\beta}(r)=\al\frac1r\sin(\lam r)
+\beta\frac1r\cos(\lam r)
\]
of equation \eqref{12ps-AA1} for any real constants $\lam, \al, \beta$.

Let $\Phi_{\lam\al\beta}(r)=\lam\Psi_{\lam\al\beta}(r)$. Then $
(\phi,\psi)=\Big(\Phi_{\lam\al\beta}(r),\Psi_{\lam\al\beta}(r)\Big)$
satisfies equations \eqref{12ph-AEB2} \eqref{12ps-AEB2}.

By Theorem \ref{SR12-AEB-Thm}, this corollary is proved.
\end{proof}

\begin{proof}[Proof of Theorem \ref{SR12-AVB-Thm}]
Now we consider the case which vectors $A$ and $B$ are perpendicular
$A\bot B$. The equations \eqref{NS12-Rphi} \eqref{NS12-Rpsi} are
as follows
\beq\label{12-Rph-AVB}\begin{split}
&(B\cdot x)\frac1r\patl_r\Big(\frac1r\patl_r\Big)
\{\phi_t-\nu\Del\phi\}\\
=&\{(A\times B)\cdot x\}\frac1r\patl_r
\Big(\frac1r\patl_r\phi\cdot\frac1r\patl_r\phi\Big)\\
&-(B\cdot B)\frac1r\patl_r\phi\cdot
\frac1r\patl_r\Big(\frac1r\patl_r\psi\Big)\\
&-(B\cdot B)\frac1r\patl_r\Big\{\frac1r\patl_r\phi\cdot
\frac1r\patl_r\psi\Big\}\\
&+(B\cdot B)\frac1r\patl_r\Big\{\frac1r\patl_r\phi\cdot
\Del\psi\Big\}\\
&-(B\cdot x)^2\frac1r\patl_r\Big\{\frac2r\patl_r\phi\cdot
\frac1r\patl_r\Big(\frac1r\patl_r\psi\Big)\Big\}\\
&+(B\cdot x)^2\frac1r\patl_r\Big\{\frac2r\patl_r\psi\cdot
\frac1r\patl_r\Big(\frac1r\patl_r\phi\Big)\Big\},\\
\end{split}\eeq
\beq\label{12-Rps-AVB}\begin{split}
&(B\cdot x)\frac1r\patl_r\Big(\frac1r\patl_r\Big)
\Del\{\psi_t-\nu\Del\psi\}\\
=&-(A\cdot A)\frac4r\patl_r\phi\cdot
\frac1r\patl_r\Big(\frac1r\patl_r\phi\Big)\\
&-\{(A\cdot A)r^2-(A\cdot x)^2\}\frac2r\patl_r\Big\{\frac1r
\patl_r\phi\cdot\frac1r\patl_r\Big(\frac1r\patl_r\phi\Big)\Big\}\\
&+\{(A\times B)\cdot x\}\frac2r\patl_r\psi\cdot
\frac1r\patl_r\Big\{\frac1r\patl_r\Big(\frac1r\patl_r\phi\Big)\Big\}\\
&-\{(A\times B)\cdot x\}\frac2r\patl_r\phi\cdot\frac1r\patl_r
\Big\{\frac1r\patl_r\Big(\frac1r\patl_r\psi\Big)\Big\}\\
&+\{(A\times B)\cdot x\}\frac1r\patl_r\Big\{\frac1r\patl_r
\psi\cdot\frac1r\patl_r\Big(\frac1r\patl_r\phi\Big)\Big\}\\
&-\{(A\times B)\cdot x\}\frac1r\patl_r\Big\{\Del\psi\cdot
\frac1r\patl_r\Big(\frac1r\patl_r\phi\Big)\Big\}\\
&+\{(A\times B)\cdot x\}\frac1r\patl_r\Big\{\frac2r\patl_r
\phi\cdot\frac1r\patl_r\Big(\frac1r\patl_r\psi\Big)\Big\}\\
&-\{(A\times B)\cdot x\}\frac1r\patl_r\Big\{\frac1r\patl_r
\phi\cdot\frac1r\patl_r\Del\psi\Big\}\\
&+(B\cdot B)\frac2r\patl_r\psi\cdot
\frac1r\patl_r\Big(\frac1r\patl_r\Del\psi\Big)\\
&+(B\cdot x)^2\frac2r\patl_r\Big\{\frac1r
\patl_r\psi\cdot\frac1r\patl_r\Big(\frac1r\patl_r\Del\psi\Big)\Big\}.\\
\end{split}\eeq

Choose the orthogonal transformation $\rho_b$ which rotation axis is
vector $B$, such that
\beq\label{12-RoB}\begin{split}
B\cdot(\rho_bx)=(\rho_b^tB)\cdot x=B\cdot x,
\end{split}\eeq
where $\rho_b^t$ is the adjoint operator of $\rho_b$. For example
\beq\label{12-RoB-pi}\begin{split}
y=\rho_bx=xM_b\left(\begin{array}{ccc}
0&1&0\\-1&0&0\\0&0&1\\\end{array}\right)M_b^t,
\end{split}\eeq
\beq\label{12-RoB-Mb}\begin{split}
M_b=\left(\begin{array}{ccc}
0&-\frac{b_2^2+b_3^2}{|B|^2}&\frac{b_1}{|B|}\\
\frac{b_3}{|B|}&\frac{b_1b_2}{|B|^2}&\frac{b_2}{|B|}\\
-\frac{b_2}{|B|}&\frac{b_1b_3}{|B|^2}&\frac{b_3}{|B|}\\
\end{array}\right),\;\;M_b^t=\left(\begin{array}{ccc}
0&\frac{b_3}{|B|}&-\frac{b_2}{|B|}\\
-\frac{b_2^2+b_3^2}{|B|^2}&\frac{b_1b_2}{|B|^2}&\frac{b_1b_3}{|B|^2}\\
\frac{b_1}{|B|}&\frac{b_2}{|B|}&\frac{b_3}{|B|}\\
\end{array}\right).
\end{split}\eeq

Applying the orthogonal transformation $y=\rho_bx$ in the equation
\eqref{12-Rph-AVB} \eqref{12-Rps-AVB}, we obtain
\beq\label{12-Rph-AVBy}\begin{split}
&(B\cdot y)\frac1r\patl_r\Big(\frac1r\patl_r\Big)
\{\phi_t-\nu\Del\phi\}\\
=&\{(A\times B)\cdot y\}\frac1r\patl_r
\Big(\frac1r\patl_r\phi\cdot\frac1r\patl_r\phi\Big)\\
&-(B\cdot B)\frac1r\patl_r\phi\cdot
\frac1r\patl_r\Big(\frac1r\patl_r\psi\Big)\\
&-(B\cdot B)\frac1r\patl_r\Big\{\frac1r\patl_r\phi\cdot
\frac1r\patl_r\psi\Big\}\\
&+(B\cdot B)\frac1r\patl_r\Big\{\frac1r\patl_r\phi\cdot
\Del\psi\Big\}\\
&-(B\cdot y)^2\frac1r\patl_r\Big\{\frac2r\patl_r\phi\cdot
\frac1r\patl_r\Big(\frac1r\patl_r\psi\Big)\Big\}\\
&+(B\cdot y)^2\frac1r\patl_r\Big\{\frac2r\patl_r\psi\cdot
\frac1r\patl_r\Big(\frac1r\patl_r\phi\Big)\Big\},\\
\end{split}\eeq
\beq\label{12-Rps-AVBy}\begin{split}
&(B\cdot y)\frac1r\patl_r\Big(\frac1r\patl_r\Big)
\Del\{\psi_t-\nu\Del\psi\}\\
=&-(A\cdot A)\frac4r\patl_r\phi\cdot
\frac1r\patl_r\Big(\frac1r\patl_r\phi\Big)\\
&-\{(A\cdot A)r^2-(A\cdot y)^2\}\frac2r\patl_r\Big\{\frac1r
\patl_r\phi\cdot\frac1r\patl_r\Big(\frac1r\patl_r\phi\Big)\Big\}\\
&+\{(A\times B)\cdot y\}\frac2r\patl_r\psi\cdot
\frac1r\patl_r\Big\{\frac1r\patl_r\Big(\frac1r\patl_r\phi\Big)\Big\}\\
&-\{(A\times B)\cdot y\}\frac2r\patl_r\phi\cdot\frac1r\patl_r
\Big\{\frac1r\patl_r\Big(\frac1r\patl_r\psi\Big)\Big\}\\
&+\{(A\times B)\cdot y\}\frac1r\patl_r\Big\{\frac1r\patl_r
\psi\cdot\frac1r\patl_r\Big(\frac1r\patl_r\phi\Big)\Big\}\\
&-\{(A\times B)\cdot y\}\frac1r\patl_r\Big\{\Del\psi\cdot
\frac1r\patl_r\Big(\frac1r\patl_r\phi\Big)\Big\}\\
&+\{(A\times B)\cdot y\}\frac1r\patl_r\Big\{\frac2r\patl_r
\phi\cdot\frac1r\patl_r\Big(\frac1r\patl_r\psi\Big)\Big\}\\
&-\{(A\times B)\cdot y\}\frac1r\patl_r\Big\{\frac1r\patl_r
\phi\cdot\frac1r\patl_r\Del\psi\Big\}\\
&+(B\cdot B)\frac2r\patl_r\psi\cdot
\frac1r\patl_r\Big(\frac1r\patl_r\Del\psi\Big)\\
&+(B\cdot y)^2\frac2r\patl_r\Big\{\frac1r
\patl_r\psi\cdot\frac1r\patl_r\Big(\frac1r\patl_r\Del\psi\Big)\Big\}.\\
\end{split}\eeq

Putting \eqref{12-Rph-AVB} \eqref{12-RoB} \eqref{12-Rph-AVBy} together,
we get
\beq\label{12-Rph-AVByx}\begin{split}
\{(A\times B)\cdot(x-\rho_bx)\}\frac1r\patl_r
\Big(\frac1r\patl_r\phi\cdot\frac1r\patl_r\phi\Big)=0.\\
\end{split}\eeq
Given $r$, thanks $x\in{\mathbb S}^2_r$ is arbitrary, we have
\beq\label{12ph-AVB1}\begin{split}
\patl_r\Big(\frac1r\patl_r\phi\cdot\frac1r\patl_r\phi\Big)=0.\\
\end{split}\eeq
This equation \eqref{12ph-AVB1} implies that
\beq\label{12ph-AVB2}\begin{split}
&\phi=f_2r^2+f_0,\\
\end{split}\eeq
where $f_0$ and $f_2$ are any functions of $t$.

Provided $f_2=0$, putting \eqref{12ph-AVB2} into \eqref{12-Rph-AVB},
the equation \eqref{12-Rph-AVB} is satisfied. Putting \eqref{12ph-AVB2}
into \eqref{12-Rps-AVB}, we obtain that
\beq\label{12ps-AVB-P0}\begin{split}
&(B\cdot x)\frac1r\patl_r\Big(\frac1r\patl_r\Big)
\Del\{\psi_t-\nu\Del\psi\}\\
=&(B\cdot B)\frac2r\patl_r\psi\cdot
\frac1r\patl_r\Big(\frac1r\patl_r\Del\psi\Big)\\
&+(B\cdot x)^2\frac2r\patl_r\Big\{\frac1r
\patl_r\psi\cdot\frac1r\patl_r\Big(\frac1r\patl_r\Del\psi\Big)\Big\}.\\
\end{split}\eeq

Applying the orthogonal transformation $y=\rho x$ defined by
\eqref{SR12-OT} in the equation \eqref{12ps-AVB-P0}, we have
\beq\label{12ps-AVB-P0y}\begin{split}
&(B\cdot y)\frac1r\patl_r\Big(\frac1r\patl_r\Big)
\Del\{\psi_t-\nu\Del\psi\}\\
=&(B\cdot B)\frac2r\patl_r\psi\cdot
\frac1r\patl_r\Big(\frac1r\patl_r\Del\psi\Big)\\
&+(B\cdot y)^2\frac2r\patl_r\Big\{\frac1r\patl_r\psi\cdot
\frac1r\patl_r\Big(\frac1r\patl_r\Del\psi\Big)\Big\}.\\
\end{split}\eeq
Solving difference of \eqref{12ps-AVB-P0} and \eqref{12ps-AVB-P0y},
we derive
\beq\label{12ps-AVB-P0yx}\begin{split}
&\frac1r\patl_r\Big(\frac1r\patl_r\Big)\Del\{\psi_t-\nu\Del\psi\}\\
=&\{B\cdot(x+\rho x)\}\frac2r\patl_r\Big\{\frac1r\patl_r\psi\cdot
\frac1r\patl_r\Big(\frac1r\patl_r\Del\psi\Big)\Big\}.\\
\end{split}\eeq
Firstly \eqref{12ps-AVB-P0yx} is satisfied provided $x\in\R^3-\{x|\rho
x=x\}$. Finally for $x\in\{x|\rho x=x\}$, selecting $x_n\in\R^3-\{x|
\rho x=x\}$ such that $x_n\rightarrow x$ as $n\rightarrow\infty$, we
can prove that \eqref{12ps-AVB-P0yx} is also satisfied by $n\rightarrow
\infty$.

Let us select another orthogonal transformation $O_r$ as follows
\beq\label{SR12-OrT}
z=O_r x=x\left(\begin{array}{ccc}
0&1&0\\0&0&1\\1&0&0\\
\end{array}\right)=(x_3,x_1,x_2).
\eeq
Applying the orthogonal transformation $z=O_rx$ in the equation
\eqref{12ps-AVB-P0}, by the same arguments as in the proof of
\eqref{12ps-AVB-P0yx}, we have
\beq\label{12ps-AVB-P0zx}\begin{split}
&\frac1r\patl_r\Big(\frac1r\patl_r\Big)\Del\{\psi_t-\nu\Del\psi\}\\
=&\{B\cdot(x+O_rx)\}\frac2r\patl_r\Big\{\frac1r\patl_r\psi\cdot
\frac1r\patl_r\Big(\frac1r\patl_r\Del\psi\Big)\Big\}.\\
\end{split}\eeq

Solving difference of \eqref{12ps-AVB-P0zx} and \eqref{12ps-AVB-P0yx},
we derive
\beq\label{12ps-AVB-P0yz}\begin{split}
\{B\cdot(\rho x-O_rx)\}\patl_r\Big\{\frac1r\patl_r\psi\cdot
\frac1r\patl_r\Big(\frac1r\patl_r\Del\psi\Big)\Big\}=0.\\
\end{split}\eeq
Given $r$, thanks $x\in{\mathbb S}^2_r$ is arbitrary, we have
\beq\label{12ps-AVB-P01}\begin{split}
\patl_r\Big\{\frac1r\patl_r\psi\cdot
\frac1r\patl_r\Big(\frac1r\patl_r\Del\psi\Big)\Big\}=0.\\
\end{split}\eeq
Inserting \eqref{12ps-AVB-P01} into \eqref{12ps-AVB-P0zx}, we obtain
\beq\label{12ps-AVB-P02}\begin{split}
\patl_r\Big(\frac1r\patl_r\Big)\Del\{\psi_t-\nu\Del\psi\}=0.\\
\end{split}\eeq
Putting \eqref{12ps-AVB-P01} \eqref{12ps-AVB-P02} into
\eqref{12ps-AVB-P0}, we get
\beq\label{12ps-AVB-P03}\begin{split}
\patl_r\psi\cdot\patl_r\Big(\frac1r\patl_r\Del\psi\Big)=0.\\
\end{split}\eeq

If $\patl_r\psi=0$, then equations \eqref{12-Rph-AVB} \eqref{12-Rps-AVB}
are satisfied by $(\phi,\psi)$ which is the solution of $(\patl_r\phi,
\patl_r\psi)=0$. In this case, the corresponding velocity $u=0$. It is
trivial.

Now provided $\patl_r\psi\not=0$, then the equation \eqref{12ps-AVB-P03}
implies that $\patl_r\Big(\frac1r\patl_r\Del\psi\Big)=0$ and
\beq\label{12ps-AVB2}\begin{split}
&\Del\psi=20g_4r^2+6g_2,\\
&\psi=g_4r^4+g_2r^2+g_0,\\
\end{split}\eeq
where $g_0$, $g_2$ and $g_4$ are any functions of $t$.

Putting \eqref{12ph-AVB2} \eqref{12ps-AVB2} into \eqref{12-Rph-AVB}
\eqref{12-Rps-AVB}, the equations \eqref{12-Rph-AVB} and
\eqref{12-Rps-AVB} are satisfied provided $f_2=0$.

Provided $f_2\not=0$, putting \eqref{12-Rps-AVB} \eqref{12-RoB}
\eqref{12-Rps-AVBy} \eqref{12ph-AVB2} together, we derive
\beq\label{12-Rps-AVByx}\begin{split}
&\{(A\times B)\cdot(x-\rho_bx)\}\frac1r\patl_r\Big\{\frac1r\patl_r
\phi\cdot\frac1r\patl_r\Del\psi\Big\}=0.\\
\end{split}\eeq
Given $r$, since $x\in{\mathbb S}^2_r$ is arbitrary, we have
\beq\label{12ps-AVB1}\begin{split}
&\patl_r\Big\{\frac1r\patl_r\phi\cdot\frac1r\patl_r\Del\psi\Big\}=0.\\
\end{split}\eeq
We can also derive \eqref{12ps-AVB2} from the equation \eqref{12ps-AVB1}.

Putting \eqref{12ph-AVB2} \eqref{12ps-AVB2} into \eqref{12-Rph-AVB}
\eqref{12-Rps-AVB}, the equations \eqref{12-Rph-AVB} and
\eqref{12-Rps-AVB} are satisfied provided $f_2g_4=0$.

The velocity $u$ corresponding to $(\phi,\psi)$ defined by
\eqref{12ph-AVB2}\eqref{12ps-AVB2} is as follows
\beq\label{12-u-phs5}\begin{split}
u(t,x)=&(A\times\nab)\phi+\{(B\times\nab)\times\nab\}\psi\\
=&2f_2(A\times x)+8g_4(B\cdot x)x-B(16g_4r^2+4g_2).\\
\end{split}\eeq
It is obvious that
\[
\int_{\R^3}|u(t,x)|^2dx=\infty,\;\;\forall t\ge 0.
\]

On the other hand, provided that at least one of \eqref{12ph-AVB2} and
\eqref{12ps-AVB2} is not satisfied, or $f_2g_4\not=0$ although
\eqref{12ph-AVB2} and \eqref{12ps-AVB2} are satisfied, then the
equations \eqref{NS12-Rphi} \eqref{NS12-Rpsi} can not be satisfied by
any radial symmetry functions $\phi$ and $\psi$.

In summary, Theorem \ref{SR12-AVB-Thm} is proved.
\end{proof}

\section{(1,1)-Symplectic Representation and \\Radial Symmetry Breaking
in ${\mathbb{R}}^3$}
\setcounter{equation}{0}

In this section, we assume that the velocity vector $u$ holds the
following (1,1)-symplectic representation
\beq\label{Sym-Rep-11-R3}\begin{split}
u(t,x)=&\{A\times\nab\}\phi(t,x)+\{B\times\nab\}\psi(t,x),\\
\end{split}\eeq
where vectors $A=(a_1,a_2,a_3)\in\R^3-\{0\}$ and $B=(b_1,b_2,b_3)\in
\R^3-\{0\}$ are linearly independent.

Let
\beq\label{Def-SR11-ome}\begin{split}
\ome(t,x)=&\nab\times u(t,x)\\
=&-\{(A\times\nab)\times\nab\}\phi(t,x)
-\{(B\times\nab)\times\nab\}\psi(t,x).\\
\end{split}\eeq
Here vectors $(A\times\nab)\times\nab=(A\cdot\nab)\nab-A\Del$ and
$(B\times\nab)\times\nab=(B\cdot\nab)\nab-B\Del$.

Taking curl with equation \eqref{NS1}, we have
\beq\label{NS-SR11-ome}
\ome_t-\nu\Del\ome+(u\cdot\nab)\ome-(\ome\cdot\nab)u=0.
\eeq

Thanks the following observations
\beq\label{SR11-phi}\begin{split}
(B\times\nab)\cdot \ome(t,x)&=\Big(\{(A\times\nab)\times(B\times\nab)\}
\cdot\nab\Big)\phi(t,x)\\
&=(A\times B)\cdot\nab\Del\phi(t,x),\\
\end{split}\eeq
\beq\label{SR11-psi}\begin{split}
(A\times\nab)\cdot \ome(t,x)&=-\Big(\{(A\times\nab)\times(B\times\nab)\}
\cdot\nab\Big)\psi(t,x)\\
&=-(A\times B)\cdot\nab\Del\psi(t,x),\\
\end{split}\eeq
taking scalar product of equation \eqref{NS-SR11-ome} with
$B\times\nab$, we have
\beq\label{NS-SR11-phi}
(A\times B)\cdot\nab\Del\{\phi_t-\nu\Del\phi\}
+(B\times\nab)\cdot\{(u\cdot\nab)\ome-(\ome\cdot\nab)u\}=0.
\eeq
And taking scalar product of equation \eqref{NS-SR11-ome} with
$A\times\nab$, we derive
\beq\label{NS-SR11-psi}
(A\times B)\cdot\nab\Del\{\psi_t-\nu\Del\psi\}
-(A\times\nab)\cdot\{(u\cdot\nab)\ome-(\ome\cdot\nab)u\}=0.
\eeq

Now we assume that $\phi$ and $\psi$ are radial symmetric functions
with respect to space variable $x\in\R^3$. It is that $\phi(t,x)=
\phi(t,r)$, $\psi(t,x)=\psi(t,r)$  and $r^2=x_1^2+x_2^2+x_3^2$.
Then we have
\beq\label{SR11-uR3R}\begin{split}
u(t,x)=&(A\times x)\frac1r\patl_r\phi
+(B\times x)\frac1r\patl_r\psi,\\
\end{split}\eeq
\beq\label{SR11-oR3R}\begin{split}
\ome(t,x)
=&A\Big(\Del\phi-\frac1r\patl_r\phi\Big)-x(A\cdot x)\frac1r\patl_r
\Big(\frac1r\patl_r\phi\Big)\\
&+B\Big(\Del\psi-\frac1r\patl_r\psi\Big)-x(B\cdot x)\frac1r\patl_r
\Big(\frac1r\patl_r\psi\Big),\\
\end{split}\eeq
\beq\label{NT-SR11-uo}\begin{split}
(u\cdot\nab)\ome=&\Big(\frac1r\patl_r\phi(A\times x)\cdot\nab
+\frac1r\patl_r\psi(B\times x)\cdot\nab\Big)\\
&\Big\{A\Big(\Del\phi-\frac1r\patl_r\phi\Big)-x(A\cdot x)
\frac1r\patl_r\Big(\frac1r\patl_r\phi\Big)\\
&+B\Big(\Del\psi-\frac1r\patl_r\psi\Big)-x(B\cdot x)
\frac1r\patl_r\Big(\frac1r\patl_r\psi\Big)\Big\}\\
=&-(A\times x)(A\cdot x)\frac1r\patl_r\phi\cdot
\frac1r\patl_r\Big(\frac1r\patl_r\phi\Big)\\
&-(A\times x)(B\cdot x)\frac1r\patl_r\phi\cdot
\frac1r\patl_r\Big(\frac1r\patl_r\psi\Big)\\
&+x\{(A\times B)\cdot x\}\frac1r\patl_r\phi\cdot
\frac1r\patl_r\Big(\frac1r\patl_r\psi\Big)\\
&-x\{(A\times B)\cdot x\}\frac1r\patl_r\psi\cdot
\frac1r\patl_r\Big(\frac1r\patl_r\phi\Big)\\
&-(B\times x)(A\cdot x)\frac1r\patl_r\psi\cdot
\frac1r\patl_r\Big(\frac1r\patl_r\phi\Big)\\
&-(B\times x)(B\cdot x)\frac1r\patl_r\psi\cdot
\frac1r\patl_r\Big(\frac1r\patl_r\psi\Big),\\
\end{split}\eeq
\beq\label{NT-SR11-ou}\begin{split}
(\ome\cdot\nab)u
=&\Big\{\Big(\Del\phi-\frac1r\patl_r\phi\Big)A\cdot\nab-(A\cdot x)\frac1r\patl_r
\Big(\frac1r\patl_r\phi\Big)x\cdot\nab\\
&+\Big(\Del\psi-\frac1r\patl_r\psi\Big)B\cdot\nab-(B\cdot x)\frac1r\patl_r
\Big(\frac1r\patl_r\psi\Big)x\cdot\nab\Big\}\\
&\Big\{(A\times x)\frac1r\patl_r\phi
+(B\times x)\frac1r\patl_r\psi\Big\}\\
=&(A\times x)(A\cdot x)\frac1r\patl_r\Big(\frac1r\patl_r\phi\Big)\cdot
\Big(\Del\phi-\frac1r\patl_r\phi\Big)\\
&+(B\times x)(A\cdot x)\frac1r\patl_r\Big(\frac1r\patl_r\psi\Big)\cdot
\Big(\Del\phi-\frac1r\patl_r\phi\Big)\\
&-(A\times B)\frac1r\patl_r\psi\cdot
\Big(\Del\phi-\frac1r\patl_r\phi\Big)\\
&-(A\times x)(A\cdot x)\patl_r\Big(\frac1r\patl_r\phi\Big)
\cdot\patl_r\Big(\frac1r\patl_r\phi\Big)\\
&-(A\times x)(A\cdot x)\frac1r\patl_r\phi
\cdot\frac1r\patl_r\Big(\frac1r\patl_r\phi\Big)\\
&-(B\times x)(A\cdot x)\patl_r\Big(\frac1r\patl_r\psi\Big)
\cdot\patl_r\Big(\frac1r\patl_r\phi\Big)\\
&-(B\times x)(A\cdot x)\frac1r\patl_r\psi
\cdot\frac1r\patl_r\Big(\frac1r\patl_r\phi\Big)\\
&+(A\times x)(B\cdot x)\frac1r\patl_r\Big(\frac1r\patl_r\phi\Big)\cdot
\Big(\Del\psi-\frac1r\patl_r\psi\Big)\\
&+(A\times B)\frac1r\patl_r\phi\cdot
\Big(\Del\psi-\frac1r\patl_r\psi\Big)\\
&+(B\times x)(B\cdot x)\frac1r\patl_r\Big(\frac1r\patl_r\psi\Big)\cdot
\Big(\Del\psi-\frac1r\patl_r\psi\Big)\\
&-(A\times x)(B\cdot x)\patl_r\Big(\frac1r\patl_r\phi\Big)
\cdot\patl_r\Big(\frac1r\patl_r\psi\Big)\\
&-(A\times x)(B\cdot x)\frac1r\patl_r\phi
\cdot\frac1r\patl_r\Big(\frac1r\patl_r\psi\Big)\\
&-(B\times x)(B\cdot x)\patl_r\Big(\frac1r\patl_r\psi\Big)
\cdot\patl_r\Big(\frac1r\patl_r\psi\Big)\\
&-(B\times x)(B\cdot x)\frac1r\patl_r\psi
\cdot\frac1r\patl_r\Big(\frac1r\patl_r\psi\Big)\\
\end{split}\eeq
\[\begin{split}
=&(A\times x)(A\cdot x)\frac1r\patl_r\phi\cdot
\frac1r\patl_r\Big(\frac1r\patl_r\phi\Big)\\
&-(A\times x)(B\cdot x)\frac1r\patl_r\phi
\cdot\frac1r\patl_r\Big(\frac1r\patl_r\psi\Big)\\
&+(A\times x)(B\cdot x)\frac2r\patl_r\psi\cdot
\frac1r\patl_r\Big(\frac1r\patl_r\phi\Big)\\
&+(A\times B)\patl_r\phi\cdot\patl_r\Big(\frac1r\patl_r\psi\Big)\\
&-(A\times B)\patl_r\psi\cdot\patl_r\Big(\frac1r\patl_r\phi\Big)\\
&+(B\times x)(A\cdot x)\frac2r\patl_r\phi\cdot
\frac1r\patl_r\Big(\frac1r\patl_r\psi\Big)\\
&-(B\times x)(A\cdot x)\frac1r\patl_r\psi\cdot
\frac1r\patl_r\Big(\frac1r\patl_r\phi\Big)\\
&+(B\times x)(B\cdot x)\frac1r\patl_r\psi\cdot
\frac1r\patl_r\Big(\frac1r\patl_r\psi\Big),\\
\end{split}\]

\beq\label{NT-SR11-uoou}\begin{split}
&(u\cdot\nab)\ome-(\ome\cdot\nab)u\\
=&-2(A\times x)(A\cdot x)\frac1r\patl_r\phi\cdot
\frac1r\patl_r\Big(\frac1r\patl_r\phi\Big)\\
&-2(A\times x)(B\cdot x)\frac1r\patl_r\psi\cdot
\frac1r\patl_r\Big(\frac1r\patl_r\phi\Big)\\
&+x\{(A\times B)\cdot x\}\frac1r\patl_r\phi\cdot
\frac1r\patl_r\Big(\frac1r\patl_r\psi\Big)\\
&-(A\times B)\patl_r\phi\cdot\patl_r\Big(\frac1r\patl_r\psi\Big)\\
&+(A\times B)\patl_r\psi\cdot\patl_r\Big(\frac1r\patl_r\phi\Big)\\
&-x\{(A\times B)\cdot x\}\frac1r\patl_r\psi\cdot
\frac1r\patl_r\Big(\frac1r\patl_r\phi\Big)\\
&-2(B\times x)(B\cdot x)\frac1r\patl_r\psi\cdot
\frac1r\patl_r\Big(\frac1r\patl_r\psi\Big)\\
&-2(B\times x)(A\cdot x)\frac1r\patl_r\phi\cdot
\frac1r\patl_r\Big(\frac1r\patl_r\psi\Big),\\
\end{split}\eeq

\beq\label{NT-SR11-bt}\begin{split}
&(B\times\nab)\cdot\{(u\cdot\nab)\ome-(\ome\cdot\nab)u\}\\
=&(B\times\nab)\cdot\Big\{-2(A\times x)(A\cdot x)\frac1r\patl_r\phi\cdot
\frac1r\patl_r\Big(\frac1r\patl_r\phi\Big)\\
&-2(A\times x)(B\cdot x)\frac1r\patl_r\psi\cdot
\frac1r\patl_r\Big(\frac1r\patl_r\phi\Big)\\
&+x\{(A\times B)\cdot x)\}\frac1r\patl_r\phi\cdot
\frac1r\patl_r\Big(\frac1r\patl_r\psi\Big)\\
&-(A\times B)\patl_r\phi\cdot\patl_r\Big(\frac1r\patl_r\psi\Big)\\
&+(A\times B)\patl_r\psi\cdot\patl_r\Big(\frac1r\patl_r\phi\Big)\\
&-x\{(A\times B)\cdot x\}\frac1r\patl_r\psi\cdot
\frac1r\patl_r\Big(\frac1r\patl_r\phi\Big)\\
&-2(B\times x)(B\cdot x)\frac1r\patl_r\psi\cdot
\frac1r\patl_r\Big(\frac1r\patl_r\psi\Big)\\
&-2(B\times x)(A\cdot x)\frac1r\patl_r\phi\cdot
\frac1r\patl_r\Big(\frac1r\patl_r\psi\Big)\Big\}\\
=&-4(A\cdot B)(A\cdot x)\frac1r\patl_r\phi\cdot
\frac1r\patl_r\Big(\frac1r\patl_r\phi\Big)\\
&+2\{(A\times B)\times A\}\cdot x\frac1r\patl_r\phi\cdot
\frac1r\patl_r\Big(\frac1r\patl_r\phi\Big)\\
&-2\{(A\cdot B)r^2-(A\cdot x)(B\cdot x)\}(A\cdot x)\frac1r\patl_r\Big\{\frac1r
\patl_r\phi\cdot\frac1r\patl_r\Big(\frac1r\patl_r\phi\Big)\Big\}\\
&-4(A\cdot B)(B\cdot x)\frac1r\patl_r\psi\cdot
\frac1r\patl_r\Big(\frac1r\patl_r\phi\Big)\\
&-2\{(A\cdot B)r^2-(A\cdot x)(B\cdot x)\}(B\cdot x)\frac1r\patl_r\Big\{\frac1r
\patl_r\psi\cdot\frac1r\patl_r\Big(\frac1r\patl_r\phi\Big)\Big\}\\
&+\{(A\times B)\times B\}\cdot x\frac1r\patl_r\phi\cdot
\frac1r\patl_r\Big(\frac1r\patl_r\psi\Big)\\
&-\{(A\times B)\times B\}\cdot x\frac1r\patl_r\Big\{\patl_r\phi\cdot
\patl_r\Big(\frac1r\patl_r\psi\Big)\Big\}\\
&+\{(A\times B)\times B\}\cdot x\frac1r\patl_r\Big\{\patl_r\psi\cdot
\patl_r\Big(\frac1r\patl_r\phi\Big)\Big\}\\
&+\{(A\times B)\times B\}\cdot x\frac1r\patl_r\psi\cdot
\frac1r\patl_r\Big(\frac1r\patl_r\phi\Big)\\
&-4|B|^2(B\cdot x)\frac1r\patl_r\psi\cdot
\frac1r\patl_r\Big(\frac1r\patl_r\psi\Big)\\
&-2\{|B|^2r^2-(B\cdot x)^2\}(B\cdot x)\frac1r\patl_r\Big\{\frac1r
\patl_r\psi\cdot\frac1r\patl_r\Big(\frac1r\patl_r\psi\Big\}\\
&-4|B|^2(A\cdot x)\frac1r\patl_r\phi\cdot
\frac1r\patl_r\Big(\frac1r\patl_r\psi\Big)\\
&-2\{|B|^2r^2-(B\cdot x)^2\}(A\cdot x)\frac1r\patl_r\Big\{\frac1r
\patl_r\phi\cdot\frac1r\patl_r\Big(\frac1r\patl_r\psi\Big)\Big\},\\
\end{split}\eeq

\beq\label{NT-SR11-at}\begin{split}
&(A\times\nab)\cdot\{(u\cdot\nab)\ome-(\ome\cdot\nab)u\}\\
=&(A\times\nab)\cdot\Big\{-2(A\times x)(A\cdot x)\frac1r\patl_r\phi\cdot
\frac1r\patl_r\Big(\frac1r\patl_r\phi\Big)\\
&-2(A\times x)(B\cdot x)\frac1r\patl_r\psi\cdot
\frac1r\patl_r\Big(\frac1r\patl_r\phi\Big)\\
&+x\{(A\times B)\cdot x\}\frac1r\patl_r\phi\cdot
\frac1r\patl_r\Big(\frac1r\patl_r\psi\Big)\\
&-(A\times B)\patl_r\phi\cdot\patl_r\Big(\frac1r\patl_r\psi\Big)\\
&+(A\times B)\patl_r\psi\cdot\patl_r\Big(\frac1r\patl_r\phi\Big)\\
&-x\{(A\times B)\cdot x\}\frac1r\patl_r\psi\cdot
\frac1r\patl_r\Big(\frac1r\patl_r\phi\Big)\\
&-2(B\times x)(B\cdot x)\frac1r\patl_r\psi\cdot
\frac1r\patl_r\Big(\frac1r\patl_r\psi\Big)\\
&-2(B\times x)(A\cdot x)\frac1r\patl_r\phi\cdot
\frac1r\patl_r\Big(\frac1r\patl_r\psi\Big)\Big\}\\
=&-4|A|^2(A\cdot x)\frac1r\patl_r\phi\cdot
\frac1r\patl_r\Big(\frac1r\patl_r\phi\Big)\\
&-2\{|A|^2r^2-(A\cdot x)^2\}(A\cdot x)\frac1r\patl_r\Big\{\frac1r
\patl_r\phi\cdot\frac1r\patl_r\Big(\frac1r\patl_r\phi\Big)\Big\}\\
&-4|A|^2(B\cdot x)\frac1r\patl_r\psi\cdot
\frac1r\patl_r\Big(\frac1r\patl_r\phi\Big)\\
&-2\{|A|^2r^2-(A\cdot x)^2\}(B\cdot x)\frac1r\patl_r\Big\{\frac1r
\patl_r\psi\cdot\frac1r\patl_r\Big(\frac1r\patl_r\phi\Big)\Big\}\\
&-\{(A\times B)\times A\}\cdot x\frac1r\patl_r\phi\cdot
\frac1r\patl_r\Big(\frac1r\patl_r\psi\Big)\\
&-\{(A\times B)\times A\}\cdot x\frac1r\patl_r\Big\{\patl_r\phi\cdot
\patl_r\Big(\frac1r\patl_r\psi\Big)\Big\}\\
&+\{(A\times B)\times A\}\cdot x\frac1r\patl_r\Big\{\patl_r\psi\cdot
\patl_r\Big(\frac1r\patl_r\phi\Big)\Big\}\\
&-\{(A\times B)\times A\}\cdot x\frac1r\patl_r\psi\cdot
\frac1r\patl_r\Big(\frac1r\patl_r\phi\Big)\\
&-4(A\cdot B)(B\cdot x)\frac1r\patl_r\psi\cdot
\frac1r\patl_r\Big(\frac1r\patl_r\psi\Big)\\
&-2\{(A\times B)\times B\}\cdot x\frac1r\patl_r\psi\cdot
\frac1r\patl_r\Big(\frac1r\patl_r\psi\Big)\\
&-2\{(A\cdot B)r^2-(A\cdot x)(B\cdot x)\}(B\cdot x)\frac1r\patl_r\Big\{\frac1r
\patl_r\psi\cdot\frac1r\patl_r\Big(\frac1r\patl_r\psi\Big)\Big\}\\
&-4(A\cdot B)(A\cdot x)\frac1r\patl_r\phi\cdot
\frac1r\patl_r\Big(\frac1r\patl_r\psi\Big)\\
&-2\{(A\cdot B)r^2-(A\cdot x)(B\cdot x)\}(A\cdot x)\frac1r\patl_r\Big\{\frac1r
\patl_r\phi\cdot\frac1r\patl_r\Big(\frac1r\patl_r\psi\Big)\Big\},\\
\end{split}\eeq
where
\[
\Del f=\frac3r\patl_rf+r\patl_r(\frac1r\patl_rf).
\]

Employing \eqref{NS-SR11-phi} and \eqref{NT-SR11-bt}, we obtain that
\[\label{NS-SR11-phi1}\begin{split}
&\{(A\times B)\cdot x\}\frac1r\patl_r\Del\{\phi_t-\nu\Del\phi\}\\
&-4(A\cdot B)(A\cdot x)\frac1r\patl_r\phi\cdot
\frac1r\patl_r\Big(\frac1r\patl_r\phi\Big)\\
&+2\{(A\cdot A)(B\cdot x)-(A\cdot B)(A\cdot x)\}\frac1r\patl_r\phi\cdot
\frac1r\patl_r\Big(\frac1r\patl_r\phi\Big)\\
&-2\{(A\cdot B)r^2-(A\cdot x)(B\cdot x)\}(A\cdot x)\frac1r\patl_r\Big\{\frac1r
\patl_r\phi\cdot\frac1r\patl_r\Big(\frac1r\patl_r\phi\Big)\Big\}\\
&-4(A\cdot B)(B\cdot x)\frac1r\patl_r\psi\cdot
\frac1r\patl_r\Big(\frac1r\patl_r\phi\Big)\\
&-2\{(A\cdot B)r^2-(A\cdot x)(B\cdot x)\}(B\cdot x)\frac1r\patl_r\Big\{\frac1r
\patl_r\psi\cdot\frac1r\patl_r\Big(\frac1r\patl_r\phi\Big)\Big\}\\
&+\{(A\cdot B)(B\cdot x)-(B\cdot B)(A\cdot x)\}\frac1r\patl_r\phi\cdot
\frac1r\patl_r\Big(\frac1r\patl_r\psi\Big)\\
&-\{(A\cdot B)(B\cdot x)-(B\cdot B)(A\cdot x)\}\frac1r\patl_r\Big\{\patl_r\phi\cdot
\patl_r\Big(\frac1r\patl_r\psi\Big)\Big\}\\
&+\{(A\cdot B)(B\cdot x)-(B\cdot B)(A\cdot x)\}\frac1r\patl_r\Big\{\patl_r\psi\cdot
\patl_r\Big(\frac1r\patl_r\phi\Big)\Big\}\\
&+\{(A\cdot B)(B\cdot x)-(B\cdot B)(A\cdot x)\}\frac1r\patl_r\psi\cdot
\frac1r\patl_r\Big(\frac1r\patl_r\phi\Big)\\
&-4|B|^2(B\cdot x)\frac1r\patl_r\psi\cdot
\frac1r\patl_r\Big(\frac1r\patl_r\psi\Big)\\
&-2\{|B|^2r^2-(B\cdot x)^2\}(B\cdot x)\frac1r\patl_r\Big\{\frac1r
\patl_r\psi\cdot\frac1r\patl_r\Big(\frac1r\patl_r\psi\Big)\Big\}\\
&-4|B|^2(A\cdot x)\frac1r\patl_r\phi\cdot
\frac1r\patl_r\Big(\frac1r\patl_r\psi\Big)\\
&-2\{|B|^2r^2-(B\cdot x)^2\}(A\cdot x)\frac1r\patl_r\Big\{\frac1r
\patl_r\phi\cdot\frac1r\patl_r\Big(\frac1r\patl_r\psi\Big)\Big\}\\
\end{split}\]
\beq\label{NS-SR11-phi1}\begin{split}
=&\{(A\times B)\cdot x\}\frac1r\patl_r\Del\{\phi_t-\nu\Del\phi\}\\
&-6(A\cdot B)(A\cdot x)\frac1r\patl_r\phi\cdot
\frac1r\patl_r\Big(\frac1r\patl_r\phi\Big)\\
&-2(A\cdot B)(A\cdot x)r\patl_r\Big\{\frac1r
\patl_r\phi\cdot\frac1r\patl_r\Big(\frac1r\patl_r\phi\Big)\Big\}\\
&-2(B\cdot B)(A\cdot x)\frac1r\patl_r\phi\cdot
\frac1r\patl_r\Big(\frac1r\patl_r\psi\Big)\\
&-(B\cdot B)(A\cdot x)\patl_r\Big\{\patl_r\phi\cdot
\frac1r\patl_r\Big(\frac1r\patl_r\psi\Big)\Big\}\\
&-2(B\cdot B)(A\cdot x)\frac1r\patl_r\psi\cdot
\frac1r\patl_r\Big(\frac1r\patl_r\phi\Big)\\
&-(B\cdot B)(A\cdot x)\patl_r\Big\{\patl_r\psi\cdot
\frac1r\patl_r\Big(\frac1r\patl_r\phi\Big)\Big\}\\
&+2(A\cdot A)(B\cdot x)\frac1r\patl_r\phi\cdot
\frac1r\patl_r\Big(\frac1r\patl_r\phi\Big)\\
&-(A\cdot B)(B\cdot x)\patl_r\Big\{\patl_r\psi\cdot
\frac1r\patl_r\Big(\frac1r\patl_r\phi\Big)\Big\}\\
&-(A\cdot B)(B\cdot x)\patl_r\Big\{\patl_r\phi\cdot
\frac1r\patl_r\Big(\frac1r\patl_r\psi\Big)\Big\}\\
&-4(B\cdot B)(B\cdot x)\frac1r\patl_r\psi\cdot
\frac1r\patl_r\Big(\frac1r\patl_r\psi\Big)\\
&-2\{(B\cdot B)r^2-(B\cdot x)^2\}(B\cdot x)\frac1r\patl_r\Big\{\frac1r
\patl_r\psi\cdot\frac1r\patl_r\Big(\frac1r\patl_r\psi\Big)\Big\}\\
&+2(A\cdot x)^2(B\cdot x)\frac1r\patl_r\Big\{\frac1r
\patl_r\phi\cdot\frac1r\patl_r\Big(\frac1r\patl_r\phi\Big)\Big\}\\
&+2(A\cdot x)(B\cdot x)^2\frac1r\patl_r\Big\{\frac1r
\patl_r\psi\cdot\frac1r\patl_r\Big(\frac1r\patl_r\phi\Big)\Big\}\\
&+2(A\cdot x)(B\cdot x)^2\frac1r\patl_r\Big\{\frac1r
\patl_r\phi\cdot\frac1r\patl_r\Big(\frac1r\patl_r\psi\Big)\Big\}\\
=&0.\\
\end{split}\eeq

Applying \eqref{NS-SR11-psi} and \eqref{NT-SR11-at}, we get that
\[\label{NS-SR11-psi1}\begin{split}
&\{(A\times B)\cdot x\}\frac1r\patl_r\Del\{\psi_t-\nu\Del\psi\}\\
&+4|A|^2(A\cdot x)\frac1r\patl_r\phi\cdot
\frac1r\patl_r\Big(\frac1r\patl_r\phi\Big)\\
&+2\{|A|^2r^2-(A\cdot x)^2\}(A\cdot x)\frac1r\patl_r\Big\{\frac1r
\patl_r\phi\cdot\frac1r\patl_r\Big(\frac1r\patl_r\phi\Big)\Big\}\\
&+4|A|^2(B\cdot x)\frac1r\patl_r\psi\cdot
\frac1r\patl_r\Big(\frac1r\patl_r\phi\Big)\\
&+2\{|A|^2r^2-(A\cdot x)^2\}(B\cdot x)\frac1r\patl_r\Big\{\frac1r
\patl_r\psi\cdot\frac1r\patl_r\Big(\frac1r\patl_r\phi\Big)\Big\}\\
&+\{(A\cdot A)(B\cdot x)-(A\cdot B)(A\cdot x)\}\frac1r\patl_r\phi\cdot
\frac1r\patl_r\Big(\frac1r\patl_r\psi\Big)\\
&+\{(A\cdot A)(B\cdot x)-(A\cdot B)(A\cdot x)\}\frac1r\patl_r\Big\{\patl_r\phi\cdot
\patl_r\Big(\frac1r\patl_r\psi\Big)\Big\}\\
&-\{(A\cdot A)(B\cdot x)-(A\cdot B)(A\cdot x)\}\frac1r\patl_r\Big\{\patl_r\psi\cdot
\patl_r\Big(\frac1r\patl_r\phi\Big)\Big\}\\
&+\{(A\cdot A)(B\cdot x)-(A\cdot B)(A\cdot x)\}\frac1r\patl_r\psi\cdot
\frac1r\patl_r\Big(\frac1r\patl_r\phi\Big)\\
&+4(A\cdot B)(B\cdot x)\frac1r\patl_r\psi\cdot
\frac1r\patl_r\Big(\frac1r\patl_r\psi\Big)\\
&+2\{(A\cdot B)(B\cdot x)-(B\cdot B)(A\cdot x)\}\frac1r\patl_r\psi\cdot
\frac1r\patl_r\Big(\frac1r\patl_r\psi\Big)\\
&+2\{(A\cdot B)r^2-(A\cdot x)(B\cdot x)\}(B\cdot x)\frac1r\patl_r\Big\{\frac1r
\patl_r\psi\cdot\frac1r\patl_r\Big(\frac1r\patl_r\psi\Big)\Big\}\\
&+4(A\cdot B)(A\cdot x)\frac1r\patl_r\phi\cdot
\frac1r\patl_r\Big(\frac1r\patl_r\psi\Big)\\
&+2\{(A\cdot B)r^2-(A\cdot x)(B\cdot x)\}(A\cdot x)\frac1r\patl_r\Big\{\frac1r
\patl_r\phi\cdot\frac1r\patl_r\Big(\frac1r\patl_r\psi\Big)\Big\}\\
\end{split}\]
\beq\label{NS-SR11-psi1}\begin{split}
=&\{(A\times B)\cdot x\}\frac1r\patl_r\Del\{\psi_t-\nu\Del\psi\}\\
&+4(A\cdot A)(A\cdot x)\frac1r\patl_r\phi\cdot
\frac1r\patl_r\Big(\frac1r\patl_r\phi\Big)\\
&+2\{(A\cdot A)r^2-(A\cdot x)^2\}(A\cdot x)\frac1r\patl_r\Big\{\frac1r
\patl_r\phi\cdot\frac1r\patl_r\Big(\frac1r\patl_r\phi\Big)\Big\}\\
&+(A\cdot B)(A\cdot x)\patl_r\Big\{\patl_r\phi\cdot
\frac1r\patl_r\Big(\frac1r\patl_r\psi\Big)\Big\}\\
&+(A\cdot B)(A\cdot x)\patl_r\Big\{\patl_r\psi\cdot
\frac1r\patl_r\Big(\frac1r\patl_r\phi\Big)\Big\}\\
&-2(B\cdot B)(A\cdot x)\frac1r\patl_r\psi\cdot
\frac1r\patl_r\Big(\frac1r\patl_r\psi\Big)\\
&+2(A\cdot A)(B\cdot x)\frac1r\patl_r\psi\cdot
\frac1r\patl_r\Big(\frac1r\patl_r\phi\Big)\\
&+(A\cdot A)(B\cdot x)\patl_r\Big\{\patl_r\psi\cdot
\frac1r\patl_r\Big(\frac1r\patl_r\phi\Big)\Big\}\\
&+2(A\cdot A)(B\cdot x)\frac1r\patl_r\phi\cdot
\frac1r\patl_r\Big(\frac1r\patl_r\psi\Big)\\
&+(A\cdot A)(B\cdot x)\patl_r\Big\{\patl_r\phi\cdot
\frac1r\patl_r\Big(\frac1r\patl_r\psi\Big)\Big\}\\
&+6(A\cdot B)(B\cdot x)\frac1r\patl_r\psi\cdot
\frac1r\patl_r\Big(\frac1r\patl_r\psi\Big)\\
&+2(A\cdot B)(B\cdot x)r\patl_r\Big\{\frac1r\patl_r\psi\cdot
\frac1r\patl_r\Big(\frac1r\patl_r\psi\Big)\Big\}\\
&-2(A\cdot x)^2(B\cdot x)\frac1r\patl_r\Big\{\frac1r
\patl_r\psi\cdot\frac1r\patl_r\Big(\frac1r\patl_r\phi\Big)\Big\}\\
&-2(A\cdot x)^2(B\cdot x)\frac1r\patl_r\Big\{\frac1r
\patl_r\phi\cdot\frac1r\patl_r\Big(\frac1r\patl_r\psi\Big)\Big\}\\
&-2(A\cdot x)(B\cdot x)^2\frac1r\patl_r\Big\{\frac1r
\patl_r\psi\cdot\frac1r\patl_r\Big(\frac1r\patl_r\psi\Big)\Big\}\\
=&0.\\
\end{split}\eeq

\begin{proof}[Proof of Theorem \ref{SR11-Thm}]
Since vectors $A$ and $B$ are linearly independent, they can span a
plane $\Gamma_{AB}=\mbox{span}\{A,B\}$. We select the reflection
transformation $\rho_{ab}$, that $\Gamma_{AB}$ is invariant plane,
such that
\beq\label{11-RefT-p}\begin{split}
&r^2=x\cdot x=\rho_{ab}x\cdot\rho_{ab}x,\\
&A\cdot\rho_{ab}x=A\cdot x,\\
&B\cdot\rho_{ab}x=B\cdot x.\\
\end{split}\eeq
Here $|A|^2=A\cdot A$, $B^{\prime}=B-\frac{A\cdot B}{A\cdot A}A=
(b^{\prime}_1,b^{\prime}_2,b_3^{\prime})$,
\beq\label{11-OT-M}\begin{split}
&M_{ab}=\left(\begin{array}{ccc}
\frac{a_1}{|A|}&\frac{b^{\prime}_1}{|B^{\prime}|}&\frac{m_1}{|A||B|}\\
\frac{a_2}{|A|}&\frac{b^{\prime}_2}{|B^{\prime}|}&\frac{m_2}{|A||B|}\\
\frac{a_3}{|A|}&\frac{b^{\prime}_3}{|B^{\prime}|}&\frac{m_3}{|A||B|}
\end{array}\right),\\
&m_1=a_2b_3-a_3b_2,\;\;m_2=a_3b_1-a_1b_3,
\;\;m_3=a_1b_2-a_2b_1,\\
\end{split}\eeq
\beq\label{11-Rho-AB}\begin{split}
&y=\rho_{ab}x=xM_{ab}\left(\begin{array}{ccc}
1&0&0\\0&1&0\\0&0&-1\end{array}\right)M^t_{ab},\\
\end{split}\eeq
where $M^t_{ab}$ is the adjoint matrix of $M_{ab}$.

Applying the orthogonal transformation $y=\rho_{ab}x$ in the equation
\eqref{NS-SR11-phi1}, we obtain
\beq\label{11-phi1y}\begin{split}
&\{(A\times B)\cdot y\}\frac1r\patl_r\Del\{\phi_t-\nu\Del\phi\}\\
&-6(A\cdot B)(A\cdot x)\frac1r\patl_r\phi\cdot
\frac1r\patl_r\Big(\frac1r\patl_r\phi\Big)\\
&-2(A\cdot B)(A\cdot x)r\patl_r\Big\{\frac1r
\patl_r\phi\cdot\frac1r\patl_r\Big(\frac1r\patl_r\phi\Big)\Big\}\\
&-2(B\cdot B)(A\cdot x)\frac1r\patl_r\phi\cdot
\frac1r\patl_r\Big(\frac1r\patl_r\psi\Big)\\
&-(B\cdot B)(A\cdot x)\patl_r\Big\{\patl_r\phi\cdot
\frac1r\patl_r\Big(\frac1r\patl_r\psi\Big)\Big\}\\
&-2(B\cdot B)(A\cdot x)\frac1r\patl_r\psi\cdot
\frac1r\patl_r\Big(\frac1r\patl_r\phi\Big)\\
&-(B\cdot B)(A\cdot x)\patl_r\Big\{\patl_r\psi\cdot
\frac1r\patl_r\Big(\frac1r\patl_r\phi\Big)\Big\}\\
&+2(A\cdot A)(B\cdot x)\frac1r\patl_r\phi\cdot
\frac1r\patl_r\Big(\frac1r\patl_r\phi\Big)\\
&-(A\cdot B)(B\cdot x)\patl_r\Big\{\patl_r\psi\cdot
\frac1r\patl_r\Big(\frac1r\patl_r\phi\Big)\Big\}\\
&-(A\cdot B)(B\cdot x)\patl_r\Big\{\patl_r\phi\cdot
\frac1r\patl_r\Big(\frac1r\patl_r\psi\Big)\Big\}\\
&-4(B\cdot B)(B\cdot x)\frac1r\patl_r\psi\cdot
\frac1r\patl_r\Big(\frac1r\patl_r\psi\Big)\\
&-2\{(B\cdot B)r^2-(B\cdot x)^2\}(B\cdot x)\frac1r\patl_r\Big\{\frac1r
\patl_r\psi\cdot\frac1r\patl_r\Big(\frac1r\patl_r\psi\Big)\Big\}\\
&+2(A\cdot x)^2(B\cdot x)\frac1r\patl_r\Big\{\frac1r
\patl_r\phi\cdot\frac1r\patl_r\Big(\frac1r\patl_r\phi\Big)\Big\}\\
&+2(A\cdot x)(B\cdot x)^2\frac1r\patl_r\Big\{\frac1r
\patl_r\psi\cdot\frac1r\patl_r\Big(\frac1r\patl_r\phi\Big)\Big\}\\
&+2(A\cdot x)(B\cdot x)^2\frac1r\patl_r\Big\{\frac1r
\patl_r\phi\cdot\frac1r\patl_r\Big(\frac1r\patl_r\psi\Big)\Big\}\\
=&0,\\
\end{split}\eeq
where we have used \eqref{11-RefT-p}.

The equations \eqref{NS-SR11-phi1} \eqref{11-phi1y} imply
\beq\label{11-phi1yx}\begin{split}
\{(A\times B)\cdot(x-\rho_{ab}x)\}\frac1r\patl_r\Del\{\phi_t
-\nu\Del\phi\}=&0.
\end{split}\eeq
Given $r$, thanks $x\in{\mathbb S}^2_r$ is arbitrary, we get
\beq\label{11ph-RAB}\begin{split}
\patl_r\Del\{\phi_t-\nu\Del\phi\}=0.\\
\end{split}\eeq

Similarly using the same arguments as in the proof of \eqref{11ph-RAB},
for example employing the orthogonal transformation $y=\rho_{ab}x$ in
the equation \eqref{NS-SR11-psi1} etc., we derive
\beq\label{11ps-RAB}\begin{split}
\patl_r\Del\{\psi_t-\nu\Del\psi\}=0.\\
\end{split}\eeq

Putting \eqref{11ph-RAB} into \eqref{NS-SR11-phi1}, we have
\beq\label{11ph-RAB1}\begin{split}
&-6(A\cdot B)(A\cdot x)\frac1r\patl_r\phi\cdot
\frac1r\patl_r\Big(\frac1r\patl_r\phi\Big)\\
&-2(A\cdot B)(A\cdot x)r\patl_r\Big\{\frac1r
\patl_r\phi\cdot\frac1r\patl_r\Big(\frac1r\patl_r\phi\Big)\Big\}\\
&-2(B\cdot B)(A\cdot x)\frac1r\patl_r\phi\cdot
\frac1r\patl_r\Big(\frac1r\patl_r\psi\Big)\\
&-(B\cdot B)(A\cdot x)\patl_r\Big\{\patl_r\phi\cdot
\frac1r\patl_r\Big(\frac1r\patl_r\psi\Big)\Big\}\\
&-2(B\cdot B)(A\cdot x)\frac1r\patl_r\psi\cdot
\frac1r\patl_r\Big(\frac1r\patl_r\phi\Big)\\
&-(B\cdot B)(A\cdot x)\patl_r\Big\{\patl_r\psi\cdot
\frac1r\patl_r\Big(\frac1r\patl_r\phi\Big)\Big\}\\
&+2(A\cdot A)(B\cdot x)\frac1r\patl_r\phi\cdot
\frac1r\patl_r\Big(\frac1r\patl_r\phi\Big)\\
&-(A\cdot B)(B\cdot x)\patl_r\Big\{\patl_r\psi\cdot
\frac1r\patl_r\Big(\frac1r\patl_r\phi\Big)\Big\}\\
&-(A\cdot B)(B\cdot x)\patl_r\Big\{\patl_r\phi\cdot
\frac1r\patl_r\Big(\frac1r\patl_r\psi\Big)\Big\}\\
&-4(B\cdot B)(B\cdot x)\frac1r\patl_r\psi\cdot
\frac1r\patl_r\Big(\frac1r\patl_r\psi\Big)\\
&-2\{(B\cdot B)r^2-(B\cdot x)^2\}(B\cdot x)\frac1r\patl_r\Big\{\frac1r
\patl_r\psi\cdot\frac1r\patl_r\Big(\frac1r\patl_r\psi\Big)\Big\}\\
&+2(A\cdot x)^2(B\cdot x)\frac1r\patl_r\Big\{\frac1r
\patl_r\phi\cdot\frac1r\patl_r\Big(\frac1r\patl_r\phi\Big)\Big\}\\
&+2(A\cdot x)(B\cdot x)^2\frac1r\patl_r\Big\{\frac1r
\patl_r\psi\cdot\frac1r\patl_r\Big(\frac1r\patl_r\phi\Big)\Big\}\\
&+2(A\cdot x)(B\cdot x)^2\frac1r\patl_r\Big\{\frac1r
\patl_r\phi\cdot\frac1r\patl_r\Big(\frac1r\patl_r\psi\Big)\Big\}=0.\\
\end{split}\eeq

Putting \eqref{11ps-RAB} into \eqref{NS-SR11-psi1}, we have
\beq\label{11ps-RAB1}\begin{split}
&4(A\cdot A)(A\cdot x)\frac1r\patl_r\phi\cdot
\frac1r\patl_r\Big(\frac1r\patl_r\phi\Big)\\
&+2\{(A\cdot A)r^2-(A\cdot x)^2\}(A\cdot x)\frac1r\patl_r\Big\{\frac1r
\patl_r\phi\cdot\frac1r\patl_r\Big(\frac1r\patl_r\phi\Big)\Big\}\\
&+(A\cdot B)(A\cdot x)\patl_r\Big\{\patl_r\phi\cdot
\frac1r\patl_r\Big(\frac1r\patl_r\psi\Big)\Big\}\\
&+(A\cdot B)(A\cdot x)\patl_r\Big\{\patl_r\psi\cdot
\frac1r\patl_r\Big(\frac1r\patl_r\phi\Big)\Big\}\\
&-2(B\cdot B)(A\cdot x)\frac1r\patl_r\psi\cdot
\frac1r\patl_r\Big(\frac1r\patl_r\psi\Big)\\
&+2(A\cdot A)(B\cdot x)\frac1r\patl_r\psi\cdot
\frac1r\patl_r\Big(\frac1r\patl_r\phi\Big)\\
&+(A\cdot A)(B\cdot x)\patl_r\Big\{\patl_r\psi\cdot
\frac1r\patl_r\Big(\frac1r\patl_r\phi\Big)\Big\}\\
&+2(A\cdot A)(B\cdot x)\frac1r\patl_r\phi\cdot
\frac1r\patl_r\Big(\frac1r\patl_r\psi\Big)\\
&+(A\cdot A)(B\cdot x)\patl_r\Big\{\patl_r\phi\cdot
\frac1r\patl_r\Big(\frac1r\patl_r\psi\Big)\Big\}\\
&+6(A\cdot B)(B\cdot x)\frac1r\patl_r\psi\cdot
\frac1r\patl_r\Big(\frac1r\patl_r\psi\Big)\\
&+2(A\cdot B)(B\cdot x)r\patl_r\Big\{\frac1r\patl_r\psi\cdot
\frac1r\patl_r\Big(\frac1r\patl_r\psi\Big)\Big\}\\
&-2(A\cdot x)^2(B\cdot x)\frac1r\patl_r\Big\{\frac1r
\patl_r\psi\cdot\frac1r\patl_r\Big(\frac1r\patl_r\phi\Big)\Big\}\\
&-2(A\cdot x)^2(B\cdot x)\frac1r\patl_r\Big\{\frac1r
\patl_r\phi\cdot\frac1r\patl_r\Big(\frac1r\patl_r\psi\Big)\Big\}\\
&-2(A\cdot x)(B\cdot x)^2\frac1r\patl_r\Big\{\frac1r
\patl_r\psi\cdot\frac1r\patl_r\Big(\frac1r\patl_r\psi\Big)\Big\}=0.\\
\end{split}\eeq

Observe that $(A\cdot x)(B\cdot x)$ is equal to either the scalar
product of vectors $(B\cdot x)A$ and $x$, or the scalar product of
vectors $(A\cdot x)B$ and $x$. Use this observation, the equations
\eqref{11ph-RAB1} can be rewritten as follows
\beq\label{11ph-RAB2}\begin{split}
&-6(A\cdot B)A\frac1r\patl_r\phi\cdot
\frac1r\patl_r\Big(\frac1r\patl_r\phi\Big)\\
&-2(A\cdot B)Ar\patl_r\Big\{\frac1r
\patl_r\phi\cdot\frac1r\patl_r\Big(\frac1r\patl_r\phi\Big)\Big\}\\
&-2(B\cdot B)A\frac1r\patl_r\phi\cdot
\frac1r\patl_r\Big(\frac1r\patl_r\psi\Big)\\
&-(B\cdot B)A\patl_r\Big\{\patl_r\phi\cdot
\frac1r\patl_r\Big(\frac1r\patl_r\psi\Big)\Big\}\\
&-2(B\cdot B)A\frac1r\patl_r\psi\cdot
\frac1r\patl_r\Big(\frac1r\patl_r\phi\Big)\\
&-(B\cdot B)A\patl_r\Big\{\patl_r\psi\cdot
\frac1r\patl_r\Big(\frac1r\patl_r\phi\Big)\Big\}\\
&+2A\xi(A\cdot x)(B\cdot x)\frac1r\patl_r\Big\{\frac1r
\patl_r\phi\cdot\frac1r\patl_r\Big(\frac1r\patl_r\phi\Big)\Big\}\\
&+2A\eta(B\cdot x)^2\frac1r\patl_r\Big\{\frac1r
\patl_r\psi\cdot\frac1r\patl_r\Big(\frac1r\patl_r\phi\Big)\Big\}\\
&+2A\zeta(B\cdot x)^2\frac1r\patl_r\Big\{\frac1r
\patl_r\phi\cdot\frac1r\patl_r\Big(\frac1r\patl_r\psi\Big)\Big\}\\
&+2(A\cdot A)B\frac1r\patl_r\phi\cdot
\frac1r\patl_r\Big(\frac1r\patl_r\phi\Big)\\
&-(A\cdot B)B\patl_r\Big\{\patl_r\psi\cdot
\frac1r\patl_r\Big(\frac1r\patl_r\phi\Big)\Big\}\\
&-(A\cdot B)B\patl_r\Big\{\patl_r\phi\cdot
\frac1r\patl_r\Big(\frac1r\patl_r\psi\Big)\Big\}\\
&-4(B\cdot B)B\frac1r\patl_r\psi\cdot
\frac1r\patl_r\Big(\frac1r\patl_r\psi\Big)\\
&-2\{(B\cdot B)r^2-(B\cdot x)^2\}B\frac1r\patl_r\Big\{\frac1r
\patl_r\psi\cdot\frac1r\patl_r\Big(\frac1r\patl_r\psi\Big)\Big\}\\
&+2B(1-\xi)(A\cdot x)^2\frac1r\patl_r\Big\{\frac1r
\patl_r\phi\cdot\frac1r\patl_r\Big(\frac1r\patl_r\phi\Big)\Big\}\\
&+2B(1-\eta)(A\cdot x)(B\cdot x)\frac1r\patl_r\Big\{\frac1r
\patl_r\psi\cdot\frac1r\patl_r\Big(\frac1r\patl_r\phi\Big)\Big\}\\
&+2B(1-\zeta)(A\cdot x)(B\cdot x)\frac1r\patl_r\Big\{\frac1r
\patl_r\phi\cdot\frac1r\patl_r\Big(\frac1r\patl_r\psi\Big)\Big\}=0,\\
\end{split}\eeq
where parameters $\xi,\eta,\zeta\in\R$.

Since the vectors $A$ and $B$ are linearly independent, on the left
hand of equation \eqref{11ph-RAB2}, all coefficients of
$A$ and $B$ are zero. Thus we have
\beq\label{11ph-AC}\begin{split}
&-6(A\cdot B)\frac1r\patl_r\phi\cdot
\frac1r\patl_r\Big(\frac1r\patl_r\phi\Big)\\
&-2(A\cdot B)r\patl_r\Big\{\frac1r
\patl_r\phi\cdot\frac1r\patl_r\Big(\frac1r\patl_r\phi\Big)\Big\}\\
&-2(B\cdot B)\frac1r\patl_r\phi\cdot
\frac1r\patl_r\Big(\frac1r\patl_r\psi\Big)\\
&-(B\cdot B)\patl_r\Big\{\patl_r\phi\cdot
\frac1r\patl_r\Big(\frac1r\patl_r\psi\Big)\Big\}\\
&-2(B\cdot B)\frac1r\patl_r\psi\cdot
\frac1r\patl_r\Big(\frac1r\patl_r\phi\Big)\\
&-(B\cdot B)\patl_r\Big\{\patl_r\psi\cdot
\frac1r\patl_r\Big(\frac1r\patl_r\phi\Big)\Big\}\\
&+2\xi(A\cdot x)(B\cdot x)\frac1r\patl_r\Big\{\frac1r
\patl_r\phi\cdot\frac1r\patl_r\Big(\frac1r\patl_r\phi\Big)\Big\}\\
&+2\eta(B\cdot x)^2\frac1r\patl_r\Big\{\frac1r
\patl_r\psi\cdot\frac1r\patl_r\Big(\frac1r\patl_r\phi\Big)\Big\}\\
&+2\zeta(B\cdot x)^2\frac1r\patl_r\Big\{\frac1r
\patl_r\phi\cdot\frac1r\patl_r\Big(\frac1r\patl_r\psi\Big)\Big\}=0,\\
\end{split}\eeq
\beq\label{11ph-BC}\begin{split}
&2(A\cdot A)\frac1r\patl_r\phi\cdot
\frac1r\patl_r\Big(\frac1r\patl_r\phi\Big)\\
&-(A\cdot B)\patl_r\Big\{\patl_r\psi\cdot
\frac1r\patl_r\Big(\frac1r\patl_r\phi\Big)\Big\}\\
&-(A\cdot B)\patl_r\Big\{\patl_r\phi\cdot
\frac1r\patl_r\Big(\frac1r\patl_r\psi\Big)\Big\}\\
&-4(B\cdot B)\frac1r\patl_r\psi\cdot
\frac1r\patl_r\Big(\frac1r\patl_r\psi\Big)\\
&-2\{(B\cdot B)r^2-(B\cdot x)^2\}\frac1r\patl_r\Big\{\frac1r
\patl_r\psi\cdot\frac1r\patl_r\Big(\frac1r\patl_r\psi\Big)\Big\}\\
&+2(1-\xi)(A\cdot x)^2\frac1r\patl_r\Big\{\frac1r
\patl_r\phi\cdot\frac1r\patl_r\Big(\frac1r\patl_r\phi\Big)\Big\}\\
&+2(1-\eta)(A\cdot x)(B\cdot x)\frac1r\patl_r\Big\{\frac1r
\patl_r\psi\cdot\frac1r\patl_r\Big(\frac1r\patl_r\phi\Big)\Big\}\\
&+2(1-\zeta)(A\cdot x)(B\cdot x)\frac1r\patl_r\Big\{\frac1r
\patl_r\phi\cdot\frac1r\patl_r\Big(\frac1r\patl_r\psi\Big)\Big\}=0.\\
\end{split}\eeq

Let us select orthogonal transformation $\rho_a$ which rotation axis
is vector $A$, and orthogonal transformation $\rho_b$ which rotation
axis is vector $B$.

Applying the orthogonal transformation $y=\rho_bx$ in \eqref{11ph-AC},
we obtain
\beq\label{11ph-AC-RBy}\begin{split}
&-6(A\cdot B)\frac1r\patl_r\phi\cdot
\frac1r\patl_r\Big(\frac1r\patl_r\phi\Big)\\
&-2(A\cdot B)r\patl_r\Big\{\frac1r
\patl_r\phi\cdot\frac1r\patl_r\Big(\frac1r\patl_r\phi\Big)\Big\}\\
&-2(B\cdot B)\frac1r\patl_r\phi\cdot
\frac1r\patl_r\Big(\frac1r\patl_r\psi\Big)\\
&-(B\cdot B)\patl_r\Big\{\patl_r\phi\cdot
\frac1r\patl_r\Big(\frac1r\patl_r\psi\Big)\Big\}\\
&-2(B\cdot B)\frac1r\patl_r\psi\cdot
\frac1r\patl_r\Big(\frac1r\patl_r\phi\Big)\\
&-(B\cdot B)\patl_r\Big\{\patl_r\psi\cdot
\frac1r\patl_r\Big(\frac1r\patl_r\phi\Big)\Big\}\\
&+2\xi(A\cdot y)(B\cdot y)\frac1r\patl_r\Big\{\frac1r
\patl_r\phi\cdot\frac1r\patl_r\Big(\frac1r\patl_r\phi\Big)\Big\}\\
&+2\eta(B\cdot y)^2\frac1r\patl_r\Big\{\frac1r
\patl_r\psi\cdot\frac1r\patl_r\Big(\frac1r\patl_r\phi\Big)\Big\}\\
&+2\zeta(B\cdot y)^2\frac1r\patl_r\Big\{\frac1r
\patl_r\phi\cdot\frac1r\patl_r\Big(\frac1r\patl_r\psi\Big)\Big\}=0.\\
\end{split}\eeq
Calculating the difference of \eqref{11ph-AC} and \eqref{11ph-AC-RBy},
we get
\beq\label{11ph-AC-RByx}\begin{split}
&\xi\{A\cdot(x-\rho_bx)\}(B\cdot x)\frac1r\patl_r\Big\{\frac1r
\patl_r\phi\cdot\frac1r\patl_r\Big(\frac1r\patl_r\phi\Big)\Big\}=0,\\
\end{split}\eeq
\beq\label{11ph-AC-NR1}\begin{split}
&\xi\patl_r\Big\{\frac1r\patl_r\phi\cdot
\frac1r\patl_r\Big(\frac1r\patl_r\phi\Big)\Big\}=0.\\
\end{split}\eeq
In fact, firstly the equation \eqref{11ph-AC-NR1} is satisfied for any
$x\in\R^3-\{x|x=\rho_bx\}$. Next for $x\in\{x|x=\rho_bx\}$, choosing
$x_n\in\R^3-\{x|x=\rho_bx\}$ such that $x_n\rightarrow x$ as $n
\rightarrow\infty$, we can prove that the equation \eqref{11ph-AC-NR1}
is also satisfied.

Putting \eqref{11ph-AC-NR1} into \eqref{11ph-AC}, and using the
orthogonal transformation $z=\rho_ax$, we derive
\beq\label{11ph-AC-RAz}\begin{split}
&-6(A\cdot B)\frac1r\patl_r\phi\cdot
\frac1r\patl_r\Big(\frac1r\patl_r\phi\Big)\\
&-2(A\cdot B)r\patl_r\Big\{\frac1r
\patl_r\phi\cdot\frac1r\patl_r\Big(\frac1r\patl_r\phi\Big)\Big\}\\
&-2(B\cdot B)\frac1r\patl_r\phi\cdot
\frac1r\patl_r\Big(\frac1r\patl_r\psi\Big)\\
&-(B\cdot B)\patl_r\Big\{\patl_r\phi\cdot
\frac1r\patl_r\Big(\frac1r\patl_r\psi\Big)\Big\}\\
&-2(B\cdot B)\frac1r\patl_r\psi\cdot
\frac1r\patl_r\Big(\frac1r\patl_r\phi\Big)\\
&-(B\cdot B)\patl_r\Big\{\patl_r\psi\cdot
\frac1r\patl_r\Big(\frac1r\patl_r\phi\Big)\Big\}\\
&+2\eta(B\cdot z)^2\frac1r\patl_r\Big\{\frac1r
\patl_r\psi\cdot\frac1r\patl_r\Big(\frac1r\patl_r\phi\Big)\Big\}\\
&+2\zeta(B\cdot z)^2\frac1r\patl_r\Big\{\frac1r
\patl_r\phi\cdot\frac1r\patl_r\Big(\frac1r\patl_r\psi\Big)\Big\}=0.\\
\end{split}\eeq
Calculating the difference of \eqref{11ph-AC} and \eqref{11ph-AC-RAz},
we get
\beq\label{11ph-AC-RAzx}\begin{split}
&\eta\{B\cdot(x-\rho_ax)\}\{B\cdot(x+\rho_ax)\}\frac1r\patl_r\Big\{\frac1r
\patl_r\psi\cdot\frac1r\patl_r\Big(\frac1r\patl_r\phi\Big)\Big\}\\
&+\zeta\{B\cdot(x-\rho_ax)\}\{B\cdot(x+\rho_ax)\}\frac1r\patl_r\Big\{\frac1r
\patl_r\phi\cdot\frac1r\patl_r\Big(\frac1r\patl_r\psi\Big)\Big\}=0,\\
\end{split}\eeq
\beq\label{11ph-AC-NR2}\begin{split}
&\eta\patl_r\Big\{\frac1r\patl_r\psi\cdot
\frac1r\patl_r\Big(\frac1r\patl_r\phi\Big)\Big\}\\
&+\zeta\patl_r\Big\{\frac1r\patl_r\phi\cdot
\frac1r\patl_r\Big(\frac1r\patl_r\psi\Big)\Big\}=0.\\
\end{split}\eeq
In fact, firstly the equation \eqref{11ph-AC-NR2} is satisfied for any
$x\in\R^3-\{x|x=\pm\rho_ax\}$. Next for $x\in\{x|x=\pm\rho_ax\}$,
choosing $x_n\in\R^3-\{x|x=\pm\rho_ax\}$ such that $x_n\rightarrow x$
as $n\rightarrow\infty$, we can prove that the equation
\eqref{11ph-AC-NR2} is also satisfied.

Putting \eqref{11ph-AC-NR1} and \eqref{11ph-AC-NR2} into
\eqref{11ph-AC}, we have
\beq\label{11ph-ACR}\begin{split}
&-6(A\cdot B)\frac1r\patl_r\phi\cdot
\frac1r\patl_r\Big(\frac1r\patl_r\phi\Big)\\
&-2(A\cdot B)r\patl_r\Big\{\frac1r
\patl_r\phi\cdot\frac1r\patl_r\Big(\frac1r\patl_r\phi\Big)\Big\}\\
&-2(B\cdot B)\frac1r\patl_r\phi\cdot
\frac1r\patl_r\Big(\frac1r\patl_r\psi\Big)\\
&-(B\cdot B)\patl_r\Big\{\patl_r\phi\cdot
\frac1r\patl_r\Big(\frac1r\patl_r\psi\Big)\Big\}\\
&-2(B\cdot B)\frac1r\patl_r\psi\cdot
\frac1r\patl_r\Big(\frac1r\patl_r\phi\Big)\\
&-(B\cdot B)\patl_r\Big\{\patl_r\psi\cdot
\frac1r\patl_r\Big(\frac1r\patl_r\phi\Big)\Big\}=0.\\
\end{split}\eeq

Applying the orthogonal transformation $y=\rho_bx$ in \eqref{11ph-BC},
we obtain
\beq\label{11ph-BC-RBy}\begin{split}
&2(A\cdot A)\frac1r\patl_r\phi\cdot
\frac1r\patl_r\Big(\frac1r\patl_r\phi\Big)\\
&-(A\cdot B)\patl_r\Big\{\patl_r\psi\cdot
\frac1r\patl_r\Big(\frac1r\patl_r\phi\Big)\Big\}\\
&-(A\cdot B)\patl_r\Big\{\patl_r\phi\cdot
\frac1r\patl_r\Big(\frac1r\patl_r\psi\Big)\Big\}\\
&-4(B\cdot B)\frac1r\patl_r\psi\cdot
\frac1r\patl_r\Big(\frac1r\patl_r\psi\Big)\\
&-2\{(B\cdot B)r^2-(B\cdot y)^2\}\frac1r\patl_r\Big\{\frac1r
\patl_r\psi\cdot\frac1r\patl_r\Big(\frac1r\patl_r\psi\Big)\Big\}\\
&+2(1-\xi)(A\cdot y)^2\frac1r\patl_r\Big\{\frac1r
\patl_r\phi\cdot\frac1r\patl_r\Big(\frac1r\patl_r\phi\Big)\Big\}\\
&+2(1-\eta)(A\cdot y)(B\cdot y)\frac1r\patl_r\Big\{\frac1r
\patl_r\psi\cdot\frac1r\patl_r\Big(\frac1r\patl_r\phi\Big)\Big\}\\
&+2(1-\zeta)(A\cdot y)(B\cdot y)\frac1r\patl_r\Big\{\frac1r
\patl_r\phi\cdot\frac1r\patl_r\Big(\frac1r\patl_r\psi\Big)\Big\}=0.\\
\end{split}\eeq
Calculating the difference of \eqref{11ph-BC} and \eqref{11ph-BC-RBy},
we get
\beq\label{11ph-BC-RByx}\begin{split}
&(1-\xi)\{A\cdot(x-\rho_bx)\}\{A\cdot(x+\rho_bx)\}\frac1r\patl_r\Big\{\frac1r
\patl_r\phi\cdot\frac1r\patl_r\Big(\frac1r\patl_r\phi\Big)\Big\}\\
&+(1-\eta)\{A\cdot(x-\rho_bx)\}(B\cdot x)\frac1r\patl_r\Big\{\frac1r
\patl_r\psi\cdot\frac1r\patl_r\Big(\frac1r\patl_r\phi\Big)\Big\}\\
&+(1-\zeta)\{A\cdot(x-\rho_bx)\}(B\cdot x)\frac1r\patl_r\Big\{\frac1r
\patl_r\phi\cdot\frac1r\patl_r\Big(\frac1r\patl_r\psi\Big)\Big\}=0.\\
\end{split}\eeq
Using the same arguments as in the proof of \eqref{11ph-AC-NR1}, we
have
\beq\label{11ph-BC-NR12b}\begin{split}
&(1-\xi)\{A\cdot(x+\rho_bx)\}\frac1r\patl_r\Big\{\frac1r
\patl_r\phi\cdot\frac1r\patl_r\Big(\frac1r\patl_r\phi\Big)\Big\}\\
&+(1-\eta)(B\cdot x)\frac1r\patl_r\Big\{\frac1r
\patl_r\psi\cdot\frac1r\patl_r\Big(\frac1r\patl_r\phi\Big)\Big\}\\
&+(1-\zeta)(B\cdot x)\frac1r\patl_r\Big\{\frac1r
\patl_r\phi\cdot\frac1r\patl_r\Big(\frac1r\patl_r\psi\Big)\Big\}=0.\\
\end{split}\eeq

Using the orthogonal transformation $z=\rho_ax$ in
\eqref{11ph-BC-NR12b}, we derive
\beq\label{11ph-BC-NR12az}\begin{split}
&(1-\xi)\{A\cdot(z+\rho_bz)\}\frac1r\patl_r\Big\{\frac1r
\patl_r\phi\cdot\frac1r\patl_r\Big(\frac1r\patl_r\phi\Big)\Big\}\\
&+(1-\eta)(B\cdot z)\frac1r\patl_r\Big\{\frac1r
\patl_r\psi\cdot\frac1r\patl_r\Big(\frac1r\patl_r\phi\Big)\Big\}\\
&+(1-\zeta)(B\cdot z)\frac1r\patl_r\Big\{\frac1r
\patl_r\phi\cdot\frac1r\patl_r\Big(\frac1r\patl_r\psi\Big)\Big\}=0.\\
\end{split}\eeq
Using $\rho_a\rho_b=\rho_b\rho_a$ and solving the difference of
\eqref{11ph-BC-NR12b} and \eqref{11ph-BC-NR12az}, we get
\beq\label{11ph-BC-NR12azy}\begin{split}
&(1-\eta)\{B\cdot(x-\rho_ax)\}\frac1r\patl_r\Big\{\frac1r
\patl_r\psi\cdot\frac1r\patl_r\Big(\frac1r\patl_r\phi\Big)\Big\}\\
&+(1-\zeta)\{B\cdot(x-\rho_ax)\}\frac1r\patl_r\Big\{\frac1r
\patl_r\phi\cdot\frac1r\patl_r\Big(\frac1r\patl_r\psi\Big)\Big\}=0.\\
\end{split}\eeq
Given $r$, thanks $x\in{\mathbb S}^2$ is arbitrary, we obtain
\beq\label{11ph-BC-NR2}\begin{split}
&(1-\eta)\patl_r\Big\{\frac1r
\patl_r\psi\cdot\frac1r\patl_r\Big(\frac1r\patl_r\phi\Big)\Big\}\\
&+(1-\zeta)\patl_r\Big\{\frac1r
\patl_r\phi\cdot\frac1r\patl_r\Big(\frac1r\patl_r\psi\Big)\Big\}=0.\\
\end{split}\eeq
Inserting \eqref{11ph-BC-NR2} into \eqref{11ph-BC-NR12b}, we have
\beq\label{11ph-BC-NR1}\begin{split}
&(1-\xi)\patl_r\Big\{\frac1r\patl_r\phi\cdot
\frac1r\patl_r\Big(\frac1r\patl_r\phi\Big)\Big\}=0.\\
\end{split}\eeq
In fact, firstly the equation \eqref{11ph-BC-NR1} is satisfied for any
$x\in\R^3-\{x|x=-\rho_bx\}$. Next for $x\in\{x|x=-\rho_bx\}$, choosing
$x_n\in\R^3-\{x|x=-\rho_bx\}$ such that $x_n\rightarrow x$ as $n
\rightarrow\infty$, we can prove that the equation \eqref{11ph-BC-NR1}
is also satisfied.

Putting \eqref{11ph-BC-NR1} \eqref{11ph-BC-NR2} into \eqref{11ph-BC},
and employing the orthogonal transformation $z=\rho_ax$, we derive
\beq\label{11ph-BC-RAz}\begin{split}
&2(A\cdot A)\frac1r\patl_r\phi\cdot
\frac1r\patl_r\Big(\frac1r\patl_r\phi\Big)\\
&-(A\cdot B)\patl_r\Big\{\patl_r\psi\cdot
\frac1r\patl_r\Big(\frac1r\patl_r\phi\Big)\Big\}\\
&-(A\cdot B)\patl_r\Big\{\patl_r\phi\cdot
\frac1r\patl_r\Big(\frac1r\patl_r\psi\Big)\Big\}\\
&-4(B\cdot B)\frac1r\patl_r\psi\cdot
\frac1r\patl_r\Big(\frac1r\patl_r\psi\Big)\\
&-2\{(B\cdot B)r^2-(B\cdot z)^2\}\frac1r\patl_r\Big\{\frac1r
\patl_r\psi\cdot\frac1r\patl_r\Big(\frac1r\patl_r\psi\Big)\Big\}=0.\\
\end{split}\eeq
Calculating the difference of \eqref{11ph-BC} and \eqref{11ph-BC-RAz},
we get
\beq\label{11ph-BC-RAzx}\begin{split}
\{B\cdot(x-\rho_ax)\}\{B\cdot(x+\rho_ax)\}\frac1r\patl_r\Big\{\frac1r
\patl_r\psi\cdot\frac1r\patl_r\Big(\frac1r\patl_r\psi\Big)\Big\}=0.\\
\end{split}\eeq
By the same arguments as in the proof of \eqref{11ph-AC-NR2}, we prove
\beq\label{11-NR-2}\begin{split}
\patl_r\Big\{\frac1r\patl_r\psi\cdot
\frac1r\patl_r\Big(\frac1r\patl_r\psi\Big)\Big\}=0.\\
\end{split}\eeq

Inserting \eqref{11ph-BC-NR1} \eqref{11ph-BC-NR2} \eqref{11-NR-2}
into \eqref{11ph-BC}, we have
\beq\label{11ph-BCR}\begin{split}
&2(A\cdot A)\frac1r\patl_r\phi\cdot
\frac1r\patl_r\Big(\frac1r\patl_r\phi\Big)\\
&-(A\cdot B)\patl_r\Big\{\patl_r\psi\cdot
\frac1r\patl_r\Big(\frac1r\patl_r\phi\Big)\Big\}\\
&-(A\cdot B)\patl_r\Big\{\patl_r\phi\cdot
\frac1r\patl_r\Big(\frac1r\patl_r\psi\Big)\Big\}\\
&-4(B\cdot B)\frac1r\patl_r\psi\cdot
\frac1r\patl_r\Big(\frac1r\patl_r\psi\Big)=0.\\
\end{split}\eeq

Putting \eqref{11ph-AC-NR1} \eqref{11ph-BC-NR1} together, we derive
\beq\label{11-NR-1}\begin{split}
\patl_r\Big\{\frac1r\patl_r\phi\cdot\frac1r\patl_r
\Big(\frac1r\patl_r\phi\Big)\Big\}=0.\\
\end{split}\eeq
Putting \eqref{11ph-AC-NR2} \eqref{11ph-BC-NR2} together, we derive
\beq\label{11-NR-12}\begin{split}
&\patl_r\Big\{\frac1r\patl_r\psi\cdot
\frac1r\patl_r\Big(\frac1r\patl_r\phi\Big)\Big\}\\
&+\patl_r\Big\{\frac1r\patl_r\phi\cdot
\frac1r\patl_r\Big(\frac1r\patl_r\psi\Big)\Big\}=0.\\
\end{split}\eeq

Inserting \eqref{11-NR-1} \eqref{11-NR-2} \eqref{11-NR-12} into
\eqref{11ps-RAB1}, we obtain
\beq\label{11ps-RAB12}\begin{split}
&4(A\cdot A)(A\cdot x)\frac1r\patl_r\phi\cdot
\frac1r\patl_r\Big(\frac1r\patl_r\phi\Big)\\
&+(A\cdot B)(A\cdot x)\patl_r\Big\{\patl_r\phi\cdot
\frac1r\patl_r\Big(\frac1r\patl_r\psi\Big)\Big\}\\
&+(A\cdot B)(A\cdot x)\patl_r\Big\{\patl_r\psi\cdot
\frac1r\patl_r\Big(\frac1r\patl_r\phi\Big)\Big\}\\
&-2(B\cdot B)(A\cdot x)\frac1r\patl_r\psi\cdot
\frac1r\patl_r\Big(\frac1r\patl_r\psi\Big)\\
&+2(A\cdot A)(B\cdot x)\frac1r\patl_r\psi\cdot
\frac1r\patl_r\Big(\frac1r\patl_r\phi\Big)\\
&+(A\cdot A)(B\cdot x)\patl_r\Big\{\patl_r\psi\cdot
\frac1r\patl_r\Big(\frac1r\patl_r\phi\Big)\Big\}\\
&+2(A\cdot A)(B\cdot x)\frac1r\patl_r\phi\cdot
\frac1r\patl_r\Big(\frac1r\patl_r\psi\Big)\\
&+(A\cdot A)(B\cdot x)\patl_r\Big\{\patl_r\phi\cdot
\frac1r\patl_r\Big(\frac1r\patl_r\psi\Big)\Big\}\\
&+6(A\cdot B)(B\cdot x)\frac1r\patl_r\psi\cdot
\frac1r\patl_r\Big(\frac1r\patl_r\psi\Big)=0.\\
\end{split}\eeq

Thanks vectors $A$ and $B$ are linearly independent, the coefficients
of $A$ and $B$ in the equation \eqref{11ps-RAB12} are zero.
Therefore we have
\beq\label{11ps-ACR}\begin{split}
&4(A\cdot A)\frac1r\patl_r\phi\cdot
\frac1r\patl_r\Big(\frac1r\patl_r\phi\Big)\\
&+(A\cdot B)\patl_r\Big\{\patl_r\phi\cdot
\frac1r\patl_r\Big(\frac1r\patl_r\psi\Big)\Big\}\\
&+(A\cdot B)\patl_r\Big\{\patl_r\psi\cdot
\frac1r\patl_r\Big(\frac1r\patl_r\phi\Big)\Big\}\\
&-2(B\cdot B)\frac1r\patl_r\psi\cdot
\frac1r\patl_r\Big(\frac1r\patl_r\psi\Big)=0,\\
\end{split}\eeq
\beq\label{11ps-BCR}\begin{split}
&2(A\cdot A)\frac1r\patl_r\psi\cdot
\frac1r\patl_r\Big(\frac1r\patl_r\phi\Big)\\
&+(A\cdot A)\patl_r\Big\{\patl_r\psi\cdot
\frac1r\patl_r\Big(\frac1r\patl_r\phi\Big)\Big\}\\
&+2(A\cdot A)\frac1r\patl_r\phi\cdot
\frac1r\patl_r\Big(\frac1r\patl_r\psi\Big)\\
&+(A\cdot A)\patl_r\Big\{\patl_r\phi\cdot
\frac1r\patl_r\Big(\frac1r\patl_r\psi\Big)\Big\}\\
&+6(A\cdot B)\frac1r\patl_r\psi\cdot
\frac1r\patl_r\Big(\frac1r\patl_r\psi\Big)=0.\\
\end{split}\eeq

The equation \eqref{11-NR-1} implies that
\beq\label{11-NR-1ph}\begin{split}
&\frac1r\patl_r\phi=\pm\{f_2r^2+f_1\}^{1/2},\;\;f_1\ge0,\\
\end{split}\eeq
\[\begin{split}
&\patl_r\Big(\frac1r\patl_r\phi\Big)=\pm f_2r\{f_2r^2+f_1\}^{-1/2},\\
\end{split}\]
\[\begin{split}
&\Del\phi=\frac3r\patl_r\phi+r\patl_r\Big(\frac1r\patl_r\phi\Big)\\
=&\pm4\{f_2r^2+f_1\}^{1/2}\mp f_1\{f_2r^2+f_1\}^{-1/2},\\
&\patl_r\Del\phi=\pm4f_2r\{f_2r^2+f_1\}^{-1/2}\pm f_1f_2r\{f_2r^2+f_1\}^{-3/2},\\
&\patl_r\{\patl_r\Del\phi\}
=\pm2f_1f_2\{f_2r^2+f_1\}^{-3/2}\pm3f_1^2f_2\{f_2r^2+f_1\}^{-5/2},\\
&\patl_r^2\{\patl_r\Del\phi\}
=\mp6f_1f_2^2r\{f_2r^2+f_1\}^{-5/2}\mp15f_1^2f_2^2r\{f_2r^2+f_1\}^{-7/2},\\
\end{split}\]
\beq\label{11-NR-1ph-L}\begin{split}
&\Del\{\patl_r\Del\phi\}=\patl_r^2\{\patl_r\Del\phi\}+\frac2r\patl_r\{\patl_r\Del\phi\}\\
=&\mp\frac2rf_1f_2\{f_2r^2+f_1\}^{-3/2}\mp\frac3rf_1^2f_2\{f_2r^2+f_1\}^{-5/2}\\
&\pm\frac{15}rf_1^3f_2\{f_2r^2+f_1\}^{-7/2},\\
\end{split}\eeq
\beq\label{11-NR-1ph-t}\begin{split}
\patl_t\{\patl_r\Del\phi\}
=&\pm2f_{2t}r\{f_2r^2+f_1\}^{-1/2}\\
&\pm\frac32f_1f_{2t}r\{f_2r^2+f_1\}^{-3/2}\mp f_{1t}f_2r\{f_2r^2+f_1\}^{-3/2}\\
&\pm\frac32f_1^2f_{2t}r\{f_2r^2+f_1\}^{-5/2}
\mp\frac32f_1f_{1t}f_2r\{f_2r^2+f_1\}^{-5/2},\\
\end{split}\eeq
where $f_2$ and $f_1$ are any functions of $t$.

Putting \eqref{11-NR-1ph-L}\eqref{11-NR-1ph-t} into equation
\eqref{11ph-RAB}, we derive
\beq\label{11-phRAB-1}\begin{split}
&2f_{2t}r\{f_2r^2+f_1\}^{-1/2}\\
&+\frac32f_1f_{2t}r\{f_2r^2+f_1\}^{-3/2}
-f_{1t}f_2r\{f_2r^2+f_1\}^{-3/2}\\
&+\frac32 f_1^2f_{2t}r\{f_2r^2+f_1\}^{-5/2}
-\frac32 f_1f_{1t}f_2r\{f_2r^2+f_1\}^{-5/2}\\
=&-2\nu\frac1rf_1f_2\{f_2r^2+f_1\}^{-3/2}
-3\nu\frac1rf_1^2f_2\{f_2r^2+f_1\}^{-5/2}\\
&+15\nu\frac1rf_1^3f_2\{f_2r^2+f_1\}^{-7/2}.\\
\end{split}\eeq
Assume that $f_2\not=0$. Let $r\rightarrow\infty$ in equation
\eqref{11-phRAB-1}, we obtain $f_{2t}=0$. Then equation
\eqref{11-phRAB-1} implies that
\beq\label{11-phRAB-2}\begin{split}
&f_{1t}r^3\{f_2r^2+f_1\}^{-3/2}
+\frac32f_{1t}f_1r^3\{f_2r^2+f_1\}^{-5/2}\\
=&2\nu f_1r\{f_2r^2+f_1\}^{-3/2}
+3\nu f_1^2r\{f_2r^2+f_1\}^{-5/2}\\
&-15\nu f_1^3r\{f_2r^2+f_1\}^{-7/2}.\\
\end{split}\eeq
Let $r\rightarrow\infty$ in equation \eqref{11-phRAB-2}, we obtain
$f_{1t}=0$. Then equation \eqref{11-phRAB-2} implies that
\beq\label{11-phRAB-3}\begin{split}
0=2\{f_2r^2+f_1\}^2+3f_1\{f_2r^2+f_1\}-15f_1^2,\;\;\forall r>0.\\
\end{split}\eeq
Thus $f_2=f_1=0$. This is contradictory.

Therefore $f_2=0$ and $\patl_r\phi=f_1r$ where $f_1$ is any function
of $t$.

Similarly, the equation \eqref{11-NR-2} implies that
\beq\label{11-NR-2ps}\begin{split}
\frac1r\patl_r\psi=\pm\{g_2r^2+g_1\}^{1/2},\;\;g_1\ge0.\\
\end{split}\eeq
Putting \eqref{11-NR-2ps} into equation \eqref{11ps-RAB}, we derive
\beq\label{11-psRAB-1}\begin{split}
&2g_{2t}r\{g_2r^2+g_1\}^{-1/2}\\
&+\frac32g_1g_{2t}r\{g_2r^2+g_1\}^{-3/2}
-g_{1t}g_2r\{g_2r^2+g_1\}^{-3/2}\\
&+\frac32 g_1^2g_{2t}r\{g_2r^2+g_1\}^{-5/2}
-\frac32 g_1g_{1t}g_2r\{g_2r^2+g_1\}^{-5/2}\\
=&-2\nu\frac1rg_1g_2\{g_2r^2+g_1\}^{-3/2}
-3\nu\frac1rg_1^2g_2\{g_2r^2+g_1\}^{-5/2}\\
&+15\nu\frac1rg_1^3g_2\{g_2r^2+g_1\}^{-7/2}.\\
\end{split}\eeq
Assume that $g_2\not=0$. Let $r\rightarrow\infty$ in equation
\eqref{11-psRAB-1}, we obtain $g_{2t}=0$. Then equation
\eqref{11-psRAB-1} implies that
\beq\label{11-psRAB-2}\begin{split}
&g_{1t}r^3\{g_2r^2+g_1\}^{-3/2}
+\frac32g_{1t}g_1r^3\{g_2r^2+g_1\}^{-5/2}\\
=&2\nu g_1r\{g_2r^2+g_1\}^{-3/2}
+3\nu g_1^2r\{g_2r^2+g_1\}^{-5/2}\\
&-15\nu g_1^3r\{g_2r^2+g_1\}^{-7/2}.\\
\end{split}\eeq
Let $r\rightarrow\infty$ in equation \eqref{11-psRAB-2}, we obtain
$g_{1t}=0$. Then equation \eqref{11-psRAB-2} implies that
\beq\label{11-psRAB-3}\begin{split}
0=2\{g_2r^2+g_1\}^2+3g_1\{g_2r^2+g_1\}-15g_1^2,\;\;\forall r>0.\\
\end{split}\eeq
Thus $g_2=g_1=0$. This is contradictory.

Therefore $g_2=0$ and $\patl_r\psi=g_1r$ where $g_1$ is any function
of $t$.

Provided $\patl_r\phi=f_1r$ and $\patl_r\psi=g_1r$, then all equations
\eqref{11-NR-1} \eqref{11-NR-2} \eqref{11-NR-12} \eqref{11ph-ACR}
\eqref{11ph-BCR} \eqref{11ps-ACR} \eqref{11ps-BCR} \eqref{11ph-RAB}
and \eqref{11ps-RAB} are satisfied. Therefore the equations
\eqref{NS-SR11-phi1} \eqref{NS-SR11-psi1} are satisfied.

In summary, Theorem \ref{SR11-Thm} is proved.
\end{proof}

\section{(2,2)-Symplectic Representation and \\Radial Symmetry Breaking
in ${\mathbb{R}}^3$}
\setcounter{equation}{0}

In this section, we assume that the velocity vector $u$ holds the
following (2,2)-symplectic representation
\beq\label{Sym-Rep-22-R3}\begin{split}
u(t,x)=&\{(A\times\nab)\times\nab\}\phi(t,x)
+\{(B\times\nab)\times\nab\}\psi(t,x)\\
=&\big(\nab^1_{att},\nab^2_{att},\nab^3_{att}\big)\phi(t,x)
+\big(\nab^1_{btt},\nab^2_{btt},\nab^3_{btt}\big)\psi(t,x),\\
\end{split}\eeq
where vectors $A=(a_1,a_2,a_3)\in\R^3-\{0\}$ and $B=(b_1,b_2,b_3)\in
\R^3-\{0\}$ are linearly independent, $(A\times\nab)\times\nab=
\big(\nab^1_{att},\nab^2_{att},\nab^3_{att}\big)$, $(B\times\nab)
\times\nab=\big(\nab^1_{btt},\nab^2_{btt},\nab^3_{btt}\big)$,
\beq\label{Def-att}\begin{split}
&\nab^1_{att}=(A\cdot\nab)\patl_1-a_1\Del,\\
&\nab^2_{att}=(A\cdot\nab)\patl_2-a_2\Del,\\
&\nab^3_{att}=(A\cdot\nab)\patl_3-a_3\Del,\\
\end{split}\eeq
\beq\label{Def-btt}\begin{split}
&\nab^1_{btt}=(B\cdot\nab)\patl_1-b_1\Del,\\
&\nab^2_{btt}=(B\cdot\nab)\patl_2-b_2\Del,\\
&\nab^3_{btt}=(B\cdot\nab)\patl_3-b_3\Del.\\
\end{split}\eeq

Thanks the following observations
\beq\label{SR22-phi}\begin{split}
(B\times\nab)\cdot u(t,x)&=-\Big(\{(A\times\nab)\times(B\times\nab)\}
\cdot\nab\Big)\phi(t,x)\\
&=-(A\times B)\cdot\nab\Del\phi(t,x),\\
\end{split}\eeq
\beq\label{SR22-psi}\begin{split}
(A\times\nab)\cdot u(t,x)&=\Big(\{(A\times\nab)\times(B\times\nab)\}
\cdot\nab\Big)\psi(t,x)\\
&=(A\times B)\cdot\nab\Del\psi(t,x),\\
\end{split}\eeq
taking scalar product of equation \eqref{NS1} with $B\times\nab$,
we have
\beq\label{NS22-phi}
(A\times B)\cdot\nab\Del\{\phi_t-\nu\Del\phi\}
-(B\times\nab)\cdot\{(u\cdot\nab)u\}=0.
\eeq
And taking scalar product of equation \eqref{NS1} with $A\times\nab$,
we derive
\beq\label{NS22-psi}
(A\times B)\cdot\nab\Del\{\psi_t-\nu\Del\psi\}
+(A\times\nab)\cdot\{(u\cdot\nab)u\}=0.
\eeq

Introduce symbol $\patl^j_{at}$ and $\patl^j_{bt}$ as follows
\beq\label{Def-at-bt}\begin{split}
&\patl^1_{at}=a_2\patl_3-a_3\patl_2,\;\patl^2_{at}=a_3\patl_1-a_1\patl_3,\;
\patl^3_{at}=a_1\patl_2-a_2\patl_1,\\
&\patl^1_{bt}=b_2\patl_3-b_3\patl_2,\;\patl^2_{bt}=b_3\patl_1-b_1\patl_3,\;
\patl^3_{bt}=b_1\patl_2-b_2\patl_1.\\
\end{split}\eeq
Then $A\times\nab=\big(\patl^1_{at},\patl^2_{at},\patl^3_{at}\big)$ and $
B\times\nab=\big(\patl^1_{bt},\patl^2_{bt},\patl^3_{bt}\big)$. Let us
rewrite the nonlinear terms in the equation \eqref{NS22-phi}
\beq\label{NT-Eq22-phi}\begin{split}
&(B\times\nab)\cdot\{(u\cdot\nab)u\}=\patl^j_{bt}u^k\patl_ku^j\\
=&\patl^j_{bt}\big(\nab^k_{att}\phi+\nab^k_{btt}\psi\big)
\patl_k\big(\nab^j_{att}\phi+\nab^j_{btt}\psi\big)\\
=&-\big(\nab^k_{att}\phi+\nab^k_{btt}\psi\big)
\patl_k(A\times B)\cdot\nab\Del\phi\\
&+\big(\patl^j_{bt}\nab^k_{att}\phi+\patl^j_{bt}\nab^k_{btt}\psi\big)
\patl_k\big(\nab^j_{att}\phi+\nab^j_{btt}\psi\big)\\
=&-(A\times B)\cdot\nab\big\{\big(\nab^k_{att}\phi+\nab^k_{btt}\psi
\big)\patl_k\Del\phi\big\}\\
&+\big\{(A\times B)\cdot\nab\nab^k_{att}\phi
+(A\times B)\cdot\nab\nab^k_{btt}\psi\big\}\patl_k\Del\phi\\
&+\big(\patl^j_{bt}\nab^k_{att}\phi+\patl^j_{bt}\nab^k_{btt}\psi\big)
\patl_k\big(\nab^j_{att}\phi+\nab^j_{btt}\psi\big)\\
=&-(A\times B)\cdot\nab\big(\{A\cdot\nab\patl_k\phi\}\patl_k\Del\phi
-\Del\phi\{A\cdot\nab\Del\phi\}\big)\\
&-(A\times B)\cdot\nab\big(\{B\cdot\nab\patl_k\psi\}\patl_k\Del\phi
-\Del\psi\{B\cdot\nab\Del\phi\}\big)\\
&+\nab\Del\phi\cdot(B\times\nab)(A\cdot\nab)^2\phi
+\nab\Del\phi\cdot(B\times\nab)(A\cdot\nab)(B\cdot\nab)\psi\\
&+\nab\Del\psi\cdot(B\times\nab)(B\cdot\nab)^2\psi
+\nab\Del\psi\cdot(B\times\nab)(A\cdot\nab)(B\cdot\nab)\phi,\\
\end{split}\eeq
where
\beq\label{NT-Att-k}\begin{split}
&\{\nab^k_{att}\phi\}\patl_k
=\big\{\{(A\cdot\nab)\patl_k-a_k\Del\}\phi\big\}\patl_k\\
=&\{(A\cdot\nab)\patl_k\phi\}\patl_k-\Del\phi\;\;A\cdot\nab,\\
\end{split}\eeq

\beq\label{NT-Btt-k}\begin{split}
&\{\nab^k_{btt}\psi\}\patl_k
=\big\{\{(B\cdot\nab)\patl_k-b_k\Del\}\psi\big\}\patl_k\\
=&\{(B\cdot\nab)\patl_k\psi\}\patl_k-\Del\psi\;\;B\cdot\nab,\\
\end{split}\eeq

\beq\label{NT22-bt-jk}\begin{split}
&\big(\patl^j_{bt}\nab^k_{att}\phi+\patl^j_{bt}\nab^k_{btt}\psi\big)
\patl_k\big(\nab^j_{att}\phi+\nab^j_{btt}\psi\big)\\
=&\big(\patl^j_{bt}\nab^k_{att}\phi+\patl^j_{bt}\nab^k_{btt}\psi\big)
\big(\{(A\cdot\nab)\patl_j-a_j\Del\}\patl_k\phi
+\{(B\cdot\nab)\patl_j-b_j\Del\}\patl_k\psi\big),\\
\end{split}\eeq

\beq\label{NT22-bt-phh1}\begin{split}
&\patl^j_{bt}\nab^k_{att}\phi\;\;\patl_j(A\cdot\nab)\patl_k\phi\\
=&\{(A\cdot\nab)\patl_k-a_k\Del\}\patl^j_{bt}\phi\;\;\patl_k\patl_j(A\cdot\nab)\phi\\
=&\patl^j_{bt}(A\cdot\nab)\patl_k\phi\;\;\patl_j(A\cdot\nab)\patl_k\phi
-\patl^j_{bt}\Del\phi\;\;\patl_j(A\cdot\nab)(A\cdot\nab)\phi\\
=&\patl_j\Del\phi\;\;\patl^j_{bt}(A\cdot\nab)^2\phi\\
=&\nab\Del\phi\cdot(B\times\nab)(A\cdot\nab)^2\phi,\\
\end{split}\eeq

\beq\label{NT22-bt-phh2}\begin{split}
-\patl^j_{bt}\nab^k_{att}\phi\;\;a_j\Del\patl_k\phi
=-(A\times B)\cdot\nab\nab^k_{att}\phi\;\;\patl_k\Del\phi,\\
\end{split}\eeq

\beq\label{NT22-bt-phs1}\begin{split}
&\patl^j_{bt}\nab^k_{att}\phi\;\;\patl_j(B\cdot\nab)\patl_k\psi\\
=&\{(A\cdot\nab)\patl_k-a_k\Del\}\patl^j_{bt}\phi\;\;\patl_k\patl_j(B\cdot\nab)\psi\\
=&\patl^j_{bt}(A\cdot\nab)\patl_k\phi\;\;\patl_j(B\cdot\nab)\patl_k\psi
-\patl^j_{bt}\Del\phi\;\;\patl_j(A\cdot\nab)(B\cdot\nab)\psi\\
=&-\patl_j(A\cdot\nab)\patl_k\phi\;\;\patl^j_{bt}(B\cdot\nab)\patl_k\psi
+\patl_j\Del\phi\;\;\patl^j_{bt}(A\cdot\nab)(B\cdot\nab)\psi\\
=&-\nab(A\cdot\nab)\patl_k\phi\cdot(B\times\nab)(B\cdot\nab)\patl_k\psi
+\nab\Del\phi\cdot(B\times\nab)(A\cdot\nab)(B\cdot\nab)\psi,\\
\end{split}\eeq

\beq\label{NT22-bt-phs2}\begin{split}
&-\patl^j_{bt}\nab^k_{att}\phi\;\; b_j\Del\patl_k\psi=0,\\
\end{split}\eeq

\beq\label{NT22-bt-psh1}\begin{split}
&\patl^j_{bt}\nab^k_{btt}\psi\;\;\patl_j(A\cdot\nab)\patl_k\phi\\
=&\patl^j_{bt}\{(B\cdot\nab)\patl_k-b_k\Del\}\psi\;\;\patl_j(A\cdot\nab)\patl_k\phi\\
=&\patl^j_{bt}(B\cdot\nab)\patl_k\psi\;\;\patl_j(A\cdot\nab)\patl_k\phi
-\patl^j_{bt}\Del\psi\;\;\patl_j(A\cdot\nab)(B\cdot\nab)\phi\\
=&\patl^j_{bt}(B\cdot\nab)\patl_k\psi\;\;\patl_j(A\cdot\nab)\patl_k\phi
+\patl_j\Del\psi\;\;\patl^j_{bt}(A\cdot\nab)(B\cdot\nab)\phi\\
=&(B\times\nab)(B\cdot\nab)\patl_k\psi\cdot\nab(A\cdot\nab)\patl_k\phi
+\nab\Del\psi\cdot(B\times\nab)(A\cdot\nab)(B\cdot\nab)\phi,\\
\end{split}\eeq

\beq\label{NT22-bt-psh2}\begin{split}
-\patl^j_{bt}\nab^k_{btt}\psi\;\; a_j\Del\patl_k\phi
=-(A\times B)\cdot\nab\nab^k_{btt}\psi\;\;\patl_k\Del\phi,\\
\end{split}\eeq

\beq\label{NT22-bt-pss1}\begin{split}
&\patl^j_{bt}\nab^k_{btt}\psi\;\;\patl_j(B\cdot\nab)\patl_k\psi\\
=&\patl^j_{bt}\{(B\cdot\nab)\patl_k-b_k\Del\}\psi\;\;\patl_j(B\cdot\nab)\patl_k\psi\\
=&\patl^j_{bt}(B\cdot\nab)\patl_k\psi\;\;\patl_j(B\cdot\nab)\patl_k\psi
-\patl^j_{bt}\Del\psi\;\;\patl_j(B\cdot\nab)b_k\patl_k\psi\\
=&-\patl^j_{bt}\Del\psi\;\;\patl_j(B\cdot\nab)^2\psi\\
=&\patl_j\Del\psi\;\;\patl^j_{bt}(B\cdot\nab)^2\psi
=\nab\Del\psi\cdot(B\times\nab)(B\cdot\nab)^2\psi,\\
\end{split}\eeq

\beq\label{NT22-bt-pss2}\begin{split}
&-\patl^j_{bt}\nab^k_{btt}\psi\;\;b_j\Del\patl_k\psi=0.\\
\end{split}\eeq

Similarly we rewrite the nonlinear terms in the equation
\eqref{NS22-psi}
\beq\label{NT-Eq22-psi}\begin{split}
&(A\times\nab)\cdot\{(u\cdot\nab)u\}=\patl^j_{at}u^k\patl_ku^j\\
=&\patl^j_{at}\big(\nab^k_{att}\phi+\nab^k_{btt}\psi\big)
\patl_k\big(\nab^j_{att}\phi+\nab^j_{btt}\psi\big)\\
=&\big(\nab^k_{att}\phi+\nab^k_{btt}\psi\big)
\patl_k(A\times B)\cdot\nab\Del\psi\\
&+\big(\patl^j_{at}\nab^k_{att}\phi+\patl^j_{at}\nab^k_{btt}\psi\big)
\patl_k\big(\nab^j_{att}\phi+\nab^j_{btt}\psi\big)\\
=&(A\times B)\cdot\nab\big\{\big(\nab^k_{att}\phi+\nab^k_{btt}\psi
\big)\patl_k\Del\psi\big\}\\
&-\big\{(A\times B)\cdot\nab\nab^k_{att}\phi+(A\times B)\cdot\nab
\nab^k_{btt}\psi\big\}\patl_k\Del\psi\\
&+\big(\patl^j_{at}\nab^k_{att}\phi+\patl^j_{at}\nab^k_{btt}\psi\big)
\patl_k\big(\nab^j_{att}\phi+\nab^j_{btt}\psi\big)\\
=&(A\times B)\cdot\nab\big(\{A\cdot\nab\patl_k\phi\}\patl_k\Del\psi
-\Del\phi\{A\cdot\nab\Del\psi\}\big)\\
&+(A\times B)\cdot\nab\big(\{B\cdot\nab\patl_k\psi\}\patl_k\Del\psi
-\Del\psi\{B\cdot\nab\Del\psi\}\big)\\
&+\nab\Del\phi\cdot(A\times\nab)(A\cdot\nab)^2\phi
+\nab\Del\phi\cdot(A\times\nab)(A\cdot\nab)(B\cdot\nab)\psi\\
&+\nab\Del\psi\cdot(A\times\nab)(B\cdot\nab)^2\psi
+\nab\Del\psi\cdot(A\times\nab)(A\cdot\nab)(B\cdot\nab)\phi,\\
\end{split}\eeq
where
\beq\label{NT22-at-jk}\begin{split}
&\big(\patl^j_{at}\nab^k_{att}\phi+\patl^j_{at}\nab^k_{btt}\psi\big)
\patl_k\big(\nab^j_{att}\phi+\nab^j_{btt}\psi\big)\\
=&\big(\patl^j_{at}\nab^k_{att}\phi+\patl^j_{at}\nab^k_{btt}\psi\big)
\big(\{(A\cdot\nab)\patl_j-a_j\Del\}\patl_k\phi
+\{(B\cdot\nab)\patl_j-b_j\Del\}\patl_k\psi\big),\\
\end{split}\eeq

\beq\label{NT22-at-phh1}\begin{split}
&\patl^j_{at}\nab^k_{att}\phi\;\;\patl_j(A\cdot\nab)\patl_k\phi\\
=&\{(A\cdot\nab)\patl_k-a_k\Del\}\patl^j_{at}\phi\;\;\patl_k\patl_j(A\cdot\nab)\phi\\
=&\patl^j_{at}(A\cdot\nab)\patl_k\phi\;\;\patl_j(A\cdot\nab)\patl_k\phi
-\patl^j_{at}\Del\phi\;\;\patl_j(A\cdot\nab)(A\cdot\nab)\phi\\
=&-\patl^j_{at}\Del\phi\;\;\patl_j(A\cdot\nab)^2\phi\\
=&\patl_j\Del\phi\;\;\patl^j_{at}(A\cdot\nab)^2\phi
=\nab\Del\phi\cdot(A\times\nab)(A\cdot\nab)^2\phi,\\
\end{split}\eeq

\beq\label{NT22-at-phh2}\begin{split}
-\patl^j_{at}\nab^k_{att}\phi\;\; a_j\Del\patl_k\phi=0,\\
\end{split}\eeq

\beq\label{NT22-at-phs1}\begin{split}
&\patl^j_{at}\nab^k_{att}\phi\;\;\patl_j(B\cdot\nab)\patl_k\psi\\
=&\{(A\cdot\nab)\patl_k-a_k\Del\}\patl^j_{at}\phi\;\;\patl_k\patl_j(B\cdot\nab)\psi\\
=&\patl^j_{at}(A\cdot\nab)\patl_k\phi\;\;\patl_j(B\cdot\nab)\patl_k\psi
-\patl^j_{at}\Del\phi\;\;\patl_j(A\cdot\nab)(B\cdot\nab)\psi\\
=&\patl^j_{at}(A\cdot\nab)\patl_k\phi\;\;\patl_j(B\cdot\nab)\patl_k\psi
+\patl_j\Del\phi\;\;\patl^j_{at}(A\cdot\nab)(B\cdot\nab)\psi\\
=&(A\times\nab)(A\cdot\nab)\patl_k\phi\cdot\nab(B\cdot\nab)\patl_k\psi
+\nab\Del\phi\cdot(A\times\nab)(A\cdot\nab)(B\cdot\nab)\psi,\\
\end{split}\eeq

\beq\label{NT22-at-phs2}\begin{split}
&-\patl^j_{at}\nab^k_{att}\phi\;\; b_j\Del\patl_k\psi
=\{(A\times B)\cdot\nab\nab^k_{att}\phi\}\Del\patl_k\psi,\\
\end{split}\eeq

\beq\label{NT22-at-psh1}\begin{split}
&\patl^j_{at}\nab^k_{btt}\psi\;\;\patl_j(A\cdot\nab)\patl_k\phi\\
=&\patl^j_{at}\{(B\cdot\nab)\patl_k-b_k\Del\}\psi\;\;\patl_j(A\cdot\nab)\patl_k\phi\\
=&\patl^j_{at}(B\cdot\nab)\patl_k\psi\;\;\patl_j(A\cdot\nab)\patl_k\phi
-\patl^j_{at}\Del\psi\;\;\patl_j(A\cdot\nab)(B\cdot\nab)\phi\\
=&-\patl_j(B\cdot\nab)\patl_k\psi\;\;\patl^j_{at}(A\cdot\nab)\patl_k\phi
+\patl_j\Del\psi\;\;\patl^j_{at}(A\cdot\nab)(B\cdot\nab)\phi\\
=&-\nab(B\cdot\nab)\patl_k\psi\cdot(A\times\nab)(A\cdot\nab)\patl_k\phi
+\nab\Del\psi\cdot(A\times\nab)(A\cdot\nab)(B\cdot\nab)\phi,\\
\end{split}\eeq

\beq\label{NT22-at-psh2}\begin{split}
-\patl^j_{at}\nab^k_{btt}\psi\;\; a_j\Del\patl_k\phi=0,\\
\end{split}\eeq

\beq\label{NT22-at-pss1}\begin{split}
&\patl^j_{at}\nab^k_{btt}\psi\;\;\patl_j(B\cdot\nab)\patl_k\psi\\
=&\patl^j_{at}\{(B\cdot\nab)\patl_k-b_k\Del\}\psi\;\;\patl_j(B\cdot\nab)\patl_k\psi\\
=&\patl^j_{at}(B\cdot\nab)\patl_k\psi\;\;\patl_j(B\cdot\nab)\patl_k\psi
-\patl^j_{at}\Del\psi\;\;\patl_j(B\cdot\nab)b_k\patl_k\psi\\
=&-\patl^j_{at}\Del\psi\;\;\patl_j(B\cdot\nab)^2\psi\\
=&\patl_j\Del\psi\;\;\patl^j_{at}(B\cdot\nab)^2\psi
=\nab\Del\psi\cdot(A\times\nab)(B\cdot\nab)^2\psi,\\
\end{split}\eeq

\beq\label{NT-at-pss2}\begin{split}
&-\patl^j_{at}\nab^k_{btt}\psi\;\;b_j\Del\patl_k\psi
=\{(A\times B)\cdot\nab\nab^k_{btt}\psi\}\Del\patl_k\psi.\\
\end{split}\eeq

Putting together \eqref{NS22-phi} and \eqref{NT-Eq22-phi}, we derive
\beq\label{NS22-phi1}\begin{split}
&(A\times B)\cdot\nab\Del\{\phi_t-\nu\Del\phi\}\\
&+(A\times B)\cdot\nab\big(\{A\cdot\nab\patl_k\phi\}\patl_k\Del\phi
-\Del\phi\{A\cdot\nab\Del\phi\}\big)\\
&+(A\times B)\cdot\nab\big(\{B\cdot\nab\patl_k\psi\}\patl_k\Del\phi
-\Del\psi\{B\cdot\nab\Del\phi\}\big)\\
&-\nab\Del\phi\cdot(B\times\nab)(A\cdot\nab)^2\phi
-\nab\Del\phi\cdot(B\times\nab)(A\cdot\nab)(B\cdot\nab)\psi\\
&-\nab\Del\psi\cdot(B\times\nab)(B\cdot\nab)^2\psi
-\nab\Del\psi\cdot(B\times\nab)(A\cdot\nab)(B\cdot\nab)\phi=0.\\
\end{split}\eeq
Putting together \eqref{NS22-psi} and \eqref{NT-Eq22-psi}, we derive
\beq\label{NS22-psi1}\begin{split}
&(A\times B)\cdot\nab\Del\{\psi_t-\nu\Del\psi\}\\
&+(A\times B)\cdot\nab\big(\{A\cdot\nab\patl_k\phi\}\patl_k\Del\psi
-\Del\phi\{A\cdot\nab\Del\psi\}\big)\\
&+(A\times B)\cdot\nab\big(\{B\cdot\nab\patl_k\psi\}\patl_k\Del\psi
-\Del\psi\{B\cdot\nab\Del\psi\}\big)\\
&+\nab\Del\phi\cdot(A\times\nab)(A\cdot\nab)^2\phi
+\nab\Del\phi\cdot(A\times\nab)(A\cdot\nab)(B\cdot\nab)\psi\\
&+\nab\Del\psi\cdot(A\times\nab)(B\cdot\nab)^2\psi
+\nab\Del\psi\cdot(A\times\nab)(A\cdot\nab)(B\cdot\nab)\phi=0.\\
\end{split}\eeq

Now we assume that $\phi$ and $\psi$ are radial symmetric functions
with respect to space variable $x\in\R^3$. It is that $\phi(t,x)=
\phi(t,r)$, $\psi(t,x)=\psi(t,r)$  and $r^2=x_1^2+x_2^2+x_3^2$.
Then we have
\beq\label{phE22-NT1}\begin{split}
&(A\times B)\cdot\nab\big(\{A\cdot\nab\patl_k\phi\}\patl_k\Del\phi
-\Del\phi\{A\cdot\nab\Del\phi\}\big)\\
=&(A\times B)\cdot\nab\big(\{A\cdot\nab\frac{x_k}r\patl_r\phi\}
\frac{x_k}r\patl_r\Del\phi
-\{A\cdot x\}\Del\phi\cdot \frac1r\patl_r\Del\phi\big)\\
=&(A\times B)\cdot\nab\big\{(A\cdot x)\big(\{\frac1r\patl_r\phi
+r\patl_r(\frac1r\patl_r\phi)\}\frac1r\patl_r\Del\phi
-\Del\phi\cdot \frac1r\patl_r\Del\phi\big)\big\}\\
=&(A\times B)\cdot x \{A\cdot x\}\frac1r\patl_r\big(\{\frac1r\patl_r\phi
+r\patl_r(\frac1r\patl_r\phi)\}\frac1r\patl_r\Del\phi
-\Del\phi\cdot \frac1r\patl_r\Del\phi\big)\\
=&-(A\times B)\cdot x \{A\cdot x\}\frac1r\patl_r\big\{\frac2r\patl_r\phi
\cdot\frac1r\patl_r\Del\phi\big\}\\
=&-(A\times B)\cdot x \{A\cdot x\}\big\{\frac2r\patl_r\big(\frac1r
\patl_r\phi\big)\cdot\frac1r\patl_r\Del\phi+\frac2r\patl_r\phi
\cdot\frac1r\patl_r\big(\frac1r\patl_r\Del\phi\big)\big\},\\
\end{split}\eeq

\beq\label{phE22-NT2}\begin{split}
&(A\times B)\cdot\nab\big(\{B\cdot\nab\patl_k\psi\}\patl_k\Del\phi
-\Del\psi\{B\cdot\nab\Del\phi\}\big)\\
=&(A\times B)\cdot\nab\big(\{B\cdot\nab x_k\frac1r\patl_r\psi\}x_k\frac1r
\patl_r\Del\phi-\{B\cdot x\}\Del\psi\cdot\frac1r\patl_r\Del\phi\big)\\
=&(A\times B)\cdot\nab\big\{(B\cdot x)\big(\{\frac1r\patl_r\psi
+r\patl_r(\frac1r\patl_r\psi)\}\frac1r\patl_r\Del\phi
-\Del\psi\cdot\frac1r\patl_r\Del\phi\big)\big\}\\
=&(A\times B)\cdot x\{B\cdot x\}\frac1r\patl_r\big(\{\frac1r\patl_r\psi
+r\patl_r(\frac1r\patl_r\psi)\}\frac1r\patl_r\Del\phi
-\Del\psi\cdot\frac1r\patl_r\Del\phi\big)\\
=&-(A\times B)\cdot x\{B\cdot x\}\frac1r\patl_r\big\{\frac2r\patl_r\psi
\cdot\frac1r\patl_r\Del\phi\big\}\\
=&-(A\times B)\cdot x\{B\cdot x\}\big\{\frac2r\patl_r\big(\frac1r
\patl_r\psi\big)\cdot\frac1r\patl_r\Del\phi+\frac2r\patl_r\psi
\cdot\frac1r\patl_r\big(\frac1r\patl_r\Del\phi\big)\big\},\\
\end{split}\eeq

\beq\label{phE22-NT3}\begin{split}
&-\nab\Del\phi\cdot(B\times\nab)(A\cdot\nab)^2\phi\\
=&-x\frac1r\patl_r\Del\phi\cdot(B\times\nab)\{
(A\cdot\nab)(A\cdot x)\frac1r\patl_r\phi\}\\
=&-x\frac1r\patl_r\Del\phi\cdot(B\times\nab)\{A\cdot A\frac1r\patl_r\phi+
(A\cdot x)^2\frac1r\patl_r(\frac1r\patl_r\phi)\}\\
=&2(A\times B)\cdot x(A\cdot x)\frac1r\patl_r\Del\phi\cdot\{
\frac1r\patl_r(\frac1r\patl_r\phi)\},\\
\end{split}\eeq

\beq\label{phE22-NT4}\begin{split}
&-\nab\Del\phi\cdot(B\times\nab)(A\cdot\nab)(B\cdot\nab)\psi\\
=&-x\frac1r\patl_r\Del\phi\cdot(B\times\nab)(A\cdot\nab)(B\cdot x)\frac1r\patl_r\psi\\
=&-x\frac1r\patl_r\Del\phi\cdot(B\times\nab)\{(A\cdot B)\frac1r\patl_r\psi
+(A\cdot x)(B\cdot x)\frac1r\patl_r(\frac1r\patl_r\psi)\}\\
=&(A\times B)\cdot x(B\cdot x)\frac1r\patl_r\Del\phi\cdot\{
\frac1r\patl_r(\frac1r\patl_r\psi)\},\\
\end{split}\eeq

\beq\label{phE22-NT5}\begin{split}
&-\nab\Del\psi\cdot(B\times\nab)(B\cdot\nab)^2\psi\\
=&-x\frac1r\patl_r\Del\psi\cdot(B\times\nab)(B\cdot\nab)(B\cdot x)\frac1r\patl_r\psi\\
=&-x\frac1r\patl_r\Del\psi\cdot(B\times\nab)\{B\cdot B\frac1r\patl_r\psi
+(B\cdot x)^2\frac1r\patl_r(\frac1r\patl_r\psi)\}\\
=&0,\\
\end{split}\eeq

\beq\label{phE22-NT6}\begin{split}
&-\nab\Del\psi\cdot(B\times\nab)(A\cdot\nab)(B\cdot\nab)\phi\\
=&-x\frac1r\patl_r\Del\psi\cdot(B\times\nab)(A\cdot\nab)(B\cdot x)\frac1r\patl_r\phi\\
=&-x\frac1r\patl_r\Del\psi\cdot(B\times\nab)\{(A\cdot B)\frac1r\patl_r\phi
+(A\cdot x)(B\cdot x)\frac1r\patl_r(\frac1r\patl_r\phi)\}\\
=&(A\times B)\cdot x(B\cdot x)\frac1r\patl_r\Del\psi\cdot
\{\frac1r\patl_r(\frac1r\patl_r\phi)\},\\
\end{split}\eeq


\beq\label{psE22-NT1}\begin{split}
&(A\times B)\cdot\nab\big(\{A\cdot\nab\patl_k\phi\}\patl_k\Del\psi
-\Del\phi\{A\cdot\nab\Del\psi\}\big)\\
=&(A\times B)\cdot\nab\big(\{A\cdot\nab x_k\frac1r\patl_r\phi\}
x_k\frac1r\patl_r\Del\psi
-\{A\cdot x\}\Del\phi\cdot\frac1r\patl_r\Del\psi\big)\\
=&(A\times B)\cdot\nab\big\{(A\cdot x)\big(\{\frac1r\patl_r\phi
+r\patl_r(\frac1r\patl_r\phi)\}\frac1r\patl_r\Del\psi
-\Del\phi\cdot\frac1r\patl_r\Del\psi\big)\big\}\\
=&(A\times B)\cdot x \{A\cdot x\}\frac1r\patl_r\big(\{\frac1r\patl_r\phi
+r\patl_r(\frac1r\patl_r\phi)\}\frac1r\patl_r\Del\psi
-\Del\phi\cdot\frac1r\patl_r\Del\psi\big)\\
=&-(A\times B)\cdot x \{A\cdot x\}\frac1r\patl_r\big\{\frac2r\patl_r\phi
\cdot\frac1r\patl_r\Del\psi\big\}\\
=&-(A\times B)\cdot x \{A\cdot x\}\big\{\frac2r\patl_r\big(\frac1r
\patl_r\phi\big)\cdot\frac1r\patl_r\Del\psi+\frac2r\patl_r\phi
\cdot\frac1r\patl_r\big(\frac1r\patl_r\Del\psi\big)\big\},\\
\end{split}\eeq

\beq\label{psE22-NT2}\begin{split}
&(A\times B)\cdot\nab\big(\{B\cdot\nab\patl_k\psi\}\patl_k\Del\psi
-\Del\psi\{B\cdot\nab\Del\psi\}\big)\\
=&(A\times B)\cdot\nab\big(\{B\cdot\nab x_k\frac1r\patl_r\psi\}x_k\frac1r
\patl_r\Del\psi-\{B\cdot x\}\Del\psi\cdot\frac1r\patl_r\Del\psi\big)\\
=&(A\times B)\cdot\nab\big\{(B\cdot x)\big(\{\frac1r\patl_r\psi
+r\patl_r(\frac1r\patl_r\psi)\}\frac1r\patl_r\Del\psi
-\Del\psi\cdot\frac1r\patl_r\Del\psi\big)\big\}\\
=&(A\times B)\cdot x\{B\cdot x\}\frac1r\patl_r\big(\{\frac1r\patl_r\psi
+r\patl_r(\frac1r\patl_r\psi)\}\frac1r\patl_r\Del\psi
-\Del\psi\cdot\frac1r\patl_r\Del\psi\big)\\
=&-(A\times B)\cdot x\{B\cdot x\}\frac1r\patl_r\big\{\frac2r\patl_r\psi
\cdot\frac1r\patl_r\Del\psi\big\}\\
=&-(A\times B)\cdot x\{B\cdot x\}\big\{\frac2r\patl_r\big(\frac1r
\patl_r\psi\big)\cdot\frac1r\patl_r\Del\psi+\frac2r\patl_r\psi
\cdot\frac1r\patl_r\big(\frac1r\patl_r\Del\psi\big)\big\},\\
\end{split}\eeq

\beq\label{psE22-NT3}\begin{split}
&\nab\Del\phi\cdot(A\times\nab)(A\cdot\nab)^2\phi\\
=&x\frac1r\patl_r\Del\phi\cdot(A\times\nab)(A\cdot\nab)(A\cdot x)\frac1r\patl_r\phi\\
=&x\frac1r\patl_r\Del\phi\cdot(A\times\nab)\{(A\cdot A)\frac1r\patl_r\phi
+(A\cdot x)^2\frac1r\patl_r(\frac1r\patl_r\phi)\}\\
=&0,\\
\end{split}\eeq

\beq\label{psE22-NT4}\begin{split}
&\nab\Del\phi\cdot(A\times\nab)(A\cdot\nab)(B\cdot\nab)\psi\\
=&x\frac1r\patl_r\Del\phi\cdot(A\times\nab)(A\cdot\nab)(B\cdot x)\frac1r\patl_r\psi\\
=&x\frac1r\patl_r\Del\phi\cdot(A\times\nab)\{(A\cdot B)\frac1r\patl_r\psi
+(A\cdot x)(B\cdot x)\frac1r\patl_r(\frac1r\patl_r\psi)\}\\
=&(A\times B)\cdot x(A\cdot x)\frac1r\patl_r\Del\phi\cdot\{
\frac1r\patl_r(\frac1r\patl_r\psi)\},\\
\end{split}\eeq

\beq\label{psE22-NT5}\begin{split}
&\nab\Del\psi\cdot(A\times\nab)(B\cdot\nab)^2\psi\\
=&x\frac1r\patl_r\Del\psi\cdot(A\times\nab)(B\cdot\nab)(B\cdot x)\frac1r\patl_r\psi\\
=&x\frac1r\patl_r\Del\psi\cdot(A\times\nab)\{(B\cdot B)\frac1r\patl_r\psi
+(B\cdot x)^2\frac1r\patl_r(\frac1r\patl_r\psi)\}\\
=&2(A\times B)\cdot x(B\cdot x)\frac1r\patl_r\Del\psi\cdot\{
\frac1r\patl_r(\frac1r\patl_r\psi)\},\\
\end{split}\eeq

\beq\label{psE22-NT6}\begin{split}
&\nab\Del\psi\cdot(A\times\nab)(A\cdot\nab)(B\cdot\nab)\phi\\
=&x\frac1r\patl_r\Del\psi\cdot(A\times\nab)(A\cdot\nab)(B\cdot x)\frac1r\patl_r\phi\\
=&x\frac1r\patl_r\Del\psi\cdot(A\times\nab)\{(A\cdot B)\frac1r\patl_r\phi
+(A\cdot x)(B\cdot x)\frac1r\patl_r(\frac1r\patl_r\phi)\}\\
=&(A\times B)\cdot x(A\cdot x)\frac1r\patl_r\Del\psi\cdot\{
\frac1r\patl_r(\frac1r\patl_r\phi)\}.\\
\end{split}\eeq
Here
\[
\Del f=\frac3r\patl_rf+r\patl_r(\frac1r\patl_rf).
\]

Putting together \eqref{NS22-phi1} and \eqref{phE22-NT1}--
\eqref{phE22-NT6}, we derive
\beq\label{NS22-phi2}\begin{split}
&\Del\{\phi_t-\nu\Del\phi\}\\
=&\{A\cdot x\}\big\{\frac2r\patl_r\big(\frac1r
\patl_r\phi\big)\cdot\frac1r\patl_r\Del\phi+\frac2r\patl_r\phi
\cdot\frac1r\patl_r\big(\frac1r\patl_r\Del\phi\big)\big\}\\
&+\{B\cdot x\}\big\{\frac2r\patl_r\big(\frac1r
\patl_r\psi\big)\cdot\frac1r\patl_r\Del\phi+\frac2r\patl_r\psi
\cdot\frac1r\patl_r\big(\frac1r\patl_r\Del\phi\big)\big\}\\
&-2(A\cdot x)\frac1r\patl_r\Del\phi\cdot\{
\frac1r\patl_r(\frac1r\patl_r\phi)\}
-(B\cdot x)\frac1r\patl_r\Del\phi\cdot\{
\frac1r\patl_r(\frac1r\patl_r\psi)\}\\
&-(B\cdot x)\frac1r\patl_r\Del\psi\cdot
\{\frac1r\patl_r(\frac1r\patl_r\phi)\}\\
=&\{A\cdot x\}\frac2r\patl_r\phi\cdot\frac1r\patl_r\big(\frac1r\patl_r
\Del\phi\big)-(B\cdot x)\frac1r\patl_r\Del\psi\cdot
\frac1r\patl_r(\frac1r\patl_r\phi)\\
&+\{B\cdot x\}\big\{\frac1r\patl_r\big(\frac1r
\patl_r\psi\big)\cdot\frac1r\patl_r\Del\phi+\frac2r\patl_r\psi
\cdot\frac1r\patl_r\big(\frac1r\patl_r\Del\phi\big)\big\}\\
=&\Big\{A\frac2r\patl_r\phi\cdot\frac1r\patl_r\big(\frac1r\patl_r
\Del\phi\big)-B\frac1r\patl_r\Del\psi\cdot
\frac1r\patl_r(\frac1r\patl_r\phi)\\
&+B\big\{\frac1r\patl_r\big(\frac1r
\patl_r\psi\big)\cdot\frac1r\patl_r\Del\phi+\frac2r\patl_r\psi
\cdot\frac1r\patl_r\big(\frac1r\patl_r\Del\phi\big)\big\}\Big\}\cdot x.\\
\end{split}\eeq

Putting together \eqref{NS22-psi1} and \eqref{psE22-NT1}--
\eqref{psE22-NT6}, we derive
\beq\label{NS22-psi2}\begin{split}
&\Del\{\psi_t-\nu\Del\psi\}\\
=&\{A\cdot x\}\big\{\frac2r\patl_r\big(\frac1r
\patl_r\phi\big)\cdot\frac1r\patl_r\Del\psi+\frac2r\patl_r\phi
\cdot\frac1r\patl_r\big(\frac1r\patl_r\Del\psi\big)\big\}\\
&+\{B\cdot x\}\big\{\frac2r\patl_r\big(\frac1r
\patl_r\psi\big)\cdot\frac1r\patl_r\Del\psi+\frac2r\patl_r\psi
\cdot\frac1r\patl_r\big(\frac1r\patl_r\Del\psi\big)\big\}\\
&-(A\cdot x)\frac1r\patl_r\Del\phi\cdot\{
\frac1r\patl_r(\frac1r\patl_r\psi)\}\\
&-2(B\cdot x)\frac1r\patl_r\Del\psi\cdot\{
\frac1r\patl_r(\frac1r\patl_r\psi)\}
-(A\cdot x)\frac1r\patl_r\Del\psi\cdot\{
\frac1r\patl_r(\frac1r\patl_r\phi)\}\\
=&\{A\cdot x\}\big\{\frac1r\patl_r\big(\frac1r\patl_r\phi\big)
\cdot\frac1r\patl_r\Del\psi+\frac2r\patl_r\phi\cdot\frac1r\patl_r
\big(\frac1r\patl_r\Del\psi\big)\big\}\\
&-(A\cdot x)\frac1r\patl_r\Del\phi\cdot\frac1r\patl_r(\frac1r\patl_r\psi)
+\{B\cdot x\}\frac2r\patl_r\psi\cdot\frac1r\patl_r
\big(\frac1r\patl_r\Del\psi\big)\\
=&\Big\{A\big\{\frac1r\patl_r\big(\frac1r\patl_r\phi\big)
\cdot\frac1r\patl_r\Del\psi+\frac2r\patl_r\phi\cdot\frac1r\patl_r
\big(\frac1r\patl_r\Del\psi\big)\big\}\\
&-A\frac1r\patl_r\Del\phi\cdot\frac1r\patl_r(\frac1r\patl_r\psi)
+B\frac2r\patl_r\psi\cdot\frac1r\patl_r
\big(\frac1r\patl_r\Del\psi\big)\Big\}\cdot x.\\
\end{split}\eeq

\begin{proof}[Proof of Theorem \ref{SR22-Thm}]
Let us select orthogonal transformation $\rho$ as follows
\beq\label{SR22-OT}
y=\rho x=x\left(\begin{array}{ccc}
0&0&1\\1&0&0\\0&1&0\\
\end{array}\right).
\eeq
Then $r^2=y\cdot y=\rho x\cdot\rho x=x\cdot x$.

Applying the orthogonal transformation \eqref{SR22-OT} in the equations
\eqref{NS22-phi2} \eqref{NS22-psi2}, we obtain
\beq\label{NS22-phi2y}\begin{split}
&\Del\{\phi_t-\nu\Del\phi\}\\
=&\Big\{A\frac2r\patl_r\phi\cdot\frac1r\patl_r\big(\frac1r\patl_r
\Del\phi\big)-B\frac1r\patl_r\Del\psi\cdot
\frac1r\patl_r(\frac1r\patl_r\phi)\\
&+B\big\{\frac1r\patl_r\big(\frac1r
\patl_r\psi\big)\cdot\frac1r\patl_r\Del\phi+\frac2r\patl_r\psi
\cdot\frac1r\patl_r\big(\frac1r\patl_r\Del\phi\big)\big\}\Big\}\cdot y,\\
\end{split}\eeq
\beq\label{NS22-psi2y}\begin{split}
&\Del\{\psi_t-\nu\Del\psi\}\\
=&\Big\{A\big\{\frac1r\patl_r\big(\frac1r\patl_r\phi\big)
\cdot\frac1r\patl_r\Del\psi+\frac2r\patl_r\phi\cdot\frac1r\patl_r
\big(\frac1r\patl_r\Del\psi\big)\big\}\\
&-A\frac1r\patl_r\Del\phi\cdot\frac1r\patl_r(\frac1r\patl_r\psi)
+B\frac2r\patl_r\psi\cdot\frac1r\patl_r
\big(\frac1r\patl_r\Del\psi\big)\Big\}\cdot y.\\
\end{split}\eeq
Employing the equations \eqref{NS22-phi2} \eqref{NS22-phi2y}, we get
\beq\label{22-ph2xy}\begin{split}
0=&\Big\{A\frac2r\patl_r\phi\cdot\frac1r\patl_r\big(\frac1r\patl_r
\Del\phi\big)-B\frac1r\patl_r\Del\psi\cdot
\frac1r\patl_r(\frac1r\patl_r\phi)\\
&+B\big\{\frac1r\patl_r\big(\frac1r\patl_r\psi\big)\cdot\frac1r\patl_r
\Del\phi+\frac2r\patl_r\psi\cdot\frac1r\patl_r\big(\frac1r\patl_r
\Del\phi\big)\big\}\Big\}\cdot(\rho x-x).\\
\end{split}\eeq
Similarly using the equations \eqref{NS22-psi2} \eqref{NS22-psi2y},
we derive
\beq\label{22-ps2xy}\begin{split}
0=&\Big\{A\big\{\frac1r\patl_r\big(\frac1r\patl_r\phi\big)
\cdot\frac1r\patl_r\Del\psi+\frac2r\patl_r\phi\cdot\frac1r\patl_r
\big(\frac1r\patl_r\Del\psi\big)\big\}\\
&-A\frac1r\patl_r\Del\phi\cdot\frac1r\patl_r(\frac1r\patl_r\psi)
+B\frac2r\patl_r\psi\cdot\frac1r\patl_r
\big(\frac1r\patl_r\Del\psi\big)\Big\}\cdot(\rho x-x).\\
\end{split}\eeq
Given $r$, thanks $x\in{\mathbb S}^2_r$ is arbitrary, the
equations \eqref{22-ph2xy} \eqref{22-ps2xy} imply that
\beq\label{NS22-phi2nr}\begin{split}
&A\frac2r\patl_r\phi\cdot\frac1r\patl_r\big(\frac1r\patl_r
\Del\phi\big)-B\frac1r\patl_r\Del\psi\cdot
\frac1r\patl_r(\frac1r\patl_r\phi)\\
&+B\big\{\frac1r\patl_r\big(\frac1r
\patl_r\psi\big)\cdot\frac1r\patl_r\Del\phi+\frac2r\patl_r\psi
\cdot\frac1r\patl_r\big(\frac1r\patl_r\Del\phi\big)\big\}=0,\\
\end{split}\eeq
\beq\label{NS22-psi2nr}\begin{split}
&A\big\{\frac1r\patl_r\big(\frac1r\patl_r\phi\big)
\cdot\frac1r\patl_r\Del\psi+\frac2r\patl_r\phi\cdot\frac1r\patl_r
\big(\frac1r\patl_r\Del\psi\big)\big\}\\
&-A\frac1r\patl_r\Del\phi\cdot\frac1r\patl_r(\frac1r\patl_r\psi)
+B\frac2r\patl_r\psi\cdot\frac1r\patl_r
\big(\frac1r\patl_r\Del\psi\big)=0.\\
\end{split}\eeq

Putting \eqref{NS22-phi2nr} into \eqref{NS22-phi2} and putting
\eqref{NS22-psi2nr} into \eqref{NS22-psi2}, we have
\beq\label{NS22-phi2r}\begin{split}
&\Del\{\phi_t-\nu\Del\phi\}=0,\\
\end{split}\eeq
\beq\label{NS22-psi2r}\begin{split}
&\Del\{\psi_t-\nu\Del\psi\}=0.\\
\end{split}\eeq

Since the vectors $A$ and $B$ are linearly independent, the equation
\eqref{NS22-phi2nr} implies that
\beq\label{22-ph2nrA}\begin{split}
&\patl_r\phi\cdot\patl_r\big(\frac1r\patl_r\Del\phi\big)=0,\\
\end{split}\eeq
\beq\label{22-ph2nrB}\begin{split}
&\patl_r\big(\frac1r\patl_r\psi\big)\cdot\patl_r\Del\phi
+2\patl_r\psi\cdot\patl_r\big(\frac1r\patl_r\Del\phi\big)\\
=&\patl_r\Del\psi\cdot\patl_r(\frac1r\patl_r\phi).\\
\end{split}\eeq
And the equation \eqref{NS22-psi2nr} implies that
\beq\label{22-ps2nrA}\begin{split}
&\patl_r\big(\frac1r\patl_r\phi\big)\cdot\patl_r\Del\psi
+2\patl_r\phi\cdot\patl_r\big(\frac1r\patl_r\Del\psi\big)\\
=&\patl_r\Del\phi\cdot\patl_r(\frac1r\patl_r\psi),\\
\end{split}\eeq
\beq\label{22-ps2nrB}\begin{split}
\patl_r\psi\cdot\patl_r\big(\frac1r\patl_r\Del\psi\big)=0.\\
\end{split}\eeq

If $\patl_r\phi=\patl_r\psi=0$, all equations \eqref{NS22-phi2r},
\eqref{NS22-psi2r}, \eqref{22-ph2nrA}, \eqref{22-ph2nrB},
\eqref{22-ps2nrA} and \eqref{22-ps2nrB} are satisfied. Here velocity
vector $u=0$. This is trivial.

Now assume that at least one of $\patl_r\phi$ and $\patl_r\psi$ is not
zero. Then at least one of the following equations
\beq\label{22-ph2A1}\begin{split}
&\patl_r\big(\frac1r\patl_r\Del\phi\big)=0\\
\end{split}\eeq
and
\beq\label{22-ps2B1}\begin{split}
\patl_r\big(\frac1r\patl_r\Del\psi\big)=0\\
\end{split}\eeq
is satisfied.

The equations \eqref{22-ph2A1} and \eqref{NS22-phi2r} imply that
\beq\label{22-ph22}\begin{split}
&\phi=f_4r^4+(12f_4\nu t+f_2)r^2+f_0(t),\\
\end{split}\eeq
where $f_2$ and $f_4$ are arbitrary constants, $f_0(t)$ is arbitrary
function of $t$.

Similarly the equations \eqref{22-ps2B1} and \eqref{NS22-psi2r} imply
that
\beq\label{22-ps22}\begin{split}
&\psi=g_4r^4+(12g_4\nu t+g_2)r^2+g_0(t),\\
\end{split}\eeq
where $g_2$ and $g_4$ are arbitrary constants, $g_0(t)$ is arbitrary
function of $t$.

The equations \eqref{22-ph2nrB} and \eqref{22-ps2nrA} imply
\beq\label{22-phs-c}\begin{split}
f_2g_4=f_4g_2.
\end{split}\eeq

It is obvious that
\[
\int_{\R^3}|(A\times\nab)\times\nab\phi|^2dx=\infty,
\]
\[
\int_{\R^3}|(B\times\nab)\times\nab\psi|^2dx=\infty,
\]
\[
\int_{\R^3}|u|^2dx=\int_{\R^3}|(A\times\nab)\times\nab\phi+
(B\times\nab)\times\nab\psi|^2dx=\infty,
\]
where $\phi$ and $\psi$ are defined by \eqref{22-ph22} \eqref{22-ps22}
respectively.

In summary, the equations  \eqref{NS22-phi2} \eqref{NS22-psi2} are
only satisfied by $(\phi,\psi)$ defined in \eqref{22-ph22}
\eqref{22-ps22} \eqref{22-phs-c}.

On the other hand, provided that at least one of \eqref{22-ph22},
\eqref{22-ps22} and \eqref{22-phs-c} is not satisfied, then the
equations \eqref{NS22-phi2} \eqref{NS22-psi2} can not be satisfied
by any radial symmetry functions $\phi$ and $\psi$.

In summary, Theorem \ref{SR22-Thm} is proved.
\end{proof}


\section*{Acknowledgments}
This work is supported by National Natural Science Foundation of China--NSF,
Grant No.11971068 and No.11971077.



\begin{thebibliography}{99}

\bibitem{Bac75}
A. V. B$\ddot{a}$cklund. Einiges $\ddot{U}$ber Curven und
Fl$\ddot{a}$chentransformationen.
\newblock {\em Lund Universitets Arsskrift }
\newblock 10(1875), 1--12.

\bibitem{BoP08}
J. Bourgain and N. Pavlovic. Ill-posedness of the Navier-Stokes
equations in a critical spaces in 3D.
\newblock {\em J. Funct. Anal.}
\newblock 255(2008), 2233--2247.

\bibitem{CKN82}
L. Caffarelli, R. Kohn and L. Nirenberg. Partial regularity of suitable
weak solutions of the Navier-Stokes equations.
\newblock {\em Comm. Pure Appl. Math.}
\newblock 35 (1982), 771--837.

\bibitem{Cal90}
C. P. Calder$\acute{o}$n. Existence of weak solutions for the Navier-
Stokes equations with initial data in $L^p$.
\newblock {\em Trans. Amer. Math. Soc.}
\newblock 318(1982), No.1, 179--200.

\bibitem{Can97}
M. Cannone. A generalization of a theorem by Kato on Navier-Stokes
equations.
\newblock {\em Rev. Mat. Iberoamericana}
\newblock 13(1997), 515--541.

\bibitem{Can03}
M. Cannone. Harmonis analysis tools for solving the incompressible
Navier-Stokes equations.
\newblock {\em Handbook of Mathematical Fluid Dynamics, Vol. 3,
Eds. S. Friedlander and D. Serre,}
\newblock Elsevier, 2003, 161--244.

\bibitem{Car96}
A. Carpio. Large time behavior in incompressible Navier-Stokes
equations.
\newblock {\em SIAM J. Math. Anal.}
\newblock 27(1996), 449--475.

\bibitem{CSTY08}
C.-C. Chen, R. M. Strain, T.-P. Tsai and H.-T. Yau. Lower bound on the
blow-up rate of the axisymmetric Navier-Stokes equations.
\newblock {\em Int. Math. Res. Not. }
\newblock 9(2008), Art. ID rnn016, 31pp.

\bibitem{CSTY09}
C.-C. Chen, R. M. Strain, T.-P. Tsai and H.-T. Yau. Lower bounds on the
blow-up rate of the axisymmetric Navier-Stokes equations II.
\newblock {\em Comm. Part. Diff. Eq. }
\newblock 34(2009), 203--232.

\bibitem{Che99}
J.-Y. Chemin. Th$\acute{e}$or$\grave{e}$mes d'unicit$\acute{e}$ pour
le syst$\grave{e}$me de Navier-Stokes tridimensionnel.
\newblock {\em J. Anal. Math.}
\newblock 77(1999), 27--50.

\bibitem{CMP14}
J. C. Cortissoz, J. A. Montero and C. E. Pinilla. On lower bounds for
possible blow up solutions to the periodic Navier-Stokes equations.
\newblock {\em J. Math. Phys.}
\newblock 55(2014), 033101.

\bibitem{ESS03}
L. Escauriaza, G. Seregin and V. \v{S}ver\'{a}k.  $L_{3,\infty}$-
solutions to the Navier-Stokes equations and backward uniqueness.
\newblock {\em Russian Math. Surveys}
\newblock 58(2003), 211--250.

\bibitem{GKP13}
I. Gallagher, G. S. Koch and F. Planchnon. A profile decomposition
approach to the $L^{\infty}_t(L^3_x)$ Navier-Stokes regularity
criterion.
\newblock {\em Math. Ann. }
\newblock  355(2013), 1527--1559.

\bibitem{Ger08}
P. Germain. The second iterate for the Navier-Stokes equation.
\newblock {\em J. Funct. Anal.}
\newblock 255(2008), 2248--2264.

\bibitem{Gig86}
Y. Giga. Solutions for semilinear parabolic equations in $L^p$ and
regularity of weak solutions of the Navier-Stokes system.
\newblock {\em J. Diff. Eq. }
\newblock  62(1986), 186--212.

\bibitem{GiInM99}
Y. Giga, K. Inui and S. Matsui. On the Cauchy problem for the Navier-
Stokes equations with nondecaying initial data.
\newblock {\em Quad. Mat. }
\newblock 4 (1999), 27--68.

\bibitem{GiM85}
Y. Giga and T. Miyakawa. Solutions in $L^r$ of the Navier-
Stokes initial value problem.
\newblock {\em Arch. Rational Mech. Math.}
\newblock 89(1985), 267--281.

\bibitem{GiM89}
Y. Giga and T. Miyakawa. Navier-Stokes flow in $\R^3$ with measures
as initial vorticity and Morrey spaces.
\newblock {\em Comm. Partial Diff. Eq.}
\newblock 14(1989), 577--618.

\bibitem{HM77}
A. Hasegawa and K. Mima. Stationary spectrum of strong turbulence
in magnetized nonuniform plasma.
\newblock {\em Phys. Rev. Lett.}
\newblock 39 (1977), 205.

\bibitem{HM78}
A. Hasegawa and K. Mima. Pseudo-three-dimensional turbulence
in magnetized nonuniform plasma.
\newblock {\em Phys. Fluids}
\newblock 21 (1978), 87--92.

\bibitem{Hop51}
E. Hopf. Uber die Anfangswertaufgabe f$\ddot{u}$r die hydrodynamischen
Grundgleichungen.
\newblock {\em Math. Nachr. }
\newblock  4(1951), 213--231.

\bibitem{Ka75}
T. Kato. Quasi-linear equations of evolution, with applications to
partial differential equations.
\newblock {\em Springer Lecture Notes in Mathematics 448. }
\newblock pp.25--70, Berlin, Springer, 1975.

\bibitem{Ka84}
T. Kato. Strong $L^p$ solutions of the Navier-Stokes equations in $\R^m
$, with applications to weak solutions.
\newblock {\em Math. Z. }
\newblock 187(1984), 471--480.

\bibitem{KaF62}
T. Kato and H. Fujita. On The nonstationary Navier-Stokes system.
\newblock {\em Rend Sem. Mat. Univ. Padova}
\newblock 32(1962), 243--260.

\bibitem{KaP94}
T. Kato and G. Ponce. The Navier-Stokes equation with weak initial data.
\newblock {\em International Math. Res. Notices}
\newblock 10 (1994), 435--444.

\bibitem{KeK11}
C. E. Kenig and G. S. Koch. An alternativ approach to regularity for
The Navier-Stokes equations in criticle spaces.
\newblock {\em Ann. l'Inst. H. Poincara (C) Non Linear Anal.}
\newblock 28(2011), 159--187.

\bibitem{KNSS09}
G. Koch, N. Nadirashvili, G. A. Seregin and V. $\breve{S}$ver$\acute{a}$k.
Liouville theorems for the Navier-Stokes equations and applications.
\newblock {\em Acta Math. }
\newblock 203(2009), 83--105.

\bibitem{KoT01}
H. Koch and D. Tataru. Well-posedness for The Navier-Stokes equations.
\newblock {\em Adv. Math.}
\newblock 157(2001), 22--35.

\bibitem{La69}
O. A. Ladyzenskaya.
\newblock {\em The Mathematical Theory of Viscous Incompressible
Flows (2nd edition).}
\newblock Gordon and Breach, 1969.

\bibitem{LaS99}
O. A. Ladyzenskaya and G. A. Geregin. On partial regularity of suitable
weak solutions to the three-demensional Navier-Stokes equations.
\newblock {\em J. Math. Fluid Mech.}
\newblock 1(1999), 356--387.

\bibitem{Le34}
J. Leray. Sur le mouvement d'un liquid visqueux emplissant l'espace.
\newblock {\em Acta Math.}
\newblock 63 (1934), 193--248.

\bibitem{LOW18}
K. Li, T. Ozawa and B. Wang. Dynamical behavior for the solutions of
the Navier-Stokes equation.
\newblock {\em Comm. Pure Appl. Anal.}
\newblock 17(4), 2018, 241--257.

\bibitem{LiW19}
K. Li and B. Wang. Blowup criterion for Navier-Stokes equation in
critical Besov space with spatial dimensions $d\ge4$.
\newblock {\em Ann. Inst. Henri Poincare (C) Anal. Nonlinear}
\newblock 36(6), 2019, 1679--1707.

\bibitem{Lin98}
F.-H. Lin. A new proof of the Caffarelli-Kohn-Nirenberg theorem.
\newblock {\em Comm. Pure Appl. Math.}
\newblock 51 (1998), 241--257.

\bibitem{Lun95} A. Lunardi.
\newblock {\em Analytic Semigroups and Optimal Regularity in
Parabolic Problems. }
\newblock Basel, Boston; Berlin: Birkh\"{a}user, 1995.

\bibitem{MiS01}
T. Miyakawa and M. E. Schonbek. On optimal decay rates for weak
solutions to the Navier-Stokes equations in $\R^n$.
\newblock {\em Math. Bohem.}
\newblock 126(2), 2001, 443--455.

\bibitem{NeRS96}
J. Ne\v{c}as, M. Ru\v{z}i\v{c}ka and V. \v{S}ver\'{a}k.
On Leray's self-similar solutions of the Navier-Stokes
equations
\newblock {\em Acta Math.}
\newblock 176(1996), 283--294.

\bibitem{Pla96}
F. Planchon. Global strong solutions in Sobolev or Lebesgue spaces to
the incompressible Navier-Stokes equations in $\R^3$.
\newblock {\em Ann. Inst. H. Poincare (C) Anal. Nonlinear}
\newblock 13(1996), 319--336.

\bibitem{Pro59}
G. Prodi. Un teorema di unicit$\grave{a}$ per le equazioni di
Navier-Stokes.
\newblock {\em Ann. Mat. Pura Appl.}
\newblock 48(1959), 173--182.

\bibitem{RSS12}
J. C. Robinson, W. Sadowski and R. P. Silva. Lower bounds on blow up
solutions of the three dimensional Navier-Stokes equations in
homogeneous Sobolev spaces.
\newblock {\em J. Math. Phys.}
\newblock 53(2012), 115618-1-15.

\bibitem{RogS82}
C. Rogers, W. R. Shadwick. \newblock {\em B$\ddot{a}$cklund
Transformations and Their Applications. }
\newblock Academic Press, New York, 1982.

\bibitem{Sch76}
V. Scheffer. Partial regularity of solutions to the
Navier-Stokes equations.
\newblock {\em Pacific J. Math.}
\newblock 66 (1976), 535--552.

\bibitem{Sch77}
V. Scheffer. Hausdorff measure and the Navier-Stokes equations.
\newblock {\em Comm. Math. Phys.}
\newblock 55 (1977), 97--112.

\bibitem{Sch91}
M. E. Schonbek. Lower bounds of rates of decay for solutions to the
Navier-Stokes equations.
\newblock {\em  J. Amer. Math. Soc.}
\newblock 4(1991), 423--449.

\bibitem{Sch92}
M. E. Schonbek. Asymptotic behavior of solutions to the three-
dimensional Navier-Stokes equations.
\newblock {\em Indiana Univ. Math. J.}
\newblock 41(1992), 809--823.

\bibitem{Sere12}
G. Seregin. A certain necessary condition of potential blow up for
Navier-Stokes equations.
\newblock {\em Comm. Math. Phys.}
\newblock 312(2012), 833--845.

\bibitem{Ser62}
J. Serrin. On the interior regularity of weak solutions of the
Navier-Stokes equations.
\newblock {\em Arch. Rational Mech. Anal.}
\newblock 9 (1962), 187--195.

\bibitem{Ste70}
E. M. Stein.
\newblock {\em Singular Integrals and Differentiability Properties
of Functions.}
\newblock Princeton University Press 1970.

\bibitem{Tay92}
M. Taylor. Analysis on Morrey spaces and applications to Navier-Stokes
equation.
\newblock {\em Comm. Partial Diff. Eq.}
\newblock 17(1992), 1407--1456.

\bibitem{Te01}
R. Temam.
\newblock {\em Navier-Stokes Equations. Theory and numerical analysis.
(Reprint of the 1984 edition).}
\newblock AMS Chelsea Publishing, Providence, RI, 2001.

\bibitem{Tsai98}
T.-P. Tsai. On Leray's self-similar solutions of the Navier-Stokes
equations satisfying local energy estimates.
\newblock {\em Arch. Rational Mech. Anal. }
\newblock 143 (1998), 29--51.

\bibitem{Vas07}
A. Vasseur. A new proof of partial regularity of solutions to the
Navier-Stokes equations.
\newblock {\em Nonlin. Diff. Eq. Appl.}
\newblock 14(2007), 753--785.

\bibitem{Wah86}
W. von Wahl. Regularity of weak solutions of the Navier-Stokes equations.
\newblock {\em Proc. Symp. Pure Appl. Math. }
\newblock  45(1986), 497--503.

\bibitem{Wang15}
B. Wang. Ill-posedness for the Navier-Stokes equation in critical
Besov spaces $\dot{B}^{-1}_{\infty,q}$.
\newblock {\em Adv. in Math.}
\newblock 268(2015), 350--372.

\bibitem{WaW14}
Y. Wang and G. Wu. A unified proof on the partial regularity for suitable
weak solutions of nonstationary and stationary Navier-Stokes equations.
\newblock {\em J. Diff. Eq.}
\newblock 256(2014), 1224--1249.

\bibitem{Wei80}
F. B. Weissler. The Navier-Stokes initial value problem in $L^p$.
\newblock {\em Arch. Rational Mech. Anal.}
\newblock  74(1980), 219--230.

\bibitem{Yon10}
T. Yoneda. Ill-posedness of the 3D Navier-Stokes equations in
generalized Besov space near BMO$^{-1}$.
\newblock {\em J. Funct. Anal.}
\newblock 258(2010), 3376--3387.


\end{thebibliography}
\end{document}